\newtheorem{theorem}{Theorem}[section]
\newtheorem{lemma}[theorem]{Lemma}
\newtheorem{proposition}[theorem]{Proposition}
\newtheorem{corollary}[theorem]{Corollary}
\theoremstyle{remark}
\newtheorem{remark}{Remark}[section]
\numberwithin{equation}{section}
 \DeclareMathOperator\hdim{\dim_H}
\DeclareMathOperator\loc{\rm loc}
 \DeclareMathOperator\supp{\rm supp}
 \DeclareMathOperator{\d_H}{d_{\rm H}}
\def\N{\mathbb{N}}
\def\Z{\mathbb{Z}}
\def\R{\mathbb{R}}
\def\wz{\widetilde}
\def\supp{\mathop\mathrm{\,supp\,}}
\def\loc{{\mathop\mathrm{\,loc\,}}}
\def\q1{\wz q}
\def\Q1{q_1}
\def\loc{{\mathop\mathrm{loc}}}
\newtheorem{thm}{Theorem}[section]
\theoremstyle{definition}
\newtheorem{rem}[thm]{Remark}
\newtheorem{Example}[thm]{Example}
\numberwithin{equation}{section}
\begin{document}
\title{\bf\Large Quantitative Hardy--Littlewood maximal inequalities and Wiener--Stein theorem on p.c.f. self-similar fractals
\footnotetext{\hspace{-0.35cm}
2020 {\it Mathematics Subject Classification}.
{Primary 42B25; Secondary 28A12, 28A25, 31C15.} \endgraf
{\it Key words and phrases.} Self-similar set; Hausdorff content; Hardy--Littlewood maximal operator; Choquet integral;  $L\log L$. \endgraf}}
\author{Long Huang, Jinjun Li\footnote{Corresponding author, E-mail: lijinjun@gzhu.edu.cn
/{\color{red}{January 13, 2026}}}\,\,  and Xiaofeng Wang
}
\date{ }
\maketitle
	
\vspace{-0.7cm}

\begin{center}
\begin{minipage}{13cm}
{\small {\bf Abstract}\quad
Let $K\subset \mathbb{R}^d$ be a post-critically finite (p.c.f.) self-similar  set with Hausdorff dimension $s$, and $\mu$ be a self-similar probability measure supported on $K$. Let $H^{\alpha}_\mu$, $0<\alpha\le s$, be the Hausdorff content  on $K$, and $M_{\mathcal{D}}^\mu $ be the Hardy--Littlewood maximal operator on $K$ associated with its basic cubes $\mathcal{D}$. In this paper, we establish quantitative strong type and weak type Hardy--Littlewood maximal inequalities on $K$ with respect to Hausdorff content  $H^{\alpha}_\mu$ for all $0<\alpha\le s$.
As applications, via the maximal operator $M_{\mathcal{D}}^\mu $, we characterize the Lebesgue--Choquet space $L^p(K,H^{\alpha}_\mu)$ and the Zygmund space $L\log L(K,\mu)$.
To be exact, given $\alpha/s< p\le \infty$, we discover that
\begin{align*}
\text{$f\in L^p(K,H^{\alpha}_\mu)$ if and only if $M_{\mathcal{D}}^\mu f\in L^p(K,H^{\alpha}_\mu)$}
\end{align*}
 and, for $f\in L^1(K,\mu)$ with $K$ satisfying the strong separation condition,
\[
\text{$M_{\mathcal{D}}^\mu f\in L^1(K,\mu)$  if and only if $f\in L\log L(K,\mu)$}.
\]
That is, Wiener's $L\log L$ inequality and its converse
inequality, due to Stein in 1969, are established on fractal sets.
}
\end{minipage}
\end{center}

\vspace{0.2cm}
\maketitle
\begin{center}
	\tableofcontents
\end{center}
\section{Introduction}

\overfullrule=5pt
Let $L_{\loc}^1({{\mathbb R}^d})$ denote
the set of all locally Lebesgue integrable functions on
${{\mathbb R}^d}$. The \emph{Hardy--Littlewood maximal operator}
is defined by setting, for any $f\in L_{\loc}^1({{\mathbb R}^d})$ and $x\in{{\mathbb R}^d}$,
$${\mathcal M}f(x):=\sup_{x\in B}\frac1{\mathcal{L}^d(B)}\int_{B}|f(y)|\,dy,$$
where the supremum is taken over all balls $B$ containing $x$ and $\mathcal{L}^d$  denotes the Lebesgue measure on ${\mathbb R}^d$. Equivalently, the supremum may be taken over all balls centered at $x$, or over all cubes $Q$ containing $x$; see, for example, \cite{swbook}.

As an important and fundamental result in harmonic analysis,
the Hardy--Littlewood maximal operator $\mathcal{M}$ is strong-type $(p,p)$ for $p\in(1, \infty)$, that is, for any given $p\in(1,\infty)$,
there exists a positive constant $C$, depending only on
$d$ and $p$, such that, for any  $f\in L^p(\mathbb{R}^d)$,
\[
     \left[\int_{{{\mathbb R}^d}}[\mathcal{M} f(x)]^p\,dx\right]^{\frac1p} \leq
 C \left[\int_{{{\mathbb R}^d}}|f(x)|^p\,dx\right]^{\frac1p}.
\]
But for $f\in L^1(\mathbb{R}^d)$ the last inequality is not true, i.e., the maximal operator,
as a mapping defined on $L^1(\mathbb{R}^d)$ is not a bounded transformation into $ L^1(\mathbb{R}^d)$. Indeed, for any $f\in L^1(\mathbb{R}^d)$ with $f\neq 0$, then $\mathcal{M} f\notin L^1(\mathbb{R}^d)$.
In this case, the operator $\mathcal{M}$, however, is weak-type $(1,1)$, that is, there exists a positive constant $C$, depending only on $d$, such that, for any $f \in L^1(\mathbb{R}^d)$,
   \[
        \sup_{t\in(0,{\infty})}t \mathcal{L}^d \left(\{ x\in \mathbb{R}^d: \mathcal{M} f(x) >t  \}   \right)
\le C \int_{\mathbb{R}^d}  |f(x)| \; d x.
   \]
These celebrated inequalities were proved by Hardy and Littlewood \cite{hl30} for $d=1$ and by Wiener \cite{w39} for $d\ge 2$. In addition, for $f\in L^1(\mathbb{R}^d)$ with $f\neq 0$, $\mathcal{M}f$ even does not need to be locally integrable.

However, in the seminal paper \cite{w39}, Wiener showed that
$\mathcal{M} f$ is locally integrable if $f$ is in the Zygmund space $L\log L$, i.e.,
$$\int_{\R^d}|f(x)|\log(e+|f(x)|)\,dx<\infty,$$
where $e$ is the Euler number.
More precisely, there exists a positive constant $C$ such that for any given ball $B$,
\begin{align}\label{25eq1}
\int_B\mathcal{M} f(x)\,dx\leq 2\mathcal{L}^d(B)+C\int_{\R^d}|f(x)|\log(e+|f(x)|)\,dx.
\end{align}
In 1969, Stein \cite{s69}  proved that the converse of
Wiener's $L\log L$ inequality \eqref{25eq1} is also true, that is,
if $f$ is integrable and is supported on some finite ball $B$ and $\int_B\mathcal{M} f(x)\,dx<\infty$, then there exists a positive constant $C$ such that
$$\int_{B}|f(x)|\log(e+|f(x)|)\,dx\leq C\int_B\mathcal{M} f(x)\,dx.$$


On another hand, fractals are fascinating
mathematical objects that exhibit irregularity and self-similarity at different scales. Thus, fractals can be employed to approach the true characteristics and states for complex systems. Therefore there has been growing interest in developing analysis on fractals; see, for example, a series of significant works by
Strichartz \cite{Str2,Str3,Str4}.

In this paper, we follow the line of Strichartz and develop analysis related to Hardy--Littlewood maximal operator on p.c.f. self-similar fractal set $K\subset \R^d$. To be specific, this paper
aims to establish quantitative strong type and weak type Hardy--Littlewood maximal inequalities on the p.c.f. self-similar fractal set $K\subset \R^d$ equipped with Hausdorff contents $H^{\alpha}_\mu$. As applications, we characterize Lebesgue--Choquet spaces and Zygmund spaces on given self-similar set $K\subset \R^d$, that is,
Wiener's $L\log L$ inequality and its converse
inequality due to Stein in 1969  for Hardy--Littlewood maximal operator on  $K$ are also obtained.
These results, obtained on this paper, have a wide range of applications, since the symmetric Cantor set, middle-forth Cantor sets, generalized Sierpi\'{n}ski carpet, Sierpi\'{n}ski  gasket and Vicsek set  and so on are examples of the set $K$ under consideration. More details on these examples will be given in Section 2.1.

We shall briefly sketch the main difficulties in this paper:\vspace{-0.1cm}
\begin{enumerate}[(1)]
\item [\textup{(a)}] dyadic decomposition technique,
the most fundamental tool for proving the classical maximal inequalities in Euclidean spaces, cannot be applied to p.c.f. self-similar fractals.
\vspace{-0.1cm}
\item [\textup{(b)}] these quantitative inequalities are built under the Hausdorff content setting, which provides a finer measurement of set size compared to the Lebesgue measure and is just a outer measure rather than a measure; moreover, it coincides with Lebesgue measure as a special case.
\vspace{-0.1cm}
\item [\textup{(c)}] the Choquet integral under consideration is not linear, and it is well known that the linearity of integrals plays a significant and fundamental role, as demonstrated in the proof of the classical Hardy--Littlewood maximal inequalities.
\end{enumerate}\vspace{-0.1cm}

Thus, to prove these desired results, we combine the methods and techniques coming both from harmonic analysis and fractal geometry. For example, to prove the main results, we combined the idea of dyadic decomposition in harmonic analysis with the basic construction cubes in fractal geometry to overcome the above difficulty (a) and, by proving crucial Lemma \ref{zfg}, we tackle the difficulty (c) of lacking linearity.
This paper may shed some light on developing harmonic analysis on fractal set, since Hardy--Littlewood maximal operator is one of the most fundamental operators in harmonic analysis.

To state the main results, we need to introduce some notions.
For $1\le i\le m,$ let $S_i:\R^d\to \R^d$ be contractive similitudes,
\begin{align*}
S_i(x)=r_iR_i(x)+b_i,
\end{align*}
where $0<r_i<1,\ b_i\in \R^d$ and each $R_i$ is an orthogonal transformation. We call $\{S_i\}_{i=1}^m$ an \emph{iterated function system (IFS)}. It is well known that there is a unique non-empty compact set $K\subset \R^d$ such that $K=\bigcup_{i=1}^mS_i(K)$; see \cite{Hutch}. The set $K$ is called the \textit{self-similar set} generated by $\{S_i\}_{i=1}^m$. In this paper, we always assume that $K$ is non-trivial, by which we mean that the contractions do not all share a common fixed point, and thus $K$ is always uncountable. Further, given a probability vector $\textbf{p}=(p_1,\ldots,p_m)$, i.e., $p_i>0$ for $1\le i\le m$ and $\sum_{i=1}^m p_i=1,$ there is a unique Borel probability measure $\mu=\mu_{\textbf{p}}$ on $\R^d$ satisfying
\begin{equation*}
\mu=\sum_{i=1}^m p_i\mu\circ S_i^{-1}.
 \end{equation*}
The measure $\mu$ is supported on $K$ and is called a \textit{self-similar measure}. It is well-known that $\mu$ is continuous under our assumption.

 For any $n\ge 1$, let $\Sigma_n:=\{1,\ldots, m\}^n$ be the set of  all finite sequences
$\textbf{i}=i_{1}\ldots i_{n}$ of length $n$ with
$i_{j}\in\{1,\ldots,m\}$. We define $\Sigma_0=\emptyset$ by convention. Let
\[
\Sigma^*:=\bigcup_{n\ge 1}\Sigma_n
\]
denote the set of all finite sequences. For $\textbf{i}=i_{1}\ldots i_{n}\in \Sigma_n$, define $S_{\textbf{i}}=S_{i_1}\circ S_{i_2}\circ \ldots \circ S_{i_n}$ and $K_{\textbf{i}}=S_{\textbf{i}}(K)$. Of course, let $K_\emptyset=K.$ Write
\[
\mathcal{D}:=\bigcup_{n=0}^\infty\{K_{\textbf{i}}:\textbf{i}\in \Sigma_n \}.
 \]
The elements in $\mathcal{D}$ are called the \emph{basic cubes} of $K.$  For any $x\in K$ and integer $n\ge 1$, there exists a (not necessarily unique)  $\textbf{i}\in \Sigma_n$ such that $K_{\textbf{i}}$ contains $x$, and this basic cube is denoted by $K_{\textbf{i}}(x)$; see Section 2.1. Sometimes we replace  $K_{\textbf{i}}(x)$ by $K_{\textbf{i}_n}(x)$ to  emphasize the length of $\textbf{i}$.

For $f\in L^1(K,\mu),$ the \emph{Hardy--Littlewood maximal operator} $M_{\mathcal{D}}^\mu$ associated with $\mu$ is defined by, for any $x\in K$,
\[
M_{\mathcal{D}}^\mu f(x):=\sup_{n\in \N}\frac{1}{\mu(K_{\textbf{i}_n}(x))}\int_{K_{\textbf{i}_n}(x)}|f(y)|d\mu(y).
\]
Throughout the paper, $s$ always denotes the Hausdorff dimension of $K$ and a nonnegative function always refers to a $\mu$-measurable nonnegative function.
 For $E\subset K$ and $0<\alpha\le s$, the \emph{$\alpha$-dimensional Hausdorff content} of $E$ associated with $\mu$ is defined by
\begin{align}\label{hc}
H^\alpha_\mu(E):=\inf\left\{\sum_{j}\mu(I_j)^{\alpha/s}:\ E\subset \bigcup_{j}I_j,\ I_j\in \mathcal{D}\right\}.
\end{align}
As usual, the \emph{Choquet integral} of a nonnegative function $f$ with respect to $H^\alpha_\mu$ is defined by
\[
\int_K f(x)d H^\alpha_\mu:=\int_0^\infty H^\alpha_\mu(\{x\in K: f(x)>t\})dt.
\]
For  $p\in(0,{\infty})$, the \emph{$p$-Choquet
integral of function $f$ with respect to  $H^\alpha_\mu$}
is defined by
\begin{align}\label{lp}
\begin{split}
\|f\|_{L^p(K,H^\alpha_\mu)}^p&=\int_{K}|f(x)|^p\,dH^\alpha_\mu\\
&:=p\int_{0}^{\infty}{t}^{p-1}
H^\alpha_\mu\left(\left\{x\in K:\ |f(x)|>t\right\}\right)
dt.
\end{split}
\end{align}
For more details on Hausdorff contents and Choquet integrals, we refer to Adams \cite{a98} and Tang \cite{t11}.

In what follows, for any $p\in(0,{\infty})$, we always denote
by $L^p(K,H^\alpha_\mu)$ the \emph{Lebesgue--Choquet space} of all functions
$f$ on $K$ such that $\|f\|_{L^p(K,H^\alpha_\mu)}^p$ as in \eqref{lp}
are finite. We point out that, unlike Riemann or Lebesgue integrals,
the Choquet integral is a nonlinear integral and well defined for all nonnegative functions, without an assumption of measurability. But in  what follows we will only restrict ourselves to measurable functions. Furthermore, when limiting functions to measurable on Borel set of $K$, then the Lebesgue--Choquet space $L^p(K,H^s_\mu)$ coincides with the Lebesgue space $L^p(K,\mu)$ by
Lemma \ref{eq} below.

Now we can state our main results.
In what follows, $K$ and $\mu$ always, respectively, denote the self-similar set generated by $\{S_i\}_{i=1}^m$  and the self-similar measure. Let $
H^\alpha_\mu(E)$, $0<\alpha\le s$, denote
the $\alpha$-dimensional Hausdorff content of $E\subset K$ defined by
\eqref{hc} in the rest of the paper.

Our first result is the following quantitative strong type $(p,p)$ inequality under $\alpha$-dimensional Hausdorff contents. The notion of p.c.f. self-similar set and the strong separation condition will be given in Section 2.1.
\begin{theorem}\label{MR1}
Assume that $K$ is a p.c.f. self-similar set. Let $0<\alpha< s$ and $\frac{\alpha}{s}<p<\infty$. Then, for any $f\in L^p(K,H^\alpha_\mu)$,
\[
\int_K [M_{\mathcal{D}}^\mu f(x)]^p dH^\alpha_\mu\le C_{p,\alpha,s}\int_K |f(x)|^p dH^\alpha_\mu,
\]
where
$$
 C_{p,\alpha,s}:=\begin{cases}
\frac{2^{p+2}}{p-\alpha/s} \ \ &\text{when}\ \ \alpha/s<p<1\\
 \frac{2^{2p+1}}{p(1-\alpha/s)} \ \ &\text{when}\ \ 1\le p<\infty.
\end{cases}
$$
\end{theorem}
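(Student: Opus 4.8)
The plan is to run a Calder\'on--Zygmund/stopping--time argument adapted to the tree of basic cubes and then feed the resulting distributional estimate into the layer--cake formula \eqref{lp}. First I would reduce to $f\ge 0$ and record the two elementary facts that make the content tractable: since $\alpha/s\le 1$, the map $x\mapsto x^{\alpha/s}$ is subadditive, so for every basic cube $Q\in\mathcal{D}$ one has $H^\alpha_\mu(Q)=\mu(Q)^{\alpha/s}$, while for an arbitrary $E\subset K$ one has $\mu(E)^{\alpha/s}\le H^\alpha_\mu(E)$. The crucial structural input is that the basic cubes are nested: any two members of $\mathcal{D}$ are either disjoint or one contains the other, and for fixed $x$ the cubes $K_{\mathbf{i}_n}(x)$ form a single decreasing chain. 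Consequently, for each $t>0$ the level set $\{M_{\mathcal{D}}^\mu f>t\}$ is exactly the disjoint union $\bigcup_j Q_j(t)$ of the \emph{maximal} basic cubes on which the average of $|f|$ exceeds $t$, and on each such cube $t\,\mu(Q_j(t))<\int_{Q_j(t)}|f|\,d\mu$. This is the device by which the dyadic decomposition is transplanted to $K$, overcoming difficulty (a).

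With this in hand the core of the argument is a restricted weak--type estimate. Splitting $f=f\mathbf{1}_{\{|f|>t/2\}}+f\mathbf{1}_{\{|f|\le t/2\}}$ and using $M_{\mathcal{D}}^\mu(f\mathbf{1}_{\{|f|\le t/2\}})\le t/2$, I would reduce $\{M_{\mathcal{D}}^\mu f>t\}$ to the stopping cubes of $f\mathbf{1}_{\{|f|>t/2\}}$ and aim to prove
\[
H^\alpha_\mu\big(\{M_{\mathcal{D}}^\mu f>t\}\big)\le \frac{C}{t^{\alpha/s}}\int_{\{|f|>t/2\}}|f|^{\alpha/s}\,dH^\alpha_\mu .
\]
Granting this, inserting it into $\|M_{\mathcal{D}}^\mu f\|^p_{L^p(K,H^\alpha_\mu)}=p\int_0^\infty t^{p-1}H^\alpha_\mu(\{M_{\mathcal{D}}^\mu f>t\})\,dt$, writing the inner Choquet integral through its own layer--cake, and applying Tonelli to the two resulting genuine Lebesgue integrals yields the strong $(p,p)$ bound; the integral $\int_0^\infty t^{p-1-\alpha/s}(\cdots)\,dt$ converges precisely because $p>\alpha/s$ and produces the factor $1/(p-\alpha/s)$, which settles the range $\alpha/s<p<1$. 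For $p\ge 1$ I would instead use the pointwise Jensen bound $[M_{\mathcal{D}}^\mu f]^p\le M_{\mathcal{D}}^\mu(|f|^p)$ to reduce matters to the endpoint $p=1$, i.e.\ to the strong $(1,1)$ estimate $\int_K M_{\mathcal{D}}^\mu g\,dH^\alpha_\mu\le \tfrac{C}{1-\alpha/s}\int_K g\,dH^\alpha_\mu$ for $g\ge 0$. It is worth stressing that this $L^1$ endpoint is genuinely available only because $\alpha/s<1$: the content is smaller than a measure, so difficulty (b) in fact works in our favour and turns the classically false strong $(1,1)$ bound into a true one, with the $1/(1-\alpha/s)$ blow--up as $\alpha/s\uparrow 1$.

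The hard part is the passage, in either estimate, from the stopping cubes $\{Q_j(t)\}$ to a Choquet integral of $|f|$. Combining $H^\alpha_\mu(Q_j)=\mu(Q_j)^{\alpha/s}$ with $t\,\mu(Q_j)<\int_{Q_j}|f|\,d\mu$ reduces the problem to controlling $\sum_j\big(\int_{Q_j}|f|\,d\mu\big)^{\alpha/s}$ by $\int_K|f|^{\alpha/s}\,dH^\alpha_\mu$; but because the Choquet integral is only subadditive (difficulty (c)), this \emph{fails} for arbitrary disjoint cubes—taking $|f|\equiv 1$ and $\{Q_j\}$ a full generation $\Sigma_n$ gives $\sum_j\mu(Q_j)^{\alpha/s}\gg 1=\int_K 1\,dH^\alpha_\mu$—so a naive union bound destroys both the finiteness and the $m$--independence of the constant. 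This is precisely where I expect to invoke the crucial Lemma~\ref{zfg}: it should supply a Jensen-/averaging-type surrogate for linearity, of the shape $\big(\int_Q g\,d\mu\big)^{\alpha/s}\le\int_Q g^{\alpha/s}\,dH^\alpha_\mu$, and couple it with the maximality of the stopping cubes—which forbids one stopping cube from sitting inside another and forces $\bigcup_j Q_j$ to be genuinely large wherever many cubes appear—so that the governing quantity is the content of the union rather than the lossy sum. Establishing Lemma~\ref{zfg} and checking that the stopping structure keeps the constant finite and free of $m$ is the main obstacle; once linearity is restored in this controlled way, the only remaining steps are the elementary scalar inequalities for $x\mapsto x^p$ (subadditivity when $p<1$, convexity when $p\ge 1$), whose opposite directions are exactly what produce the two separate forms of $C_{p,\alpha,s}$, together with the bookkeeping of the layer--cake integration.
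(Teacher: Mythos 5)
Your architecture --- prove a restricted weak type $(\alpha/s,\alpha/s)$ estimate via stopping cubes and then interpolate through the layer--cake formula --- is a genuinely different route from the paper's. The paper never passes through the weak-type bound to obtain Theorem \ref{MR1}: it covers each level set $\{2^k<f\le 2^{k+1}\}$ by disjoint basic cubes, majorizes $f^p$ (resp.\ $f$) by $\sum_k 2^{(k+1)p}\chi_{B_k}$ (resp.\ $\sum_k 2^{k+1}\chi_{B_k}$), and reduces everything to the single explicit computation
$\int_K[M_{\mathcal{D}}^\mu\chi_{Q}]^p\,dH^\alpha_\mu\le\frac{2p}{p-\alpha/s}\,\mu(Q)^{\alpha/s}$
for a basic cube $Q$ (Lemma \ref{tzfgj}), summed using the countable sublinearity of the Choquet integral (Proposition \ref{subL}); that computation exploits the explicit staircase form of $M_{\mathcal{D}}^\mu\chi_Q$ along the chain of ancestors of $Q$ and is what produces the stated constants. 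Your route instead requires the full strength of Theorem \ref{MR2}, hence Lemmas \ref{zfg} and \ref{jfgj}. Your reduction of the case $p\ge1$ to $p=1$ via $[M_{\mathcal{D}}^\mu f]^p\le M_{\mathcal{D}}^\mu(|f|^p)$, and the convergence analysis of $\int_0^\infty t^{p-1-\alpha/s}(\cdots)\,dt$ for $p>\alpha/s$, are both sound.

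The genuine gap is in the step you yourself flag as the main obstacle. You correctly observe (with the $\Sigma_n$ example) that $\sum_j\mu(Q_j)^{\alpha/s}$ is \emph{not} controlled by $H^\alpha_\mu(\bigcup_jQ_j)$ for disjoint basic cubes, but your proposed fix --- that maximality of the stopping cubes ``forbids one from sitting inside another and forces the union to be large'' --- is not an argument: maximality only yields disjointness, which is exactly the hypothesis under which your counterexample lives. What actually saves the day (Lemma \ref{zfg}) is a further \emph{selection}: one extracts from the disjoint stopping family a sparse subfamily $\{I_{j_k}\}_k$ obeying the Carleson packing condition $\sum_{I_{j_k}\subset I}\mu(I_{j_k})^{\alpha/s}\le 2\mu(I)^{\alpha/s}$ for every $I\in\mathcal{D}$, in such a way that the discarded cubes are absorbed, $H^\alpha_\mu(\bigcup_jI_j)\le2\sum_k\mu(I_{j_k})^{\alpha/s}$, and --- the crucial surrogate for linearity --- $\sum_k\int_{I_{j_k}}|f|\,dH^\alpha_\mu\le2\int_{\bigcup_kI_{j_k}}|f|\,dH^\alpha_\mu$. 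The single-cube inequality $\bigl(\int_Qg\,d\mu\bigr)^{\alpha/s}\le(s/\alpha)^{\alpha/s}\int_Qg^{\alpha/s}\,dH^\alpha_\mu$ that you guessed is the comparatively easy Lemma \ref{jfgj} and does not by itself resolve the summation problem. Finally, even granting Theorem \ref{MR2}, your interpolation yields constants of the form $\frac{2^{p+2}p\,(s/\alpha)^{\alpha/s}}{p-\alpha/s}$ for $\alpha/s<p<1$ and $\frac{8(s/\alpha)^{\alpha/s}}{1-\alpha/s}$ for $p\ge1$, which do not match the stated $C_{p,\alpha,s}$ (the extra factor $p(s/\alpha)^{\alpha/s}$ can exceed $1$); this is harmless if one only wants some quantitative bound, but it does not reproduce the theorem exactly as written.
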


At the endpoint case, we have the quantitative weak type $(\alpha/s,\alpha/s)$ inequality with Hausdorff contents:
\begin{theorem}\label{MR2}
Assume that $K$ is a p.c.f. self-similar set. Let $0<\alpha\le s$. Then, for any $f\in L^{\frac \alpha s}(K,H^\alpha_\mu)$ and $t>0$,
\[
H^\alpha_\mu(\{x\in K: M_{\mathcal{D}}^\mu f(x)>t\})\le 4\left(\frac{s}{\alpha}\right)^{\frac{\alpha}{s}}t^{-\frac{\alpha}{s}}\int_K |f(x)|^{\frac{\alpha}{s}}dH^\alpha_\mu.
\]
\end{theorem}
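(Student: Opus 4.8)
The plan is to run a Calder\'on--Zygmund stopping-time argument adapted to the tree structure of the basic cubes, and then to convert the resulting cube estimate into a Hausdorff-content estimate by means of the crucial Lemma~\ref{zfg}. Throughout I write $\beta:=\alpha/s\in(0,1]$ and assume $f\ge 0$ (replacing $f$ by $|f|$). First I would record the elementary facts the argument rests on: by the SSC the family $\mathcal{D}$ is a tree, so any two basic cubes are either nested or disjoint; for every basic cube $I$ one has $H^\alpha_\mu(I)=\mu(I)^{\beta}$ (the cover $\{I\}$ gives ``$\le$'', and subadditivity together with self-similarity gives ``$\ge$''), and more generally $\mu(A)^{\beta}\le H^\alpha_\mu(A)$ for all $A\subset K$; finally, from \eqref{lp} with $p=\beta$, one has $\int_K f^{\beta}\,dH^\alpha_\mu=\beta\int_0^\infty\lambda^{\beta-1}H^\alpha_\mu(\{f>\lambda\})\,d\lambda$.

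Next I would perform the stopping-time selection. Using the tree structure, for each $x$ with $M_{\mathcal{D}}^\mu f(x)>t$ there is a basic cube through $x$ whose $\mu$-average of $f$ exceeds $t$; let $\{I_j\}$ be the \emph{maximal} such cubes. They are pairwise disjoint, $\{M_{\mathcal{D}}^\mu f>t\}=\bigcup_j I_j$, each satisfies $\frac1{\mu(I_j)}\int_{I_j}f\,d\mu>t$, and by maximality the $\mu$-average of $f$ over the parent of $I_j$ is $\le t$. This construction replaces the Euclidean dyadic decomposition and so overcomes difficulty~(a). The stopping inequality $\mu(I_j)<t^{-1}\int_{I_j}f\,d\mu$ combined with Lemma~\ref{zfg} (the crucial lemma singled out in the introduction), which I expect to furnish a Jensen/H\"older-type inequality of the form $\left(\int_{I_j}f\,d\mu\right)^{\beta}\le\int_{I_j}f^{\beta}\,dH^\alpha_\mu$, then yields the clean per-cube bound $t^{\beta}H^\alpha_\mu(I_j)=t^{\beta}\mu(I_j)^{\beta}<\int_{I_j}f^{\beta}\,dH^\alpha_\mu$, which is exactly where the nonlinearity of the Choquet integral (difficulty~(c)) gets absorbed.

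The hard part will be passing from these per-cube estimates to a bound for $H^\alpha_\mu(\bigcup_j I_j)$. The naive route $H^\alpha_\mu(\bigcup_j I_j)\le\sum_j\mu(I_j)^{\beta}$ is \emph{too lossy}: since $H^\alpha_\mu$ is only a subadditive outer measure (difficulty~(b)), when the stopping cubes cluster under a common ancestor the true content of their union is far smaller than $\sum_j\mu(I_j)^{\beta}$, and in fact $\sum_j\mu(I_j)^{\beta}$ cannot be dominated by $t^{-\beta}\int_K f^{\beta}\,dH^\alpha_\mu$. The same subadditivity, applied to the level sets of $f$, shows that $\sum_j\int_{I_j}f^{\beta}\,dH^\alpha_\mu$ need not be controlled by $\int_K f^{\beta}\,dH^\alpha_\mu$ either, so one may not simply sum the per-cube inequalities. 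The remedy I would pursue is to estimate the content of the level set through an \emph{efficient}, possibly merged, cover and to match it to the Choquet integral directly; concretely it suffices to establish the sharper reverse estimate
\[
\int_{\{M_{\mathcal{D}}^\mu f>t\}} f^{\beta}\,dH^\alpha_\mu\ \ge\ c\,t^{\beta}\,H^\alpha_\mu\bigl(\{M_{\mathcal{D}}^\mu f>t\}\bigr),\qquad c=\tfrac14(\alpha/s)^{\alpha/s}.
\]
Expanding both sides by the layer-cake formula and using that on each $I_j$ the average of $f$ exceeds $t$ (so that, in content, a definite proportion of $I_j$ carries $\{f>\lambda\}$ for $\lambda$ comparable to $t$), one transfers the content of the union to the Choquet mass \emph{without ever forming the lossy sum} $\sum_j\mu(I_j)^{\beta}$; this is the step where Lemma~\ref{zfg} must be invoked at the level of the whole level set rather than cube-by-cube, and it is the genuine obstacle of the theorem.

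Finally I would track the constant. Splitting $f=f\chi_{\{f>t/2\}}+f\chi_{\{f\le t/2\}}$ gives $\{M_{\mathcal{D}}^\mu f>t\}\subseteq\{M_{\mathcal{D}}^\mu(f\chi_{\{f>t/2\}})>t/2\}$ and reduces matters to the part where $f>t/2$, accounting for a factor $2$; optimizing the truncation height produces the factor $(s/\alpha)^{\alpha/s}=\beta^{-\beta}$, leading to the stated constant $4(s/\alpha)^{\alpha/s}$. When $\alpha=s$ (so $\beta=1$) the content $H^s_\mu$ behaves like the measure $\mu$ and the whole scheme degenerates to the classical dyadic weak-$(1,1)$ inequality, where linearity makes the content obstacle disappear; the difficulty above is genuinely a $\beta<1$ phenomenon.
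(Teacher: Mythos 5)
Your skeleton is the right one and matches the paper's: truncate to $\mathcal{D}_n$, take the maximal stopping cubes $\{I_j\}$ whose $\mu$-averages exceed $t$, and bound each $\mu(I_j)^{\alpha/s}$ by $(s/\alpha)^{\alpha/s}t^{-\alpha/s}\int_{I_j}|f|^{\alpha/s}\,dH^\alpha_\mu$ via a Jensen-type inequality --- this last step is Lemma \ref{jfgj} in the paper, not Lemma \ref{zfg}, and it is the sole source of the factor $(s/\alpha)^{\alpha/s}$ (your constant-free version $(\int_I f\,d\mu)^{\alpha/s}\le\int_I f^{\alpha/s}\,dH^\alpha_\mu$ is neither proved nor needed). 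You also correctly diagnose the crux: for the full family one can use neither $H^\alpha_\mu(\bigcup_j I_j)\le\sum_j\mu(I_j)^{\alpha/s}$ nor $\sum_j\int_{I_j}f^{\alpha/s}\,dH^\alpha_\mu\le C\int_K f^{\alpha/s}\,dH^\alpha_\mu$. But at exactly this point the proposal stops proving and starts describing. The ``sharper reverse estimate'' $\int_{\{M^\mu_{\mathcal D}f>t\}}f^{\alpha/s}\,dH^\alpha_\mu\ge c\,t^{\alpha/s}H^\alpha_\mu(\{M^\mu_{\mathcal D}f>t\})$ is only asserted, and the justification offered --- that ``in content, a definite proportion of $I_j$ carries $\{f>\lambda\}$ for $\lambda$ comparable to $t$'' --- is false: the average of $f$ over $I_j$ can exceed $t$ while $f$ concentrates on a subset of $I_j$ of arbitrarily small content, so no level $\lambda\sim t$ need occupy a definite proportion of $I_j$. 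The truncation $f\chi_{\{f>t/2\}}$ does not repair this, and your account of where $(s/\alpha)^{\alpha/s}$ and the factor $4$ come from (``optimizing the truncation height'') does not correspond to any computation that is actually carried out.

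The missing idea is the actual content of Lemma \ref{zfg}: from the disjoint maximal cubes one extracts a \emph{subfamily} $\{I_{j_k}\}_k$ satisfying the packing condition $\sum_{I_{j_k}\subset I}\mu(I_{j_k})^{\alpha/s}\le 2\mu(I)^{\alpha/s}$ for every $I\in\mathcal D$. This single condition resolves both halves of the obstacle at once: part (ii) gives $H^\alpha_\mu(\bigcup_j I_j)\le 2\sum_k\mu(I_{j_k})^{\alpha/s}$, where the union is over \emph{all} stopping cubes (each discarded cube is absorbed into an ancestor whose content is already controlled by the selected ones), and part (iii) gives the quasi-additivity $\sum_k\int_{I_{j_k}}f^{\alpha/s}\,dH^\alpha_\mu\le 2\int_{\bigcup_k I_{j_k}}f^{\alpha/s}\,dH^\alpha_\mu$ precisely because the selected cubes cannot cluster too heavily inside any covering cube $P_i$. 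Chaining (ii), the per-cube bound from Lemma \ref{jfgj}, and (iii) yields the constant $2\cdot 2\cdot(s/\alpha)^{\alpha/s}$, after which one lets $n\to\infty$ using the upper semicontinuity of $H^\alpha_\mu$ (Proposition \ref{uppersc}). Without this selection step, or a worked-out substitute for it, the argument does not close.
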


\begin{rem}
Here we would like emphasis that the measure $\mu$ on a p.c.f. self-similar set $K$ is not necessarily doubling and hence the corresponding Hausdorff content in Theorems \ref{MR1} and \ref{MR2} is not ``doubling". The above two theorems provide Hardy--Littlewood maximal inequalities on ``non-doubling" Hausdorff content. Recall that if the measure is doubling, then the Hardy--Littlewood maximal operator is of weak type
$(1,1)$ and hence of strong type $(p,p)$ for any $p\in (1,\infty)$. However, in general, weak type $(1,1)$ inequalities may not hold. Moreover, it is even possible to construct metric measure spaces for which the associated maximal operator fails to be of weak type  $(p,p)$ for every $p\in [1,\infty)$; see, for example, the works of Aldaz \cite{a12} and Li \cite{Li04,Li05,Li07}.
\end{rem}

\begin{rem}
The novel phenomenon under Hausdorff contents is that the endpoint $\alpha/s$ allows less than 1. Thus, the range $(\alpha/s,\infty)$ of $p$ in Theorem \ref{MR1} is larger than the usual $(1,\infty)$ due to the Hausdorff content. Moreover, under the Hausdorff content setting, the integral is not linear, and hence the weak type inequality is hard to prove, where the classical methods on Lebesgue spaces fail. Thus, in the paper, we directly show  Theorem \ref{MR1} by passing from characteristic functions to any $f\in L^p(K,H^\alpha_\mu)$ with the help of Lemma \ref{tzfgj}. We point out that one can also establish Theorem \ref{MR1} from the harder weak type inequality (Theorem \ref{MR2}) by the standard argument as the classical case.
\end{rem}

\begin{rem}
Determining the optimal constants in Theorems \ref{MR1} and \ref{MR2} would be a challenging and interesting topic. For the optimal constants in the classical Hardy--Littlewood maximal inequalities, we refer to the celebrated work by Melas \cite{m03,m02} and Aldaz \cite{a11}.
\end{rem}

As a consequence of Theorem \ref{MR2} with $s=\alpha$, we have the following weak type $(1,1)$ inequality.

\begin{corollary}\label{weaki}
Assume that $K$ is a p.c.f. self-similar set. For $f\in L^1(K,\mu)$ and any $t>0$, we have
\[
\mu\left(\left\{x\in K: M_{\mathcal{D}}^\mu f(x)>t\right\}\right)\le \frac{4}{t}\int_K |f(x)|d\mu.
\]
\end{corollary}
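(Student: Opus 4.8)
The plan is to deduce the corollary directly from Theorem \ref{MR2} by specializing $\alpha=s$, the only genuine subtlety being to identify the Hausdorff content $H^s_\mu$ with the measure $\mu$ itself. First I would set $\alpha=s$ in Theorem \ref{MR2}. Then $\alpha/s=1$, so that $L^{\alpha/s}(K,H^\alpha_\mu)=L^1(K,H^s_\mu)$, the factor $(s/\alpha)^{\alpha/s}$ collapses to $1$, the weight $t^{-\alpha/s}$ becomes $t^{-1}$, and the conclusion of Theorem \ref{MR2} reads
\[
H^s_\mu(\{x\in K:\ M_{\mathcal{D}}^\mu f(x)>t\})\le \frac{4}{t}\int_K |f(x)|\,dH^s_\mu.
\]
Thus it remains only to replace $H^s_\mu$ by $\mu$ on both sides.

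The heart of the argument is the claim that $H^s_\mu(E)=\mu(E)$ for every Borel set $E\subset K$. By definition $H^s_\mu(E)=\inf\{\sum_j\mu(I_j):E\subset\bigcup_j I_j,\ I_j\in\mathcal{D}\}$, since the exponent $\alpha/s$ equals $1$. One inequality is immediate: for any cover of $E$ by basic cubes, countable subadditivity of the probability measure $\mu$ gives $\mu(E)\le\sum_j\mu(I_j)$, and taking the infimum yields $\mu(E)\le H^s_\mu(E)$. For the reverse inequality I would invoke the outer regularity of $\mu$ (a Borel probability measure on the compact set $K$) to choose, for $\epsilon>0$, an open set $U\supset E$ with $\mu(U)\le\mu(E)+\epsilon$. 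The key structural fact, which I expect to be the main point to justify carefully, is that under the SSC the basic cubes form a nested tree: any two of them are either disjoint or one contains the other, each is relatively clopen in $K$, and $\diam(K_{\textbf{i}_n}(x))\to 0$ along every branch. Consequently the maximal basic cubes contained in $U$ are pairwise disjoint and cover $U\cap K\supset E$; writing $U\cap K=\bigsqcup_j I_j$ with $I_j\in\mathcal{D}$ disjoint, countable additivity of $\mu$ gives $\sum_j\mu(I_j)=\mu(U\cap K)\le\mu(E)+\epsilon$. Hence $H^s_\mu(E)\le\mu(E)+\epsilon$ for all $\epsilon>0$, proving $H^s_\mu(E)\le\mu(E)$ and thus equality.

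With the identity $H^s_\mu=\mu$ on Borel sets in hand, both sides of the specialized inequality convert immediately. Since $M_{\mathcal{D}}^\mu f$ is a supremum over $n\in\N$ of functions constant on level-$n$ basic cubes, its super-level set $\{M_{\mathcal{D}}^\mu f>t\}$ is a (relatively clopen, hence Borel) union of basic cubes, so $H^s_\mu(\{M_{\mathcal{D}}^\mu f>t\})=\mu(\{M_{\mathcal{D}}^\mu f>t\})$. On the right-hand side, the Choquet integral against $H^s_\mu$ reduces to the Lebesgue integral against $\mu$ via the layer-cake formula,
\[
\int_K |f|\,dH^s_\mu=\int_0^\infty H^s_\mu(\{|f|>t\})\,dt=\int_0^\infty \mu(\{|f|>t\})\,dt=\int_K |f|\,d\mu,
\]
which in particular shows $L^1(K,H^s_\mu)=L^1(K,\mu)$. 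Substituting these two identities into the displayed inequality produces exactly $\mu(\{M_{\mathcal{D}}^\mu f>t\})\le\frac{4}{t}\int_K |f|\,d\mu$, completing the proof. The only real obstacle is the content-equals-measure identity, and within it the disjointification of an open set into basic cubes afforded by the SSC; everything else is a routine substitution.
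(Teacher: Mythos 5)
Your proof is correct and follows essentially the same route as the paper: the corollary is deduced from Theorem \ref{MR2} with $\alpha=s$ together with the identity $H^s_\mu(E)=\mu(E)$ for Borel $E\subset K$, which is precisely the paper's Lemma \ref{eq}, and the layer-cake conversion of the Choquet integral into $\int_K|f|\,d\mu$. The only divergence is in how that identity is established: the paper applies Carath\'eodory's extension theorem to the semi-algebra $\mathcal{D}$, whereas you argue directly via outer regularity of $\mu$ and the decomposition of a relatively open subset of $K$ into its pairwise disjoint maximal basic cubes; both arguments are valid.
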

We point out that, it is well-known that the above weak-type inequality in Corollary \ref{weaki} holds for doubling measure, see, for example \cite{Str}. In particular,  Fu et al. \cite{FTW} obtained a similar inequality  for a special class of Bernoulli convolutions, and used it to study the mock Fourier series on fractal sets. The difference is that the measure $\mu$ here is not necessarily doubling and we obtain a better upper bound.

Applying Corollary \ref{weaki},
we have the following strong type $(p,p)$ inequality.

\begin{corollary}\label{pp}
Assume that $K$ is a p.c.f. self-similar set. For $1<p<\infty$ and $f\in L^p(K,\mu)$, we have
\[
\int_K [M_{\mathcal{D}}^\mu f(x)]^p d\mu\le
\frac{2^{p+2}p}{p-1}\int_K |f(x)|^p d\mu.
\]
\end{corollary}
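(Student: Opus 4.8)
The plan is to derive the strong $(p,p)$ bound from the weak $(1,1)$ inequality in Corollary~\ref{weaki} by a Marcinkiewicz-type interpolation argument, implemented through a truncation at height $t/2$ so as to track the exact constant. The key structural advantage here is that the underlying set function is the genuine measure $\mu$ rather than the Hausdorff content $H^\alpha_\mu$, so the layer-cake representation, countable additivity, and the Fubini--Tonelli theorem are all available without the difficulties (b)--(c) flagged in the introduction.

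First I would record two elementary facts about $M_{\mathcal{D}}^\mu$. Since $M_{\mathcal{D}}^\mu g(x)$ is a supremum of $\mu$-averages of $|g|$, one has the trivial bound $M_{\mathcal{D}}^\mu g(x)\le \|g\|_{L^\infty(K,\mu)}$ for every $x$; and since $\frac{1}{\mu(I)}\int_I|f_1+f_2|\,d\mu\le \frac{1}{\mu(I)}\int_I|f_1|\,d\mu+\frac{1}{\mu(I)}\int_I|f_2|\,d\mu$ for any basic cube $I\ni x$, taking the supremum over $I$ shows the operator is sublinear, $M_{\mathcal{D}}^\mu(f_1+f_2)\le M_{\mathcal{D}}^\mu f_1+M_{\mathcal{D}}^\mu f_2$.

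Next, for each fixed $t>0$ I would split $f=g_t+h_t$, where $g_t:=f\,\chi_{\{|f|>t/2\}}$ (which lies in $L^1(K,\mu)$ since $f\in L^p(K,\mu)$) and $h_t:=f\,\chi_{\{|f|\le t/2\}}$. By the $L^\infty$ bound, $M_{\mathcal{D}}^\mu h_t\le t/2$ pointwise, so sublinearity yields the inclusion $\{x:M_{\mathcal{D}}^\mu f(x)>t\}\subseteq\{x:M_{\mathcal{D}}^\mu g_t(x)>t/2\}$. Applying Corollary~\ref{weaki} to $g_t$ then gives
\[
\mu\bigl(\{M_{\mathcal{D}}^\mu f>t\}\bigr)\le \mu\bigl(\{M_{\mathcal{D}}^\mu g_t>t/2\}\bigr)\le \frac{8}{t}\int_{\{|f|>t/2\}}|f|\,d\mu.
\]

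Finally I would insert this distributional estimate into the layer-cake identity $\int_K(M_{\mathcal{D}}^\mu f)^p\,d\mu=p\int_0^\infty t^{p-1}\mu(\{M_{\mathcal{D}}^\mu f>t\})\,dt$ and interchange the order of integration by Tonelli; evaluating the inner integral $\int_0^{2|f(x)|}t^{p-2}\,dt=(2|f(x)|)^{p-1}/(p-1)$ collapses everything to $\frac{8p\cdot 2^{p-1}}{p-1}\int_K|f|^p\,d\mu=\frac{2^{p+2}p}{p-1}\int_K|f|^p\,d\mu$, the claimed bound. I do not expect a serious obstacle in this argument: the only genuine care is the bookkeeping of constants --- the choice to truncate at height $t/2$ is precisely what produces the factor $2^{p-1}$ and hence the exact constant $2^{p+2}p/(p-1)$ --- together with the routine verification that Tonelli applies, which it does since the integrand is nonnegative and $\mu$ is a genuine probability measure.
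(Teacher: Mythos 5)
Your proof is correct and is essentially the same argument as the paper's: both split $f$ into a part exceeding a level proportional to $t$ and a bounded remainder, use the trivial $L^\infty$ bound on the latter and Corollary \ref{weaki} on the former, and then conclude by the layer-cake formula and Tonelli; your truncation at $t/2$ versus the paper's truncation at $s$ with level $2s$ is merely a change of variables, and both yield the identical constant $2^{p+2}p/(p-1)$.
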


As an application of Corollary \ref{weaki}, we show the Lebesgue differentiation theorem on the self-similar set $K$ (see Theorem \ref{A2}). This Lebesgue differentiation theorem, combined with Theorem \ref{MR1}, further induces the following equivalent characterization of Lebesgue--Choquet spaces $L^p(K,H^{\alpha}_\mu)$.
In what follows, let $L^{\infty}(K,H^{\alpha}_\mu)$ denote the space of all functions $g$ on $K$ such that the quasi-norm
$$\|g\|_{L^\infty(K,H^{\alpha}_\mu)}:=\inf_{E_0\subset K,\,H^{\alpha}_\mu(E_0)=0}\sup_{x\in K\backslash E_0}|g(x)|<\infty.$$

\begin{theorem}\label{lpc}
Assume that $K$ is a p.c.f. self-similar set. Let $0<\alpha\le s$ and $\alpha/s< p\le \infty$. Then
$f\in L^p(K,H^{\alpha}_\mu)$ if and only if $M_{\mathcal{D}}^\mu f\in L^p(K,H^{\alpha}_\mu)$. Moreover, there exists a positive constant $C$ such that
\[
\|f\|_{ L^p(K,H^{\alpha}_\mu)}\le \| M_{\mathcal{D}}^\mu f\|_{ L^p(K,H^{\alpha}_\mu)}\le C\|f\|_{ L^p(K,H^{\alpha}_\mu)}.
\]
\end{theorem}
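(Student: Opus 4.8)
The plan is to notice that the asserted equivalence $f\in L^p(K,H^\alpha_\mu)\iff M_{\mathcal{D}}^\mu f\in L^p(K,H^\alpha_\mu)$ is an immediate formal consequence of the two-sided norm estimate: from $\|f\|_{L^p(K,H^\alpha_\mu)}\le \|M_{\mathcal{D}}^\mu f\|_{L^p(K,H^\alpha_\mu)}\le C\|f\|_{L^p(K,H^\alpha_\mu)}$, finiteness of either side forces finiteness of the other. Hence the whole theorem reduces to establishing the left inequality (with constant $1$) and the right inequality (with a constant $C$ that may depend on $p,\alpha,s$). I would prove the two inequalities separately, and within each I would split according to whether $p<\infty$ or $p=\infty$, and whether $0<\alpha<s$ or $\alpha=s$.

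For the right inequality with $\alpha/s<p<\infty$ this is exactly the quantitative strong type $(p,p)$ bound already available: when $0<\alpha<s$ it is Theorem \ref{MR1}, and when $\alpha=s$ (so $p>1$ and $H^s_\mu$ reduces to $\mu$) it is Corollary \ref{pp}, in each case with an explicit admissible constant. The endpoint $p=\infty$ I would handle directly from the definition of the operator: each average $\mu(K_{\mathbf{i}_n}(x))^{-1}\int_{K_{\mathbf{i}_n}(x)}|f|\,d\mu$ is bounded by the $H^\alpha_\mu$-essential supremum of $|f|$, where one uses that every $H^\alpha_\mu$-null set is $\mu$-null (a consequence of $\mu(I)^{\alpha/s}\ge\mu(I)$ for $\mu(I)\le 1$ when $\alpha\le s$). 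This gives $M_{\mathcal{D}}^\mu f(x)\le \|f\|_{L^\infty(K,H^\alpha_\mu)}$ for every $x$, hence $\|M_{\mathcal{D}}^\mu f\|_{L^\infty(K,H^\alpha_\mu)}\le \|f\|_{L^\infty(K,H^\alpha_\mu)}$ with $C=1$.

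For the left inequality the key input is the Lebesgue differentiation theorem on $K$ (Theorem \ref{A2}), in its $H^\alpha_\mu$-almost everywhere form: for $H^\alpha_\mu$-almost every $x$,
\[
\lim_{n\to\infty}\frac{1}{\mu(K_{\mathbf{i}_n}(x))}\int_{K_{\mathbf{i}_n}(x)}|f(y)|\,d\mu(y)=|f(x)|.
\]
Since each average on the left is dominated by the supremum defining $M_{\mathcal{D}}^\mu f(x)$, this yields $|f(x)|\le M_{\mathcal{D}}^\mu f(x)$ for $H^\alpha_\mu$-almost every $x$. The inequality then propagates through the Choquet integral by monotonicity: for each $t>0$ the inclusion $\{x:|f(x)|>t\}\subset\{x:M_{\mathcal{D}}^\mu f(x)>t\}\cup E_0$ with $H^\alpha_\mu(E_0)=0$, together with monotonicity and countable subadditivity of the outer measure $H^\alpha_\mu$, gives $H^\alpha_\mu(\{|f|>t\})\le H^\alpha_\mu(\{M_{\mathcal{D}}^\mu f>t\})$; integrating in $t$ in the defining formula \eqref{lp} when $p<\infty$, and comparing essential suprema when $p=\infty$, yields $\|f\|_{L^p(K,H^\alpha_\mu)}\le \|M_{\mathcal{D}}^\mu f\|_{L^p(K,H^\alpha_\mu)}$ for all $\alpha/s<p\le\infty$.

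The main obstacle I anticipate is precisely that the differentiation statement must hold $H^\alpha_\mu$-almost everywhere rather than merely $\mu$-almost everywhere. Because $\mu(E)=0$ does \emph{not} force $H^\alpha_\mu(E)=0$ when $\alpha<s$, one cannot simply feed a $\mu$-a.e. differentiation theorem into the Choquet integral, whose monotonicity respects exactly the $H^\alpha_\mu$-null sets. This is where the weak type $(\alpha/s,\alpha/s)$ inequality of Theorem \ref{MR2}, rather than the weak $(1,1)$ inequality for $\mu$ in Corollary \ref{weaki}, is the correct tool, since it controls the relevant exceptional sets on the $H^\alpha_\mu$-scale. Securing that Theorem \ref{A2} is genuinely formulated and proved in the $H^\alpha_\mu$-a.e. sense is the delicate point that legitimizes the monotonicity argument; once it is in place, both inequalities, and hence the equivalence, follow without further difficulty.
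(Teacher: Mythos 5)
Your strategy coincides with the paper's: the proof given there is literally ``apply Corollary \ref{Ac1} and Theorem \ref{MR1}'', i.e.\ the pointwise lower bound $|f|\le M_{\mathcal{D}}^\mu f$ from the differentiation theorem plus the strong type $(p,p)$ estimate, and you additionally spell out the cases $p=\infty$ and $\alpha=s$ (via Corollary \ref{pp}) that the paper leaves implicit. The one place where you go beyond the paper is in flagging the delicate point, and you are right that it is the crux: Corollary \ref{Ac1} gives $|f|\le M_{\mathcal{D}}^\mu f$ only $\mu$-almost everywhere, and since a $\mu$-null set need not be $H^\alpha_\mu$-null when $\alpha<s$ (only the reverse implication holds, because $\mu(I)^{\alpha/s}\ge\mu(I)$ for $\mu(I)\le 1$), a $\mu$-a.e.\ inequality does not by itself yield $H^\alpha_\mu(\{|f|>t\})\le H^\alpha_\mu(\{M_{\mathcal{D}}^\mu f>t\})$. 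The paper's one-line proof passes over this silently; your proposal names the right repair (rerun the differentiation argument against the weak type $(\alpha/s,\alpha/s)$ bound of Theorem \ref{MR2}, so that the exceptional set is controlled on the $H^\alpha_\mu$-scale) but does not carry it out. Executing it is not free: the approximation step must then be performed in $L^{\alpha/s}(K,H^\alpha_\mu)$ rather than in $L^1(K,\mu)$, one needs a Chebyshev-type inequality for the Choquet integral, and the density of continuous functions in the Choquet space is a genuine additional fact to verify, made awkward by the nonlinearity of the integral. So your write-up is a faithful, and in fact more careful, rendering of the paper's argument, but the left-hand inequality for $0<\alpha<s$ remains contingent on an $H^\alpha_\mu$-a.e.\ differentiation theorem that neither you nor the paper actually establishes; for $\alpha=s$ (where $H^s_\mu=\mu$) both arguments are complete.
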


From function spaces viewpoint, Corollary \ref{pp} yields the bounded mapping property:
\[M_{\mathcal{D}}^\mu:\ L^p(K,\mu)\to L^p(K,\mu)
\]
for any $1<p<\infty$, and in the endpoint case $p=1$, Corollary \ref{weaki} yields the bounded mapping property:
\[M_{\mathcal{D}}^\mu:\ L^1(K,\mu)\to L^{1,\infty}(K,\mu).\]
Here $L^{1,\infty}(K,\mu):=\{f:\ \sup_{t>0}t\mu\{x\in K: |f(x)|>t\}<\infty\}$.
Then it is naturally to ask: Which function
spaces $X$ such that the maximal operator $M^\mu_{\mathcal{D}}$ is bounded from $X$ to $L^1(K,\mu)$, i.e., for which $X$,
\[M_{\mathcal{D}}^\mu:\ X\to L^{1}(K,\mu)\]
is a bounded mapping?
 When we are considering this problem, the Zygmund space
$L\log L(K,\mu)$ enters the picture. Namely, we establish a similar  Wiener's $L\log L$ inequality on p.c.f. self-similar set $K$ as follows to answer this question.

\begin{proposition}\label{MR3}
Assume that $K$ is a p.c.f. self-similar set.  For  $|f|\log^+|f|\in L^1(K,\mu)$, we have
\[
\int_K M_{\mathcal{D}}^\mu f(x) d\mu\le 2\mu(K)+8\int_K |f(x)|\log^+|f(x)| d\mu,
\]
where $\log^+t:=\max\{0,\log t\}$ for any $t>0$.
\end{proposition}

Define $L\log L(K,\mu)$ to be the family of all $\mu$-measurable
function $f$ on $K$ for which
\[
\int_K |f(x)|\log^+|f(x)| d\mu<\infty.
\]
Proposition \ref{MR3} shows that $M^\mu_{\mathcal{D}}$ is a bounded operator from $L\log L(K,\mu)$ to $L^1(K,\mu)$, that is, it yields the following bounded mapping property:
\[
M^\mu_{\mathcal{D}}:\ L\log L(K,\mu)\to L^1(K,\mu).
\]
We remark that Proposition \ref{MR3} is also true for
$p=\frac{\alpha}{s}$ with $0<\alpha<s$ and
replacing $\mu$ by the Hausdorff content $H^\alpha_\mu$, that is, the operator $M^\mu_{\mathcal{D}}$ is bounded from $L^{\frac{\alpha}{s}}\log L(K,H^\alpha_\mu)$ to $L^\frac{\alpha}{s}(K,H^\alpha_\mu)$. Here,
$L^{\frac \alpha s}\log L(K,H^\alpha_\mu)$ denotes the family of all $H^\alpha_\mu$-measurable
function $f$ on $K$ for which
\[
\int_K |f(x)|^{\frac \alpha s}\log^+|f(x)| dH^\alpha_\mu<\infty.
\]

Under a stronger condition, the converse of Proposition \ref{MR3} is also proved, which is a counterpart of Stein's $L\log L$ inequality from \cite{s69} in 1969.  Recall that we say that IFS $\{S_i\}_{i=1}^m$ satisfies the \emph{strong separation condition} (SSC) if $S_i(K)\cap S_j(K)=\emptyset$ for all $i\not=j$. In this case, $K$ is called a self-similar set with SSC. Note that, under the SSC, the self-similar measure $\mu$ is doubling, i.e., there exists a positive constant $D$ such that $\mu(B_{2r}(x))\le D\mu(B_r(x))$ for all $x\in K$ and $r>0$; see Olsen \cite{Olsen}. Here and in the sequel, $B_r(x)$ always denotes the ball with the center $x$ and radius $r.$

\begin{proposition}\label{t16}
Assume that $K$ is a self-similar set with SSC and  $f\in L^1(K,\mu)$ and $M_{\mathcal{D}}^\mu f\in L^1(K,\mu)$. Then $|f|\log^+|f|\in L^1(K,\mu)$ and there exists a positive constant $C$ such that
\[
\int_K |f(x)|\log^+|f(x)| d\mu(x)\le C\int_K M_{\mathcal{D}}^\mu f(x) d\mu(x).
\]
\end{proposition}

Combining Propositions \ref{MR3} and \ref{t16}, we further conclude the following Wiener--Stein theorem, which provides an equivalent characterization of Zygmund spaces $L\log L(K,\mu)$.

\begin{theorem}
Suppose  $K$ is a self-similar set with SSC and $f\in L^1(K,\mu)$. Then $M_{\mathcal{D}}^\mu f\in L^1(K,\mu)$ if and only if $f\in L\log L(K,\mu)$.
\end{theorem}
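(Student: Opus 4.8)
The plan is to read off this characterization directly from the two preceding $L\log L$ inequalities, Theorems \ref{MR3} and \ref{t16}, which between them already carry all of the analytic content; the statement to be proved is a clean synthesis of the two directions. Throughout I would use that $\mu$ is a probability measure, so that $\mu(K)=1<\infty$, and I would record at the outset the elementary inclusion $L\log L(K,\mu)\subset L^1(K,\mu)$ on the finite space $(K,\mu)$, which follows from the pointwise bound $|f|\le e+|f|\log^+|f|$.

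For the implication that $f\in L\log L(K,\mu)$ forces $M_{\mathcal{D}}^\mu f\in L^1(K,\mu)$, I would simply invoke Theorem \ref{MR3}. By definition, the hypothesis $f\in L\log L(K,\mu)$ reads $\int_K|f|\log^+|f|\,d\mu<\infty$, which is precisely the assumption required there. Theorem \ref{MR3} then delivers
\[
\int_K M_{\mathcal{D}}^\mu f(x)\,d\mu\le 2\mu(K)+8\int_K|f(x)|\log^+|f(x)|\,d\mu<\infty,
\]
whence $M_{\mathcal{D}}^\mu f\in L^1(K,\mu)$, as wanted.

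For the converse, suppose $f\in L^1(K,\mu)$ and $M_{\mathcal{D}}^\mu f\in L^1(K,\mu)$. These are exactly the hypotheses of Theorem \ref{t16}, whose conclusion is that $|f|\log^+|f|\in L^1(K,\mu)$, that is, $f\in L\log L(K,\mu)$; this closes the equivalence. Consequently, the genuine difficulty does not reside in this theorem at all: once Theorems \ref{MR3} and \ref{t16} are available, each direction is a one-line implication. The real obstacles — handling the nonlinearity of the Choquet integral, adapting the dyadic decomposition to the basic cubes $\mathcal{D}$, and proving the sharp distributional estimates underlying both $L\log L$ bounds — are all confined to the proofs of those two theorems. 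The only point meriting a word of care is checking that the hypotheses in the present statement align exactly with those of the two theorems, together with the inclusion $L\log L(K,\mu)\subset L^1(K,\mu)$ noted above, which guarantees that the standing assumption $f\in L^1(K,\mu)$ is automatically compatible with membership in $L\log L(K,\mu)$.
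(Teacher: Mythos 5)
Your proposal is correct and matches the paper exactly: the paper states this theorem as an immediate consequence of Theorems \ref{MR3} and \ref{t16}, which is precisely the two-line synthesis you give. Nothing further is needed.
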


Finally, let us outline the structure of the paper. In Section \ref{s2}, we recall some basic facts on self-similar measures and prove some properties on Hausdorff contents and Choquet integrals, which are needed in our proofs. Section \ref{s3} is devoted to the proofs of Theorems \ref{MR1} and \ref{MR2},
and Corollaries \ref{weaki} and \ref{pp}. Finally, the proofs of Theorem \ref{lpc}, Propositions \ref{MR3} and \ref{t16} are given in Section \ref{s4}.

\section{Preliminaries}\label{s2}
\subsection{Self-similar sets and measures}In this subsection, we collect some basic properties of self-similar sets and measures that are needed in this paper. Many fractals are made up of parts that are, in some way, similar to the whole. For example, the middle third Cantor set is the union of two similar copies of itself. These fractals are called self-similar sets and can be obtained by IFSs. The following lemma, due to Hutchinson \cite{Hutch}, guarantees the existence of self-similar set $K$ and self-similar measure $\mu$ on it.

\begin{lemma}
Let $\{S_i\}_{i=1}^m$ be contractive similitudes with contraction factors $r_1, \ldots, r_m$ on $\R^d$. Then
\begin{enumerate}[(1)]
 \item [\textup{(i)}] There exists a unique nonempty compact set $K\subset \R^d$  which satisfies
 \begin{equation}\label{sss}
 K=\bigcup_{i=1}^m S_i(K).
 \end{equation}

\item [\textup{(ii)}]  For any probability vector $\textbf{p}=(p_1,\ldots, p_m)$, there is a unique probability measure $\mu=\mu_\textbf{p}$  on $K$ such that
    \[
    \mu=\sum_{i=1}^m p_i \mu\circ S_i^{-1}.
    \]
 If $p_i>0$ for all $1\le i\le m$, then $\supp \mu =K.$
 \end{enumerate}
\end{lemma}


Iterating invariant equation \eqref{sss} we obtain
\[
K=\bigcup_{n=1}^\infty\bigcup_{i_1i_2\ldots i_n\in \Sigma_n}S_{i_1}\circ S_{i_{2}}\circ\cdots \circ S_{i_n}(K)=\bigcup_{n=1}^\infty\bigcup_{\textbf{i}\in \Sigma_n}K_{\textbf{i}}.
\]
Suppose that $\{S_1, \ldots, S_m\}$ is an similar IFS on $\R^d$ and $K$ is the corresponding self-similar set. Let $\Sigma_\infty:=\{1,\ldots, m\}^\N$ be the set of all infinite sequences with values in $\{1,\ldots, m\}$. For $\mathbf{i}=i_{1}i_{2}\ldots \in \Sigma_\infty$ and a
positive integer $n$, let $\mathbf{i}|_n=i_{1}\ldots i_{n}$
denote the truncation of $\mathbf{i}$ to the $n$th place.

It is well known (see \cite{Fal,Hutch}) that there exists a continuous and surjective mapping $\Pi: \Sigma_\infty\to K$ satisfying
\begin{equation*}
\Pi (\textbf{i})=\bigcap_{n=1}^\infty K_{\textbf{i}|_n}.
\end{equation*}
Recall that $K_\textbf{i}=S_\textbf{i}(K)$.
If additionally, all $S_1,\ldots,S_m $ are injective and $S_i(K)\cap S_j(K)=\emptyset$ for all $i\not= j$, then $\Pi$ is a homeomorphism.

We give some comments and geometric intuition on the self-similar set $K$ satisfying \eqref{sss}. Let $\mathcal{S}(\R^d)$ denote the collection of nonempty compact
subsets of $\R^d$. For $\delta>0$ and $A\subset \R^d$, the $\delta$-neighbourhood of $A$ is the set of points within distance $\delta$ of $A$, i.e.,
\[
A_\delta:=\{x\in X: \text{$d(x, a)< \delta$ for some $a\in A$}\}.
\]
 For two sets $A, B\in \mathcal{S}(\R^d)$, define the \textit{Hausdorff distance} by
\[
\d_H(A, B):=\inf\{\delta>0: \text{$A\subset B_\delta$ and $B\subset A_\delta$}\}.
\]
In other words, the Hausdorff distance between two sets indicates how much
both sets must be enlarged around their periphery in order to contain each
other. Informally, two sets are close in the Hausdorff distance if every point of either set is close to some point of the other set.

Define a self map $S$ on $(\mathcal{S}(\R^d),\d_H)$ by
\begin{equation}\label{zys}
S(E)=\bigcup_{i=1}^m S_i(E),\quad E\in \mathcal{S}(\R^d).
\end{equation}
For any set $E\subset \mathcal{S}(\R^d)$, it follows from Banach-fixed point theorem that $S^n(E)\to K$ in the Hausdorff distance as $n\to \infty,$ where $S^n$ is the $n$-th iterate of $S$ which is defined by \eqref{zys}. Furthermore, if $E\in \mathcal{S}(\R^d)$ satisfying
\[
\text{$S_i(E)\subset E$ for all $i=1,\ldots,m$,}
\]
then $S^{n+1}(E)\subset S^n(E)$ for all $n\ge 1.$ Hence, $\{S^n(E)\}$ forms a decreasing sequence of non-empty compact sets and the intersection is not empty. Write
 \[
 K'=\bigcap_{n=0}^\infty S^n(E).
 \]
We claim that $K'$ is nothing but the self-similar set $K$. In fact,
\[
S(K')=S\left(\bigcap_{n=0}^\infty S^n(E)\right)
     =\bigcap_{n=0}^\infty S(S^n(E))=\bigcap_{n=1}^\infty S^n(E)
     =K'.
\]
It follows from the uniqueness  that $K'=K.$  Thus,  the sets $\{S^n(E)\}$ provide increasingly good approximations to $K$, and the set $S^n(E)$ are sometimes called the \textit{$n$-generation} of $K$.

%

For an IFS $\{S_j\}_{j=1}^m$ of contractive similitudes, define
\[
\mathcal{C}_K=\bigcup_{i,j\in \{1,\ldots,m\}, i\not=j}(S_i(K)\cap S_j(K)), \quad \mathcal{C}=\Pi^{-1}(\mathcal{C}_K).
\]
Following \cite{Kig}, we say that $K$ is a \emph{post-critically finite (p.c.f. for short)} self-similar set if $\mathcal{P}=\cup_{n=1}^\infty \sigma^n(\mathcal{C})$ is a finite set, where $\sigma$ is the shift operator on $\Sigma_\infty$ such that $\sigma(i_1,i_2,\ldots,i_k,\ldots)=(i_2,\ldots,i_k,\ldots)$. Clearly, if a self-similar set satisfies the SSC, then $\mathcal{C}_K=\emptyset$  and hence it is a p.c.f. self-similar set. Any non-degenerate closed interval (i.e., not a singleton), the Sierpinski gasket and Vicsek set are p.c.f. self-similar sets. Moreover, the nested fractals defined by Lindstr{\o}m \cite{Lin} are p.c.f. self-similar sets.

We point out that the p.c.f. self-similar sets $K$ under consideration include many well-known fractals. Thus, the main results in this paper can be applied to these fractals. Here we give several examples.

\begin{Example}[Symmetric Cantor set]
 Let $S_1, S_2:\ \R\to \R$ be given by
\[
S_1(x)=\frac{1}{3}x~~~\text{and}~~~ S_2(x)=\frac{1}{3}x+\frac{2}{3}.
\]
The self-similar set $C$ of the IFS  $\{S_1, S_2\}$ satisfies $C=S_1(C)\cup S_2(C)$. The set $C$ is called the middle-third Cantor set and $\hdim (C)=\log 2/\log 3$. In fact, the middle-third Cantor set is a special symmetric Cantor set, which is the self-similar set defined as follows. Let $0<\lambda<\frac{1}{2}$ and define the IFS  $\{S_1, S_2\}$ by $S_1(x)=\lambda x, S_2(x)=\lambda x+(1-\lambda)$. The corresponding self-similar set is referred to as symmetric Cantor set, and its Hausdorff dimension is $\frac{\log 2}{-\log \lambda}.$
\end{Example}

\begin{figure}[!htb]
    \centering
\vspace{-0.8em}
    \includegraphics[width=10cm]{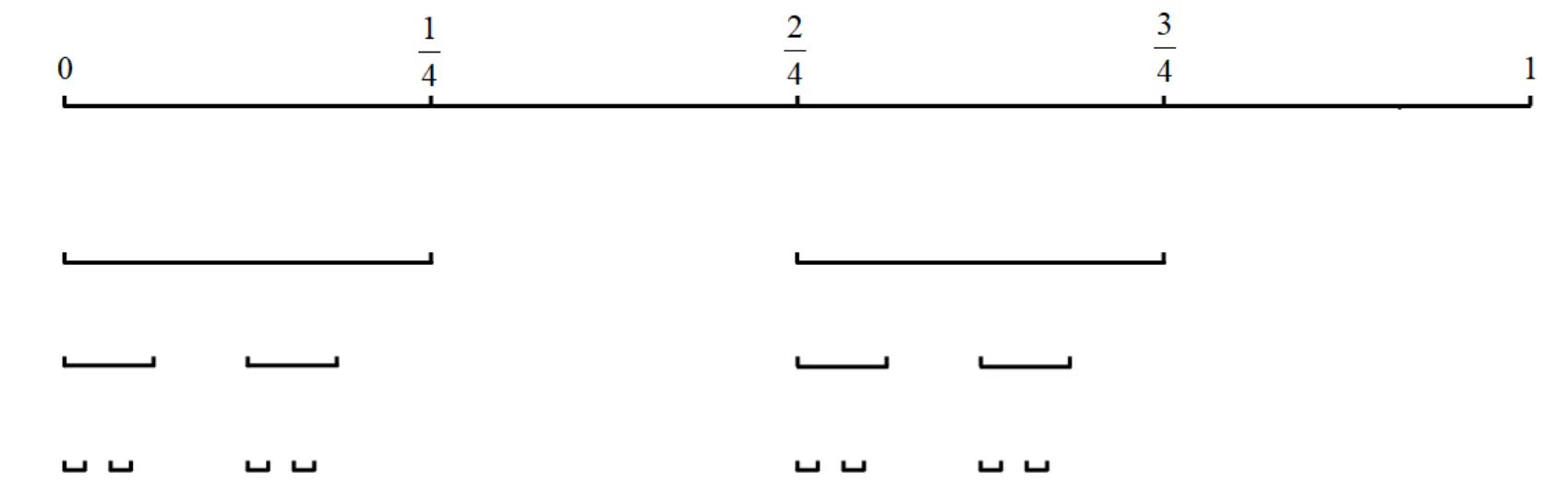}
\caption{Three generations of the middle-forth Cantor set.}
\end{figure}

\begin{Example}[Middle-forth Cantor set]
Let $S_1, S_2:\R\to \R$ be given by
\[
S_1(x)=\frac{1}{4}x~~~\text{and}~~~S_2(x)=\frac{1}{4}x+\frac{2}{4}.
\]
The corresponding self-similar set $K$ is called the middle-forth Cantor set and $s=\hdim (K)=1/2$.
\end{Example}

\begin{Example}[Generalized Sierpi\'{n}ski carpet]
Let $0<\lambda<\frac{1}{2}$ be a real number. Consider the IFS consisting of the contractions $\{S_i\}_{i=1}^4$  defined by
\[
S_1(x,y)=\lambda\begin{pmatrix}\!x\!\\[-1pt]\!y\!\end{pmatrix},~~ S_2(x,y)=\lambda\begin{pmatrix}\!x\!\\[-1pt]\!y\!\end{pmatrix}+\begin{pmatrix}\!1-\lambda\!\\[-1pt]\!0\!\end{pmatrix},
\]
and
\[
S_3(x,y)=\lambda\begin{pmatrix}\!x\!\\[-1pt]\!y\!\end{pmatrix}+\begin{pmatrix}\!0\!\\[-1pt]\!1-\lambda\!\end{pmatrix},~~
S_4(x,y)=\lambda\begin{pmatrix}\!x\!\\[-1pt]\!y\!\end{pmatrix}+\begin{pmatrix}\!1-\lambda\!\\[-1pt]\!1-\lambda\!\end{pmatrix}.
\]
The corresponding self-similar set $K$, called the generalized Sierpi\'{n}ski carpet, has Hausdorff dimension $s=\frac{2\log 2}{-\log \lambda}$. In fact, $K$ can be regarded as the product of two symmetric Cantor sets in $\R.$
\end{Example}

\begin{figure}[!htb]
    \centering
\makebox[\textwidth][c]{\includegraphics[width=10cm]{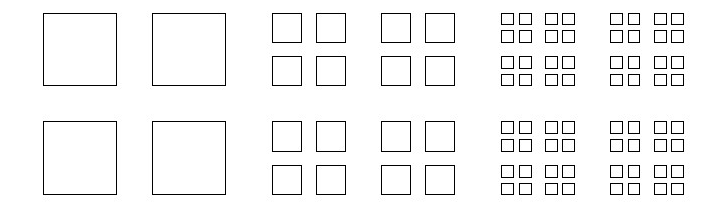}}
\caption{Three generations of the generalized Sierpi\'{n}ski carpet.}
\end{figure}
The above three examples satisfy the SSC, and the following three examples are p.c.f. self-similar sets which satisfy the \emph{open set condition} (OSC), i.e., there exists a non-empty bounded
open set $V$ such that $S_{i}(V)\cap S_{j}(V)=\emptyset$ for all $i\neq j$ and
$\bigcup_{i=1}^m S_{i}(V)\subset V.$
Clearly, the SSC implies the OSC.

\begin{Example}[Unit interval]\label{UI}
Let $I=[0,1]$. It is the self-similar set corresponding the IFS $\{S_i\}_{i=1}^2$ with $$S_1(x)=\frac{1}{2}x~~~\text{and}~~~S_2(x)=\frac{1}{2}(x+1).$$ Clearly, $\mathcal{C}_I=\{\frac{1}{2}\}$ and $\mathcal{C}=\Pi^{-1}(\mathcal{C}_I)=\{(1\dot{2}),(2\dot{1})\}$. So, $\mathcal{P}=\{(\dot{1}),(\dot{2})\}$. Here we write $\dot{s}=(s,s,\ldots)$. It is not difficult to check that a self-similar measure on $I$ is doubling if and only if it is the Lebesgue measure restricted on $I.$
\end{Example}

\begin{figure}[!htb]

\centering
\makebox[\textwidth][c]{\includegraphics[width=8cm]{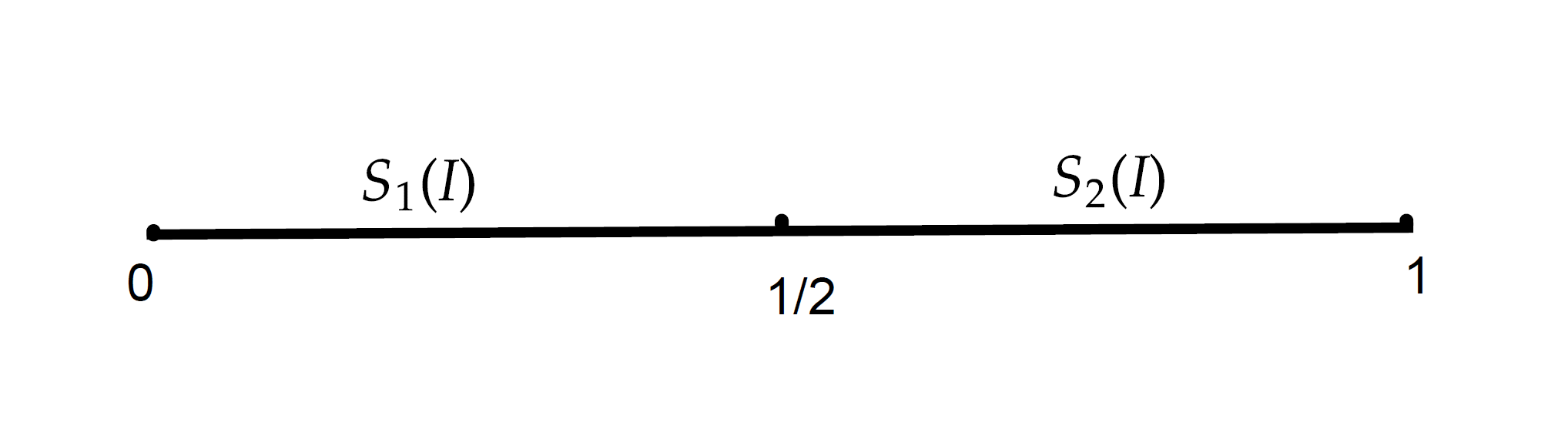}}
\caption{ The unit interval can be regarded as a self-similar set.}

\end{figure}

\begin{Example}[Sierpi\'{n}ski gasket]
The \textit{Sierpi\'{n}ski gasket} (also called Sierpi\'{n}ski triangle) is the self-similar set associated with the IFS consisting of the contractions $\{S_1, S_2,S_3\}$  defined by
\[
\text{$S_1(x,y)=\tfrac{1}{2}\begin{pmatrix}\!x\!\\[-1pt]\!y\!\end{pmatrix}, S_2(x,y)=\tfrac{1}{2}\begin{pmatrix}\!x\!\\[-1pt]\!y\!\end{pmatrix}+\begin{pmatrix}\!\tfrac{1}{2}\!\\[-1pt]\!0\!\end{pmatrix}$ and $S_3(x,y)=\tfrac{1}{2}\begin{pmatrix}\!x\!\\[-1pt]\!y\!\end{pmatrix}+\begin{pmatrix}\!\tfrac{1}{4}\!\\[-1pt]\!\tfrac{\sqrt{3}}{4}\!\end{pmatrix}$.}
\]
The Hausdorff dimension of the Sierpi\'{n}ski gasket is $s=\log 3/\log 2.$  A self-similar measure on the Sierpi\'{n}ski gasket is doubling if and only if it is the canonical one, i.e., the corresponding probability vector is $\textbf{p}=(1/3,1/3,1/3)$; see \cite[Proposition 1.3]{Yung}.
\end{Example}

\begin{figure}[!htb]

\centering
\makebox[\textwidth][c]{\includegraphics[width=10cm]{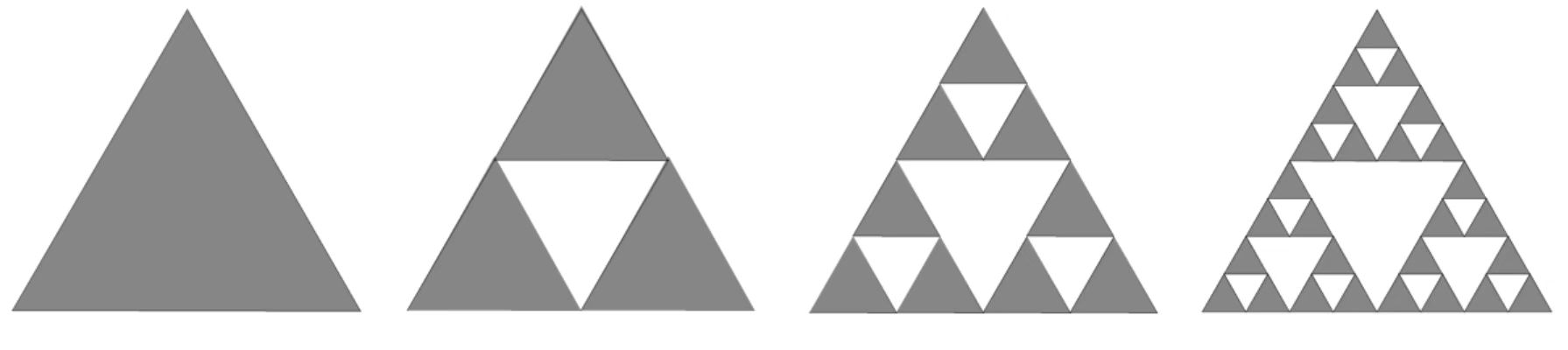}}
\caption{ The first three approximations of the Sierpi\'{n}ski gasket.}

\end{figure}

\begin{Example}[Vicsek set]
The \textit{Vicsek set} is the self-similar set associated with the IFS consisting of the contractions $\{S_i\}_{i=1}^5$  defined by
\[
S_i\begin{pmatrix}\!x\!\\[-1pt]\!y\!\end{pmatrix}=\frac{1}{3}\begin{pmatrix}\!x\!\\[-1pt]\!y\!\end{pmatrix}+\textbf{t}_i,
\]
where
\[
\textbf{t}_1=\begin{pmatrix}\!0\!\\[-1pt]\!0\!\end{pmatrix}, \textbf{t}_2=\begin{pmatrix}\!0\!\\[-1pt]\!\tfrac{2}{3}\!\end{pmatrix}, \textbf{t}_3=\begin{pmatrix}\!\tfrac{2}{3}\!\\[-1pt]\!0\!\end{pmatrix}, \textbf{t}_4=\begin{pmatrix}\!\tfrac{2}{3}\!\\[-1pt]\!\tfrac{2}{3}\!\end{pmatrix}, \textbf{t}_5=\begin{pmatrix}\!\tfrac{1}{3}\!\\[-1pt]\!\tfrac{1}{3}\!\end{pmatrix}.
\]
For the unit square $Q=[0,1]^2$ we have $S_i(Q)\subset Q$ for all $i=1,\ldots,5.$ The level-1, level-2, level-3 and level-4 cylinder squares of the Vicsek set are shown in Figure \ref{Vset}. The Hausdorff dimension of the Vicsek set is $s=\log 5/\log 3.$
\end{Example}

\begin{figure}[!htb]
\centering
\makebox[\textwidth][c]{\includegraphics[width=10cm]{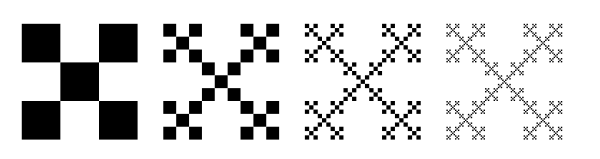}}
\caption{The first four approximations of the Vicsek set.}
\label{Vset}
\end{figure}

We collect some basic facts that we need in the sequel.
\begin{enumerate} [(1)]
\item  If $K$ is a self-similar set on $\R^d$ satisfying the SSC (or the OSC), then $\hdim (K)=s,$ where
\begin{equation}\label{ssd}
\sum_{i=1}^mr_i^s=1.
 \end{equation}
Here and in the sequel $\hdim (E)$ denotes the Hausdorff dimension of $E.$ We remark that a  p.c.f. self-similar set does not necessarily satisfy the OSC, see \cite{KT}.  Although generally there is no explicit formula for its Hausdorff dimension,  we always have $\hdim (K)\le s$ ($s$ is given by \eqref{ssd}) and many of them indeed satisfy the OSC (see \cite{DL}) and hence in this case their Hausdorff dimensions are exactly $s$.  For the definition and properties on Hausdorff dimension, we refer the reader to the famous book \cite{Fal}.
\item Given two finite sequences $\textbf{i}=(i_1, i_2,\ldots, i_n)$ and $\textbf{j}=(j_1,j_2,\ldots,j_m)$, we abbreviate $\textbf{i}\textbf{j}=(i_1, i_2,\ldots, i_n, j_1,j_2,\ldots,j_m)$. Two finite sequences $\textbf{i}$ and $\textbf{j}$ are \textit{incomparable} if each is not a prefix of the other, i.e., there is no $\textbf{k}\in \Sigma^*$ such that $\textbf{i}=\textbf{j}\textbf{k}$ or $\textbf{j}=\textbf{i}\textbf{k}$. For example, $\textbf{i}=(122)$ and $\textbf{j}=(11)$ are incomparable, but $\textbf{i}=(122)$ and $\textbf{j}=(12)$ are not incomparable. If $K$ is a p.c.f. self-similar set, then  $\mu(K_\textbf{i}\cap K_\textbf{j})=0$ if $\textbf{i}, \textbf{j}$ are incomparable. In particular, $\mu(S_i(F)\cap S_j(F))=0$ if $1\le i\not=j\le m.$
\item  If $K$ is a p.c.f. self-similar set, then $\mu(K_\textbf{i})=p_{\textbf{i}}:=p_{i_1}p_{i_2}\ldots p_{i_n}$ for any $\textbf{i}=i_1i_2\ldots i_n\in \Sigma^*.$
\end{enumerate}
For the proofs of $(2)$ and $(3)$, we refer the reader to \cite{Kig}.

The following lemma shows that the ``overlap part" of the p.c.f. self-similar set has zero Huasdorff content.
\begin{lemma}\label{pcfb}
Assume that $K$ is a p.c.f. self-similar set with Hausdorff dimension $s$ and  $\mu$ is a self-similar measure on it. For $0<\alpha\le s,$  we have $H^\alpha_\mu(S_\textbf{i}(K)\cap S_\textbf{j}(K))=0$ for any two distinct $\textbf{i}, \textbf{j}\in \Sigma_n.$
\end{lemma}
\begin{proof}
For two distinct $\textbf{i}, \textbf{j}\in \Sigma_n,$ it follows from \cite[Proposition 5.21]{Bar} that there exists a constant $C>0$ such that $\#(S_\textbf{i}(K)\cap S_\textbf{j}(K))\le C$. Here $\#(A)$ denotes the cardinality of the finite set $A$. Moreover, for any $x\in S_\textbf{i}(K)\cap S_\textbf{j}(K)$ there exists a decreasing basic cubes $\{K_{\textbf{i}_n}(x)\}_{n=1}^\infty\subset \mathcal{D}$ such that $\{x\}=\cap_{n=1}^\infty K_{\textbf{i}_n}(x).$ Hence,
\[
\text{$H^\alpha_\mu(S_\textbf{i}(K)\cap S_\textbf{j}(K))\le C \mu(K_{\textbf{i}_n}(x))^{\alpha/s}$ for any $n\ge 1$.}
\]
Since $\mu(K_{\textbf{i}_n}(x))=p_{\textbf{i}_n}\to 0$ as $n\to \infty$, the proof of the lemma is complete.
\end{proof}
The following example shows that Lemma \ref{pcfb} is not true for self-similar set with the OSC. Hence, we don't know whether Theorems \ref{MR1}, \ref{MR2} and \ref{lpc} are true for self-similar sets with the OSC.

\begin{Example}
Consider
\[
S_i\begin{pmatrix}\!x\!\\[-1pt]\!y\!\end{pmatrix}=\frac{1}{2}\begin{pmatrix}\!x\!\\[-1pt]\!y\!\end{pmatrix}+\textbf{t}_i,
\]
where
\[
\textbf{t}_1=\begin{pmatrix}\!0\!\\[-1pt]\!0\!\end{pmatrix}, \textbf{t}_2=\begin{pmatrix}\!0\!\\[-1pt]\!\tfrac{1}{2}\!\end{pmatrix}, \textbf{t}_3=\begin{pmatrix}\!\tfrac{1}{2}\!\\[-1pt]\!0\!\end{pmatrix}, \textbf{t}_4=\begin{pmatrix}\!\tfrac{1}{2}\!\\[-1pt]\!\tfrac{1}{2}\!\end{pmatrix}.
\]
It is easy to check that the IFS $\{S_i\}_{i=1}^4$ satisfies the OSC and the corresponding self-similar set is $K=[0,1]\times [0,1].$  Let $\mu$ be a self-similar measure on $K$ associated with the probability vector $(1/2,1/6,1/6,1/6)$. Clearly, $s=\hdim K=2$ and
\[
S_1(K)\cap S_2(K)=\left\{\frac{1}{2}\right\}\times \left[0,\frac{1}{2}\right].
\]
To cover $S_1(K)\cap S_2(K)$ by basic cubes in level $n,$ we need at least $2^{n-1}$ such basic cubes. Therefore,
\[
H^\alpha_\mu(S_1(K)\cap S_2(K))\ge 2^{n-1} \left(\frac{1}{6^n}\right)^{\alpha/s}=2^{n-1}\left(\frac{1}{6^n}\right)^{\alpha/2},
\]
which does not tend to zero if we choose proper $0<\alpha<2$ (e.g., $\alpha<2\log 2/\log 6$). This implies that $H^\alpha_\mu(S_1(K)\cap S_2(K))\not=0$ for some $\alpha.$
\end{Example}

\subsection{Hausdorff contents and Choquet integrals}
In this subsection, we show some lemmas on Hausdorff contents and Choquet integrals, which are required to show main theorems.
We call two basic cubes $Q,R\in \mathcal{D}$ \emph{essential disjoint} if $H_\mu^\alpha(Q\cap R)=0$. 

In the sequel, we will frequently use the following lemma.
\begin{lemma}\label{bjzl}
Assume that $0<\alpha\le s$ and $E\subset K$ with $H_\mu^\alpha(E)<\infty$. For any $\varepsilon>0,$ there exist a sequence of essential disjoint basic cubes $\{K_{\textbf{j}}\}_{\textbf{j}\in \Omega}$ with $\Omega\subset \Sigma^*$ such that $E\subset \bigcup_{\textbf{j}\in \Omega}K_{\textbf{j}}$ and
\[
\sum_{\textbf{j}\in \Omega}\mu(K_{\textbf{j}})^{\alpha/s}< H_\mu^\alpha(E)+\varepsilon.
\]
\end{lemma}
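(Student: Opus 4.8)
The plan is to reduce the claim to a standard covering-refinement argument exploiting the tree structure of the basic cubes $\mathcal{D}$. By the definition of the Hausdorff content $H_\mu^\alpha(E)$ as an infimum, for any $\varepsilon>0$ I may first choose some countable cover $E\subset\bigcup_j I_j$ with $I_j\in\mathcal{D}$ satisfying $\sum_j\mu(I_j)^{\alpha/s}<H_\mu^\alpha(E)+\varepsilon$. This cover is in general not disjoint, so the whole content of the lemma is to replace it by a \emph{pairwise disjoint} subcover that is no larger in the relevant sum. The key structural fact I would invoke is that, under the SSC, any two basic cubes $K_{\mathbf{i}},K_{\mathbf{j}}\in\mathcal{D}$ are either nested (one contains the other, when $\mathbf{i}$ and $\mathbf{j}$ are comparable) or essentially disjoint in the sense that $\mu(K_{\mathbf{i}}\cap K_{\mathbf{j}})=0$ (when they are incomparable); this is exactly property (2) recorded in Section~2.1. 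Thus the cubes from my chosen cover form a subset of the full $m$-ary tree, and nesting is governed by the prefix order on $\Sigma^*$.

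Given this, the refinement is a \emph{maximality} (or \emph{minimal-cube}) selection on the tree. From the family $\{I_j\}$ I would retain only those cubes that are \textbf{not strictly contained} in any other cube of the family; call this subfamily $\{K_{\mathbf{j}}\}_{\mathbf{j}\in\Omega}$. Because the tree has finite branching and every chain of nested basic cubes has strictly decreasing length of index, each $I_j$ is contained in at least one such maximal cube, so $E\subset\bigcup_j I_j\subset\bigcup_{\mathbf{j}\in\Omega}K_{\mathbf{j}}$ and the cover property is preserved. By maximality, no two retained cubes are comparable, hence any two are incomparable and therefore disjoint (their intersection is $\mu$-null, and since distinct basic cubes are in fact genuinely disjoint under the SSC, they are literally disjoint). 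This gives the disjointness required. The monotonicity of the sum is immediate: since $\{K_{\mathbf{j}}\}_{\mathbf{j}\in\Omega}$ is a \emph{sub}family of $\{I_j\}$, we have $\sum_{\mathbf{j}\in\Omega}\mu(K_{\mathbf{j}})^{\alpha/s}\le\sum_j\mu(I_j)^{\alpha/s}<H_\mu^\alpha(E)+\varepsilon$.

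The main obstacle I anticipate is purely bookkeeping around the selection of maximal cubes: I must verify that the ``retain the maximal cubes'' operation is well defined on a possibly infinite family. The subtlety is that an infinite family could in principle contain an infinite descending chain, but this cannot happen since length strictly decreases along any nesting chain and lengths are nonnegative integers; so every cube sits under a genuinely maximal one. One should also confirm that after discarding a non-maximal $I_j$, the point coverage is not lost, which follows because $I_j$ lies inside the maximal cube above it. A minor technical point worth stating carefully is that comparability of indices is equivalent to nesting of cubes, which uses injectivity of the $S_{\mathbf{i}}$ together with the SSC; I would cite property (2) of Section~2.1 for this. With these observations in place the argument is short, and the finite-branching tree structure does all the work in converting an arbitrary content-near-optimal cover into a disjoint one.
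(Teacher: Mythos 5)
Your proposal is correct and follows essentially the same route as the paper: pass to a near-optimal cover by basic cubes and then retain only the maximal (containment-wise) cubes, using the nested-or-disjoint tree structure of $\mathcal{D}$ under the SSC to get disjointness while the sum can only decrease. Your extra check that every cube lies under a genuinely maximal one (since index length strictly decreases going up a nesting chain) is a point the paper leaves implicit.
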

\begin{proof}
For any $\varepsilon>0,$ by the definition of the Hausdorff content there exists a sequence of basic cubes, denoted by $\{K_{\textbf{i}}\}_{\textbf{i}\in \Lambda}$, such that $E\subset \bigcup_{\textbf{i}\in \Lambda}K_{\textbf{i}}$ and \[
\sum_{\textbf{i}\in \Lambda}\mu(K_{\textbf{i}})^{\alpha/s}< H_\mu^\alpha(E)+\varepsilon.
 \]
It follows from  the construction of the basic cubes that if $Q, R\in \{K_{\textbf{i}}\}_{\textbf{i}\in \Lambda}$ and $Q\cap R\not=\emptyset$, then $Q\cap R$ has only finitely many points, or $Q\subset R$ or $Q\supset R$. Now, we take a subsequence from $\{K_{\textbf{i}}\}_{\textbf{i}\in \Lambda}$ in the following way.  If the intersection of two elements in $\{K_{\textbf{i}}\}_{\textbf{i}\in \Lambda}$ is nonempty and has only finitely many points, then we keep them; if one is contained in another then we delete the smaller one from $\{K_{\textbf{i}}\}_{\textbf{i}\in \Lambda}$.  Denote by $\{K_{\textbf{j}}\}_{\textbf{j}\in \Omega}$ the subsequence after the above operation. Clearly, it is disjoint and
\[
\sum_{\textbf{j}\in \Omega}\mu(K_{\textbf{j}})^{\alpha/s}\le \sum_{\textbf{i}\in \Lambda}\mu(K_{\textbf{i}})^{\alpha/s}< H_\mu^\alpha(E)+\varepsilon.
\]
The proof of the lemma is complete.
\end{proof}

The following result shows that the Hausdorff content $H_\mu^\alpha$ on self-similar set $K$ is strongly subadditive.
\begin{lemma}\label{ssuba}
Let $0<\alpha\le s$. For any sets $E_1, E_2\subset K,$ we have
\[
H_\mu^\alpha(E_1\cup E_2)+H_\mu^\alpha(E_1\cap E_2)\le H_\mu^\alpha(E_1)+H_\mu^\alpha(E_2).
\]
\end{lemma}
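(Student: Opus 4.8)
The plan is to exploit the \emph{laminar} (tree-like) structure of the basic cubes: by the strong separation condition, any two cubes in $\mathcal{D}$ are either disjoint or nested, exactly the observation already used in the proof of Lemma \ref{bjzl}. Since $K$ can be covered by the finitely many cubes of generation one, the content of every subset of $K$ is finite, so Lemma \ref{bjzl} is applicable. I would fix $\varepsilon>0$ and use Lemma \ref{bjzl} to choose two \emph{disjoint} families of basic cubes $\mathcal{A}=\{K_{\mathbf{j}}\}_{\mathbf{j}\in\Omega_1}$ and $\mathcal{B}=\{K_{\mathbf{j}}\}_{\mathbf{j}\in\Omega_2}$ covering $E_1$ and $E_2$ respectively, with $\sum_{A\in\mathcal{A}}\mu(A)^{\alpha/s}<H^\alpha_\mu(E_1)+\varepsilon$ and $\sum_{B\in\mathcal{B}}\mu(B)^{\alpha/s}<H^\alpha_\mu(E_2)+\varepsilon$. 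The goal is then to build, out of $\mathcal{A}\cup\mathcal{B}$, one cover of $E_1\cup E_2$ and one cover of $E_1\cap E_2$ whose total weights add up to no more than $\sum_{A\in\mathcal{A}}\mu(A)^{\alpha/s}+\sum_{B\in\mathcal{B}}\mu(B)^{\alpha/s}$.

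For the union I would take $\mathcal{U}$ to be the family of \emph{maximal} cubes of $\mathcal{A}\cup\mathcal{B}$ (those not strictly contained in any other cube of the combined family). Maximal elements of a laminar family are pairwise disjoint, every cube of $\mathcal{A}\cup\mathcal{B}$ lies under some maximal one, and hence $\mathcal{U}$ is a cover of $E_1\cup E_2$ by basic cubes. For the intersection I would take $\mathcal{I}$ to consist, for each pair $A\in\mathcal{A}$, $B\in\mathcal{B}$ with $A\cap B\neq\emptyset$, of the \emph{smaller} of the two (necessarily nested) cubes: if $x\in E_1\cap E_2$, then $x$ lies in some $A\in\mathcal{A}$ and some $B\in\mathcal{B}$, these intersect and so are nested, and the smaller one still contains $x$, whence $\mathcal{I}$ covers $E_1\cap E_2$. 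Concretely, after classifying each cube of $\mathcal{A}$ according to whether it equals, is strictly contained in, strictly contains, or is disjoint from the cubes of $\mathcal{B}$ (and symmetrically for $\mathcal{B}$), one sees that $\mathcal{U}$ is made of the ``equal'', ``strictly-containing'' and ``disjoint'' cubes, while $\mathcal{I}$ is made of the ``equal'' and ``strictly-contained'' cubes.

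The crux is then a bookkeeping identity. Using the disjointness of $\mathcal{A}$ and of $\mathcal{B}$ together with the laminar structure, each cube contributes its weight $\mu(\cdot)^{\alpha/s}$ equally often to both sides: a cube lying in both families is counted twice on each side (once as ``equal'' in $\mathcal{U}$, once in $\mathcal{I}$; and once in each of $\mathcal{A},\mathcal{B}$), a strictly-contained cube is counted once (in $\mathcal{I}$, and in its own family only), and a strictly-containing or disjoint cube is counted once (in $\mathcal{U}$, and in its own family only). Hence
\[
\sum_{C\in\mathcal{U}}\mu(C)^{\alpha/s}+\sum_{C\in\mathcal{I}}\mu(C)^{\alpha/s}=\sum_{A\in\mathcal{A}}\mu(A)^{\alpha/s}+\sum_{B\in\mathcal{B}}\mu(B)^{\alpha/s},
\]
and since $\mathcal{U}$ and $\mathcal{I}$ cover $E_1\cup E_2$ and $E_1\cap E_2$, the definition of the content yields $H^\alpha_\mu(E_1\cup E_2)+H^\alpha_\mu(E_1\cap E_2)<H^\alpha_\mu(E_1)+H^\alpha_\mu(E_2)+2\varepsilon$; letting $\varepsilon\to0$ finishes the proof. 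The main obstacle is purely combinatorial: arranging the classification so that coincident cubes (those lying in both $\mathcal{A}$ and $\mathcal{B}$) are treated without double- or under-counting, and confirming that $\mathcal{I}$ genuinely covers the intersection. It is worth noting that this scheme uses only the nested structure of $\mathcal{D}$ and works verbatim for any weight attached to the cubes; the exponent $\alpha/s$ and any concavity play no role, which is why one obtains an exact equality rather than merely an inequality between the two cover sums.
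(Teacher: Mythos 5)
Your proof is correct and follows essentially the same route as the paper's: cover $E_1$ and $E_2$ by disjoint families of basic cubes via Lemma \ref{bjzl}, take the maximal cubes of the combined family to cover the union and the pairwise intersections (each of which is the smaller of two nested cubes) to cover the intersection, and compare total weights. Your explicit case-by-case bookkeeping simply spells out the weight inequality that the paper asserts more briefly, and it checks out.
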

\begin{proof}
We may assume  that $H_\mu^\alpha(E_1)+H_\mu^\alpha(E_2)<\infty$. Fix $\varepsilon>0.$ It follows from Lemma \ref{bjzl} that there exist two sequences of essential disjoint  basic cubes $\{K_{\textbf{j}}\}_{\textbf{j}\in \Omega_1}$ and $\{K_{\textbf{i}}\}_{\textbf{i}\in \Omega_2}$ with $\Omega_1, \Omega_2\subset \Sigma^*$ such that $E_1\subset \bigcup_{\textbf{j}\in \Omega_1}K_{\textbf{j}}, E_2\subset \bigcup_{\textbf{i}\in \Omega_2}K_{\textbf{i}}$, and
\begin{equation}\label{cg}
\sum_{\textbf{j}\in \Omega_1}\mu(K_{\textbf{j}})^{\alpha/s}< H_\mu^\alpha(E_1)+\frac{\varepsilon}{2}, \quad   \sum_{\textbf{i}\in \Omega_2}\mu(K_{\textbf{i}})^{\alpha/s}< H_\mu^\alpha(E_2)+\frac{\varepsilon}{2}.
\end{equation}

 Let $\mathcal{B}$ be a maximal subfamily of the family $\{K_{\textbf{j}}, K_{\textbf{i}}\}_{\textbf{j}\in \Omega_1, \textbf{i}\in \Omega_2}$, in the sense that $E_1\cup E_2\subset \cup_{J\in \mathcal{B}} J$ and $I\cap J$ has at most finitely many points for any distinct $I,J\in \mathcal{B}$.
 Define
\[
 \mathcal{C}:=\{K_{\textbf{j}}\cap K_{\textbf{i}}: \textbf{j}\in \Omega_1, \textbf{i}\in \Omega_2\}.
\]
Clearly, $E_1\cap E_2\subset \bigcup_{Q\in \mathcal{C}}Q$.
 Note that $\mathcal{C}\subset \{K_{\textbf{j}}, K_{\textbf{i}}\}_{\textbf{j}\in \Omega_1, \textbf{i}\in \Omega_2}\cup\{\emptyset\}$ since the intersection of any two basic cubes is either has finitely many points or a smaller basic cube. Hence, noting $\mu$ is continuous we have

%

\[
\sum_{R\in\mathcal{B}}\mu(R)^{\alpha/s}+\sum_{Q\in\mathcal{C}}\mu(Q)^{\alpha/s}\le \sum_{\textbf{j}\in \Omega_1}\mu(K_{\textbf{j}})^{\alpha/s}+\sum_{\textbf{i}\in \Omega_2}\mu(K_{\textbf{i}})^{\alpha/s}.
\]
This, combined with  \eqref{cg}, implies that
\[
\begin{split}
H_\mu^\alpha(E_1\cup E_2)+H_\mu^\alpha(E_1\cap E_2)
&\le \sum_{R\in\mathcal{B}}\mu(R)^{\alpha/s}
+\sum_{Q\in\mathcal{C}}\mu(Q)^{\alpha/s}\\
&\le H_\mu^\alpha(E_1)+\frac{\varepsilon}{2}+H_\mu^\alpha(E_2)+\frac{\varepsilon}{2}\\
&=H_\mu^\alpha(E_1)+H_\mu^\alpha(E_2)+\varepsilon.
\end{split}
\]
Letting $\varepsilon\to 0$ completes the proof.
\end{proof}

With the help of Lemma \ref{bjzl}, we can show that the Hausdorff content $H_\mu^\alpha$ on self-similar set $K$ has the  upper semicontinuity.
\begin{proposition}\label{uppersc}
Let $0<\alpha\le s$. For any increasing sequence of sets $\{E_j\}_{j=1}^\infty\subset K$ with $E=\bigcup_{j=1}^\infty E_j$, we have
\[
H_\mu^\alpha(E)=\lim\limits_{j\to \infty}H_\mu^\alpha(E_j).
\]
\end{proposition}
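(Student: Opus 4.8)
The plan is to prove the nontrivial inequality $H_\mu^\alpha(E)\le L:=\lim_{j\to\infty}H_\mu^\alpha(E_j)$, since the reverse is immediate: each $E_j\subset E$ gives $H_\mu^\alpha(E_j)\le H_\mu^\alpha(E)$ by monotonicity, and as $\{E_j\}$ is increasing the limit $L$ exists in $[0,\infty]$ and satisfies $L\le H_\mu^\alpha(E)$. If $L=\infty$ there is nothing to prove, so I assume $L<\infty$, whence $H_\mu^\alpha(E_j)\le L<\infty$ for every $j$, and I fix $\varepsilon>0$. First I would replace each $E_j$ by an open outer approximation. By Lemma \ref{bjzl} there is a disjoint family of basic cubes covering $E_j$ whose content-sum is within $\varepsilon 2^{-j}$ of $H_\mu^\alpha(E_j)$; let $U_j$ be the union of these cubes. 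Since every basic cube is clopen in $K$ (the SSC makes $K=\bigsqcup_{i}S_i(K)$ a disjoint union of compact, hence relatively open, pieces, and this propagates to all of $\mathcal{D}$), each $U_j$ is an open subset of $K$ with $E_j\subset U_j$, and because those cubes also cover $U_j$ itself we get $H_\mu^\alpha(U_j)\le H_\mu^\alpha(E_j)+\varepsilon 2^{-j}$.

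The core step is a strong-subadditivity induction. Putting $V_n:=\bigcup_{j=1}^n U_j$, I claim $H_\mu^\alpha(V_n)\le H_\mu^\alpha(E_n)+\varepsilon\sum_{j=1}^n 2^{-j}$ for all $n$. The case $n=1$ is the previous line. For the inductive step I write $V_n=V_{n-1}\cup U_n$ and apply Lemma \ref{ssuba} to obtain
\[
H_\mu^\alpha(V_n)\le H_\mu^\alpha(V_{n-1})+H_\mu^\alpha(U_n)-H_\mu^\alpha(V_{n-1}\cap U_n).
\]
The key observation is that $E_{n-1}\subset V_{n-1}$ and $E_{n-1}\subset E_n\subset U_n$, so $E_{n-1}\subset V_{n-1}\cap U_n$ and hence $H_\mu^\alpha(V_{n-1}\cap U_n)\ge H_\mu^\alpha(E_{n-1})$ by monotonicity. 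Feeding this, the inductive hypothesis, and $H_\mu^\alpha(U_n)\le H_\mu^\alpha(E_n)+\varepsilon 2^{-n}$ into the displayed inequality makes the two $H_\mu^\alpha(E_{n-1})$ terms cancel and yields the claim; in particular $H_\mu^\alpha(V_n)\le H_\mu^\alpha(E_n)+\varepsilon\le L+\varepsilon$ for every $n$.

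It remains to pass to the limit. The sets $V_n$ increase to the open set $V:=\bigcup_n V_n\supset\bigcup_n E_n=E$, and the previous step furnishes the uniform bound $H_\mu^\alpha(V_n)\le L+\varepsilon$. If I can show $H_\mu^\alpha(V)\le\sup_n H_\mu^\alpha(V_n)$, then $H_\mu^\alpha(E)\le H_\mu^\alpha(V)\le L+\varepsilon$, and letting $\varepsilon\to 0$ completes the argument.

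I expect this last point, namely continuity from below along the increasing open sets $V_n$, to be the main obstacle: because $H_\mu^\alpha$ is merely an outer measure, one cannot simply merge the near-optimal covers of the individual $V_n$ into a cheap cover of $V$, since the overlaps are double-counted and the gain produced by strong subadditivity is lost. My plan to surmount this is to use inner approximation together with the compactness of $K$. Writing $V$ as the disjoint union of its maximal basic cubes, each finite subunion $C$ is a finite union of clopen cubes and hence compact; as $\{V_n\}$ is then an increasing open cover of $C$ one has $C\subset V_n$ for some $n$, so $H_\mu^\alpha(C)\le L+\varepsilon$. The remaining technical heart is to upgrade this uniform bound on the compact exhaustion $C\uparrow V$ to $V$ itself, which I would carry out by a compactness (diagonal) argument in the finitely branching tree $\mathcal{D}$, exploiting that at each finite depth only finitely many basic cubes occur and that the cover weights are lower semicontinuous under such limits.
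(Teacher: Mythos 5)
Your reduction is attractive and the first three steps are correct: the monotonicity direction, the clopen outer approximations $U_j$, and in particular the strong-subadditivity induction giving $H_\mu^\alpha(V_n)\le H_\mu^\alpha(E_n)+\varepsilon$ are all sound (the cancellation of the $H_\mu^\alpha(E_{n-1})$ terms is legitimate because $L<\infty$ forces these quantities to be finite). This is genuinely different from the paper, which never invokes strong subadditivity here and instead performs a direct bookkeeping argument with the maximal cubes of all the near-optimal covers simultaneously. However, your step 4 is a genuine gap, and it is not a peripheral one: passing from the uniform bound $H_\mu^\alpha(V_n)\le L+\varepsilon$ to $H_\mu^\alpha(V)\le L+\varepsilon$ is itself an instance of continuity from below for the outer measure $H_\mu^\alpha$, i.e.\ exactly the statement being proved. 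You have reduced the proposition to the special case of an increasing sequence of finite disjoint unions of basic cubes, but you do not close that case; knowing $H_\mu^\alpha(C)\le L+\varepsilon$ for every finite subunion $C$ of the maximal cubes $W_i$ of $V$ does not formally yield the bound for $V$, because $H_\mu^\alpha(C)$ can be strictly smaller than $\sum_{W_i\subset C}\mu(W_i)^{\alpha/s}$ (a single ancestor cube covering several $W_i$ is cheaper, since $(\sum_j a_j)^{\alpha/s}\le\sum_j a_j^{\alpha/s}$), so you cannot simply sum up.

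The diagonal argument you sketch can in fact be made to work, but it needs one enabling observation that you do not identify and that carries the real weight: in any cover of $C_N=\bigsqcup_{i\le N}W_i$ by disjoint basic cubes, the cubes lying strictly inside a given $W_i$ partition it, hence have total weight at least $\mu(W_i)^{\alpha/s}$ by superadditivity of $t\mapsto t^{\alpha/s}$ on disjoint unions; so near-optimal covers of $C_N$ may be taken among the ancestors-or-equals of the $W_i$. Only this confines the competing covers to a locally finite candidate set (each $W_i$ has finitely many ancestors), which is what lets the pigeonhole/diagonal extraction produce a limiting antichain that still \emph{covers} $V$; without it, the covering cubes over a fixed $W_i$ could recede to unbounded depth along the sequence and the limit object need not cover anything. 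Your remark that ``at each finite depth only finitely many basic cubes occur'' does not by itself rule this out, and lower semicontinuity of the weights only controls the cost of the limit cover, not its covering property. As written, the proof stops exactly where the paper's argument begins to do its work, so I cannot count the proposal as complete.
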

\begin{proof}
Due to the monotonicity of $H_\mu^\alpha$, we only need to show that
\begin{equation}\label{dzx}
H_\mu^\alpha(E)\le \lim\limits_{j\to \infty}H_\mu^\alpha(E_j).
\end{equation}
We may assume that $\lim\limits_{j\to \infty}H_\mu^\alpha(E_j)<\infty$. Given $\varepsilon>0$, for any $j\ge 1$, by Lemma \ref{bjzl} there exist essential disjoint sequences $\{Q_{j,k}\}_{k}\subset \mathcal{D}$ such that $E_j\subset \bigcup_{k}Q_{j,k}$ and
\begin{equation}\label{jbfg}
\sum_k\mu(Q_{j,k})^{\alpha/s}<H_\mu^\alpha(E_j)+\frac{\varepsilon}{2^j}.
\end{equation}
Let $\{R_i\}_i$ be the maximal basic cubes of $\{Q_{j,k}\}_{j,k}$. Clearly,
\[
E=\bigcup_{j=1}^\infty E_j\subset \bigcup_{j=1}^\infty \bigcup_kQ_{j,k}= \bigcup_i R_{i}.
\]
Subsequently, we classify $\{R_i\}_i$ into $\{R_i^{(j)}\}_i$, $j=1,2,\ldots$, according to the following criteria:
Let $\{R_i^{(1)}\}_i\subset\{R_i\}_i$ be taken from $\{Q_{1,k}\}_k$, and for $j\ge 2$, let
$
\{R_i^{(j)}\}_i\subset\{R_i\}_i$ be taken from $\{Q_{j,k}\}_k$, but not from $\{Q_{1,k}, \cdots,Q_{j-1,k}\}_k$. Then $\{R_i^{(j_1)}\}_i\cap \{R_i^{(j_2)}\}_i=\emptyset$ for any $j_1\neq j_2$, and some of the sets $\{R_i^{(j)}\}_i$ maybe be empty, but that is harmless for the following argument. Clearly,
\begin{equation}\label{fg}
E=\bigcup_{j=1}^\infty E_j\subset \bigcup_{j=1}^\infty \bigcup_kQ_{j,k}=\bigcup_{j=1}^\infty\bigcup_i R_i^{(j)}.
\end{equation}
For any $j\in \mathbb N$, define
\begin{equation}\label{nds}
\{Q_{j,k}^*\}_k:=\{Q_{j,k}\}_k\setminus \{R_i^{(j)}\}_i.
\end{equation}

Fix a sufficient large positive integer $m$. Let $S_1:=\bigcup_i R_i^{(1)}\cap E_1$. Since $S_1\subset E_1\subset E_m\subset \bigcup_k Q_{m,k}$,  there exists a subsequence of $\{Q_{m,k}\}_k$, denoted by $\{Q^{(1)}_{m,k}\}_k$, such that $S_1\subset\bigcup_k Q^{(1)}_{m,k}$. We may assume each $Q^{(1)}_{m,k}$ intersects $S_1$. By the definition of  $H_\mu^\alpha,$ we have  $H_\mu^\alpha(E_1)\le \sum_{k} \mu(Q^{(1)}_{m,k})^{\alpha/s}$. Note that
\[
E_1=S_1\cup (E_1\setminus S_1)\subset \left(\bigcup_k Q^{(1)}_{m,k}\right)\bigcup \left(\bigcup_k Q^{*}_{1,k}\right).
\]
By the definition of $H_\mu^\alpha,$ we have
\begin{equation}\label{ineq1x}
H_\mu^\alpha(E_1)\le \sum_{k} \mu(Q^{(1)}_{m,k})^{\alpha/s}+\sum_{k} \mu(Q^{*}_{1,k})^{\alpha/s}.
\end{equation}
Hence, it follows from \eqref{jbfg} and the definition of $\{R_i^{(1)}\}_i$ that
\[
\begin{aligned}
\sum_i \mu(R_i^{(1)})^{\alpha/s}&=\sum_k \mu(Q_{1,k})^{\alpha/s}-\sum_k \mu(Q_{1,k}^*)^{\alpha/s}&& \text{(by \eqref{nds})}\\
&\le H_\mu^\alpha(E_1)+\frac{\varepsilon}{2}-\sum_k \mu(Q_{1,k}^*)^{\alpha/s} && \text{(by \eqref{jbfg})}\\
&\le\sum_{k} \mu(Q^{(1)}_{m,k})^{\alpha/s}+\frac{\varepsilon}{2}&& \text{(by \eqref{ineq1x})}.
\end{aligned}
 \]

For $j=2$, let $S_2:=\left(\bigcup_i R_i^{(2)}\cap E_2\right)\setminus S_1.$ It follows from $S_2\subset E_2\subset E_m\subset \bigcup_k Q_{m,k}$ that there exists a subsequence of $\{Q_{m,k}\}_k\setminus \{Q^{(1)}_{m,k}\}_k$, denoted by $\{Q^{(2)}_{m,k}\}_k$, such that $S_2\subset\bigcup_k Q^{(2)}_{m,k}$. Again,  we may assume each $Q^{(2)}_{m,k}$ intersects $S_2$. Observe that
\[
E_2=S_2\cup (E_2\setminus S_2)\subset \left(\bigcup_k Q^{(2)}_{m,k}\right)\bigcup \left(\bigcup_k Q^{*}_{2,k}\right)
\]
By the definition of $H_\mu^\alpha,$ we have
\begin{equation}\label{ineq1}
H_\mu^\alpha(E_2)\le \sum_{k} \mu(Q^{(2)}_{m,k})^{\alpha/s}+\sum_{k} \mu(Q^{*}_{2,k})^{\alpha/s}.
\end{equation}
This, together with \eqref{jbfg} and \eqref{nds}, further implies that
\[
\begin{aligned}
\sum_i \mu(R_i^{(2)})^{\alpha/s}&=\sum_k\mu(Q_{2,k})^{\alpha/s}-\sum_k\mu(Q^*_{2,k})^{\alpha/s} &&\text{(by \eqref{nds})}\\
&\le H_\mu^\alpha(E_2)-\sum_k\mu(Q^*_{2,k})^{\alpha/s} +\frac{\varepsilon}{2^2}&& \text{(by \eqref{jbfg})}\\
&\le \sum_{k} \mu(Q^{(2)}_{m,k})^{\alpha/s}+\frac{\varepsilon}{2^2}&&\text{(by \eqref{ineq1})}.
\end{aligned}
\]

In general, given any $3\le j\le m$ and $$S_j:=\left(\bigcup_i R_i^{(j)}\cap E_j\right)\setminus \left(\bigcup_{i=1}^{j-1}S_i\right),$$ there exists a subsequence of $$\{Q_{m,k}\}_k\setminus \left(\bigcup_{i=1}^{j-1}\{Q^{(i)}_{m,k}\}_k\right),$$ denoted by $\{Q^{(j)}_{m,k}\}_k$, such that $S_j\subset \bigcup_k Q_{m,k}^{(j)}$ and each $Q^{(j)}_{m,k}$ intersects $S_j$ as well as
\[
\sum_i \mu(R_i^{(j)})^{\alpha/s}\le \sum_{k} \mu(Q^{(j)}_{m,k})^{\alpha/s}+\frac{\varepsilon}{2^j}.
\]
This inequality, combined with \eqref{fg}, gives
\[
\begin{split}
\sum_{j=1}^m \sum_i \mu(R_i^{(j)})^{\alpha/s}&\le \sum_{j=1}^m\left(\sum_k \mu(Q^{(j)}_{m,k})^{\alpha/s}+\frac{\varepsilon}{2^j}\right)
\le \sum_k \mu(Q_{m,k})^{\alpha/s}+\varepsilon \\
&\le H_\mu^\alpha(E_m)+2\varepsilon\quad \text{(by $\eqref{jbfg}$)}.
\end{split}
\]
Letting $m\to \infty$ and combining with \eqref{fg}, we have
\[
H_\mu^\alpha(E)\le \sum_{j=1}^\infty \sum_i \mu(R_i^{(j)})^{\alpha/s}\le \lim\limits_{m\to \infty}H_\mu^\alpha(E_m)+2\varepsilon.
\]
Finally, we obtain \eqref{dzx} by letting $\varepsilon\to 0$ and hence complete the proof.
\end{proof}



Using Lemma \ref{ssuba} and Proposition \ref{uppersc}, we build the following sublinearity of Choquet integral on self-similar set $K$
following the method in \cite{STW}. We present the proof here for the completeness.

\begin{proposition}\label{subL}
Let $0<\alpha\le s$. For a sequence of nonnegative functions $\{f_j\}_{j=1}^\infty$ defined on $K$, we have
\begin{equation}\label{sublinearity}
\int_K \sum_{j=1}^\infty f_j(x) dH_\mu^\alpha\le \sum_{j=1}^\infty\int_K f_j(x)dH_\mu^\alpha.
\end{equation}
\end{proposition}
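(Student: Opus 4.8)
The plan is to reduce the countable subadditivity \eqref{sublinearity} to a two-function statement and then pass to the limit using the upper semicontinuity already established in Proposition \ref{uppersc}. First I would prove the finite version: for any two nonnegative functions $f,g$ on $K$,
\[
\int_K (f+g)\,dH_\mu^\alpha\le \int_K f\,dH_\mu^\alpha+\int_K g\,dH_\mu^\alpha.
\]
The key tool here is the strong subadditivity from Lemma \ref{ssuba}. Indeed, for the layer-cake distribution functions one has the pointwise set inclusion
\[
\{f+g>t\}\subset \{f>t_1\}\cup\{g>t_2\}\quad\text{whenever } t_1+t_2=t,
\]
but the cleaner route is the classical ``superlevel set splitting'': for each $t>0$,
\[
\{x:f(x)+g(x)>t\}\subset \bigcup_{0<r<t}\bigl(\{f>r\}\cap\{g>t-r\}\bigr),
\]
so I would instead exploit strong subadditivity directly on the distribution functions. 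Concretely, writing $\lambda_h(t):=H_\mu^\alpha(\{h>t\})$, I would show $\int_0^\infty \lambda_{f+g}(t)\,dt\le \int_0^\infty \lambda_f(t)\,dt+\int_0^\infty \lambda_g(t)\,dt$ by a Fubini-type argument on the product $(0,\infty)\times(0,\infty)$, using Lemma \ref{ssuba} to control $H_\mu^\alpha$ of unions by sums at each level.

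Next I would iterate the two-function inequality to obtain the finite subadditivity for any partial sum $\sum_{j=1}^N f_j$:
\[
\int_K \sum_{j=1}^N f_j\,dH_\mu^\alpha\le \sum_{j=1}^N\int_K f_j\,dH_\mu^\alpha,
\]
which follows by a straightforward induction on $N$. Then I would let $N\to\infty$. Setting $g_N:=\sum_{j=1}^N f_j$, the sequence $\{g_N\}$ increases pointwise to $\sum_{j=1}^\infty f_j$, so the superlevel sets $\{g_N>t\}$ increase to $\{\sum_j f_j>t\}$ for each fixed $t$. Here Proposition \ref{uppersc} gives
\[
H_\mu^\alpha\Bigl(\Bigl\{\sum_{j=1}^\infty f_j>t\Bigr\}\Bigr)=\lim_{N\to\infty}H_\mu^\alpha(\{g_N>t\}),
\]
and by the monotone convergence theorem for the ordinary Lebesgue integral in $t$ (the distribution functions being monotone in $N$) I can interchange the limit with the integral $\int_0^\infty(\cdots)\,dt$ defining the Choquet integral, yielding
\[
\int_K \sum_{j=1}^\infty f_j\,dH_\mu^\alpha=\lim_{N\to\infty}\int_K g_N\,dH_\mu^\alpha\le \lim_{N\to\infty}\sum_{j=1}^N\int_K f_j\,dH_\mu^\alpha=\sum_{j=1}^\infty\int_K f_j\,dH_\mu^\alpha.
\]

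The main obstacle, and the step deserving the most care, is the base case of the two-function subadditivity, because the Choquet integral is genuinely nonlinear (difficulty (c) highlighted in the introduction) and ordinary additivity fails for outer measures. The honest mechanism that saves us is strong subadditivity (Lemma \ref{ssuba}): it upgrades the trivial subadditivity $H_\mu^\alpha(A\cup B)\le H_\mu^\alpha(A)+H_\mu^\alpha(B)$ to the sharper submodular inequality, which is exactly what is needed to keep the distribution-function estimate from losing a constant and to make the layered integral argument close cleanly. I would therefore present the level-set computation in full detail, making explicit where submodularity of $H_\mu^\alpha$ is invoked, and treat the passage to the countable sum via Proposition \ref{uppersc} as the routine limiting part. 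This matches the reference to the method of \cite{STW} cited just above the statement.
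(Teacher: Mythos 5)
Your overall architecture---prove the two-function inequality, iterate to finite sums by induction, then pass to the countable sum via Proposition \ref{uppersc} and monotone convergence in $t$---is exactly the paper's, and the induction and the limiting step are carried out there the same way you describe. The gap is in the base case, which is the entire content of the proposition and which you leave as an unverified sketch. The mechanisms you name do not close it: the inclusion $\{f+g>t\}\subset\{f>\lambda t\}\cup\{g>(1-\lambda)t\}$ combined with plain subadditivity only yields $\int_K(f+g)\,dH_\mu^\alpha\le\lambda^{-1}\int_K f\,dH_\mu^\alpha+(1-\lambda)^{-1}\int_K g\,dH_\mu^\alpha$, which always loses a constant; and the set $\bigcup_{0<r<t}\bigl(\{f>r\}\cap\{g>t-r\}\bigr)$ is an uncountable union of intersections to which neither subadditivity nor submodularity applies directly. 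Saying that submodularity ``is exactly what is needed to keep the estimate from losing a constant'' is the right slogan, but you never exhibit the level-set computation in which Lemma \ref{ssuba} actually enters, and there is no routine Fubini-type argument on $(0,\infty)\times(0,\infty)$ that produces the constant-$1$ inequality.

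The missing idea in the paper is a reduction to simple functions followed by a combinatorial rearrangement. Using \eqref{MCT} (a consequence of Proposition \ref{uppersc}) and positive homogeneity, one reduces to $f=\sum_{j=1}^m\chi_{F_j}$ and $g=\sum_{j=1}^m\chi_{G_j}$. The key observation is that if $E_k=\{x\in K:\sum_j\chi_{F_j}(x)\ge k\}$, then the sets $E_k$ are nested, $f=\sum_k\chi_{E_k}$, and the Choquet integral is \emph{exactly} additive over this nested decomposition, i.e.\ $\int_K f\,dH_\mu^\alpha=\sum_k H_\mu^\alpha(E_k)$, because $\{f>t\}=E_{k+1}$ for $t\in[k,k+1)$. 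The two-function inequality then becomes the purely set-theoretic statement \eqref{diq}: for arbitrary sets $A_1,\dots,A_m$ with level sets $B_j=\{x:\sum_k\chi_{A_k}(x)\ge j\}$ one has $\sum_j H_\mu^\alpha(B_j)\le\sum_j H_\mu^\alpha(A_j)$. This is proved by induction on $m$, with Lemma \ref{ssuba} supplying both the case $m=2$ (where $B_1=A_1\cup A_2$ and $B_2=A_1\cap A_2$) and the replacement step $(B_1,A_{\ell+1})\mapsto(B_1\cup A_{\ell+1},\,B_1\cap A_{\ell+1})$ inside the induction. If you supply this reduction and the induction on the number of sets, your outer structure goes through; without it, the proof is not complete.
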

\begin{proof}
It follows immediately from the definition of Choquet integral that
\begin{equation}\label{xsx}
\text{$\int_K cf(x)dH_\mu^\alpha=c\int_K f(x)dH_\mu^\alpha\ $ for any $c\ge 0$ and  $f\ge 0$. }
\end{equation}
Moreover, if $0\le f_j \nearrow f$, then for any $t>0,$
\[
\{x\in K: f_j(x)>t\}\nearrow\{x\in K: f(x)>t\}.
\]
By Proposition \ref{uppersc} and the monotone convergence theorem for Lebesgue integral, we have
\begin{equation}\label{MCT}
\lim\limits_{j\to \infty}\int_Kf_j(x)dH_\mu^\alpha=\int_Kf(x)dH_\mu^\alpha.
\end{equation}
To prove \eqref{sublinearity}, it suffices to show that for any nonnegative functions $f$ and $g$ we have
\begin{equation}\label{ckj}
\int_K [f(x)+g(x)]dH_\mu^\alpha\le \int_K f(x)dH_\mu^\alpha+\int_K g(x)dH_\mu^\alpha
\end{equation}
due to \eqref{MCT}.
By \eqref{MCT} again and the simple function approximation theorem, we only need to verify \eqref{ckj} for simple functions $f$ and $g$ whose values are the form $\{\varepsilon N\}$, $N\in\N$, for some $\varepsilon>0$. By this and \eqref{xsx}, we only need to prove \eqref{ckj} for the functions $f$ and $g$ of the form (allowing multiplicity)
\[
\text{$f=\sum_{j=1}^m \chi_{F_j}$ and $g=\sum_{j=1}^m\chi_{G_j}$}
\]
with $m\in \N$. Here and in the sequel,  we let $\chi_A$ denote the indicator function of $A\subset K$.

When $f$ has this form, we write
\[
E_k=\left\{x\in K:\ \sum_{j=1}^m \chi_{F_j}(x)\ge k\right\},~~~k= 1,2,\ldots, m.
\]
Then we can check that $f=\sum_{k=1}^m \chi_{E_k}$; and by the definition of Choquet integral we have
\[
\int_K f(x)dH_\mu^\alpha=\sum_{j=1}^m\int_K \chi_{E_k}(x)dH_\mu^\alpha.
\]

Hence, to show \eqref{ckj}, we only need to prove
\begin{equation}\label{diq}
\sum_{j=1}^m\int_K  \chi_{B_j}(x) dH_\mu^\alpha\le \sum_{j=1}^m\int_K \chi_{A_j}(x)dH_\mu^\alpha
\end{equation}
for any finite sets $A_1,\ldots,A_m\subset K$ , where
\[
B_j=\left\{x\in K:\ \sum_{k=1}^m \chi_{A_k}(x)\ge j\right\},~~~j= 1,2,\ldots,m.
\]

We next show \eqref{diq} by induction on $m$. Indeed, for $m=2$, then $B_1=A_1\cup A_2$ and $B_2=A_1\cap A_2$. In this case, applying Lemma \ref{ssuba}, we obtain \eqref{diq}. We now assume that \eqref{diq} is true for $m=\ell$. Then, it follows that
\begin{align}\label{eqx1}
\sum_{j=1}^{\ell+1}\int_K \chi_{A_j}(x)dH_\mu^\alpha
\ge \sum_{j=1}^\ell\int_K  \chi_{B_j}(x) dH_\mu^\alpha
+\int_K \chi_{A_{\ell+1}}(x)dH_\mu^\alpha.
\end{align}
By Lemma \ref{ssuba}, we find that
\begin{align}\label{eqx2}
\int_K  \chi_{B_1}(x) dH_\mu^\alpha
+\int_K \chi_{A_{\ell+1}}(x)dH_\mu^\alpha
\ge
\int_K  \chi_{C_1}(x) dH_\mu^\alpha
+\int_K \chi_{D}(x)dH_\mu^\alpha
\end{align}
with
$C_1=B_1\cup A_{\ell+1}=\cup_{k=1}^{\ell+1}A_k$ and $D=B_1\cap A_{\ell+1}=\cup_{k=1}^{\ell}(A_k\cap A_{\ell+1})$.
Applying the assumption for $m=\ell$, we deduce that
\begin{align*}
\sum_{j=2}^\ell\int_K  \chi_{B_j}(x) dH_\mu^\alpha
+\int_K \chi_{D}(x)dH_\mu^\alpha
\ge \sum_{j=2}^{\ell+1}\int_K  \chi_{C_j}(x) dH_\mu^\alpha,
\end{align*}
where
\[
C_j=\left\{x\in K:\ \sum_{k=2}^m \chi_{B_k}(x)+\chi_{D}(x)\ge j-1\right\},~~~j= 2,\ldots,m+1.
\]
Combining this with \eqref{eqx1} and \eqref{eqx2}, we conclude that
$$\sum_{j=1}^{\ell+1}\int_K \chi_{A_j}(x)dH_\mu^\alpha\ge
\sum_{j=1}^{\ell+1}\int_K  \chi_{C_j}(x) dH_\mu^\alpha.$$
This finishes the proof of \eqref{diq} and hence of Proposition \ref{subL}.
\end{proof}

Now we establish a key estimation on the maximal operator for indicator function as follows.

\begin{lemma}\label{tzfgj}
Let $0<\alpha\leq s$ and $\frac{\alpha}{s}<p<\infty$. For any $K_{\textbf{i}}\in \mathcal{D}$, we have
\[
\int_K \left[M_{\mathcal{D}}^\mu(\chi_{K_{\textbf{i}}})(x)\right]^p dH_\mu^\alpha\le \frac{2p}{p-\frac{\alpha}{s}}\mu(K_{\textbf{i}})^{\frac{\alpha}{s}}.
\]
\end{lemma}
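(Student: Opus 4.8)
The plan is to compute the distribution function of $M_{\mathcal{D}}^\mu(\chi_{K_{\textbf{i}}})$ directly and feed it into the layer-cake formula \eqref{lp}. Write $\beta:=\alpha/s\in(0,1]$, $f:=\chi_{K_{\textbf{i}}}$, let $\ell$ be the length of $\textbf{i}$, and note $\mu(K_{\textbf{i}})\le \mu(K)=1$. Since $0\le f\le 1$, every average in the definition of $M_{\mathcal{D}}^\mu f$ is at most $1$, so $M_{\mathcal{D}}^\mu f\le 1$ and the super-level set $E_t:=\{x\in K:\ M_{\mathcal{D}}^\mu f(x)>t\}$ is empty for $t\ge 1$. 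Hence the Choquet integral reduces to $p\int_0^1 t^{p-1}H_\mu^\alpha(E_t)\,dt$, and the whole task becomes bounding $H_\mu^\alpha(E_t)$ for $0<t<1$.

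The key geometric step is to trap $E_t$ inside a single basic cube of small $\mu$-measure. Fix $x\in E_t$. By definition of the supremum there is a basic cube $Q\ni x$ (one of the nested cubes $K_{\textbf{i}_n}(x)$) with $\mu(Q)^{-1}\mu(Q\cap K_{\textbf{i}})>t>0$, so $Q$ meets $K_{\textbf{i}}$. By the SSC any two basic cubes are nested or disjoint, whence $Q\subseteq K_{\textbf{i}}$ or $Q\supseteq K_{\textbf{i}}$. In the first case $x\in K_{\textbf{i}}$; in the second the average equals $\mu(K_{\textbf{i}})/\mu(Q)>t$, forcing $\mu(Q)<\mu(K_{\textbf{i}})/t$. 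The basic cubes containing $K_{\textbf{i}}$ form a finite nested chain $K=K_{\textbf{i}|_0}\supseteq\cdots\supseteq K_{\textbf{i}|_{\ell}}=K_{\textbf{i}}$; let $Q^{*}$ be its largest member with $\mu(Q^{*})<\mu(K_{\textbf{i}})/t$, which exists since $t<1$ makes $K_{\textbf{i}}$ itself eligible. Every cube produced above lies in $Q^{*}$ (in the second case because it is an eligible member of the chain, in the first because $K_{\textbf{i}}\subseteq Q^{*}$), so $E_t\subseteq Q^{*}$. Using monotonicity of $H_\mu^\alpha$ together with the elementary bound $H_\mu^\alpha(Q)\le\mu(Q)^{\beta}$ for any basic cube $Q$ (cover $Q$ by itself), we obtain $H_\mu^\alpha(E_t)\le\mu(Q^{*})^{\beta}<\mu(K_{\textbf{i}})^{\beta}t^{-\beta}$, which combined with the trivial bound $H_\mu^\alpha(E_t)\le H_\mu^\alpha(K)=1$ gives $H_\mu^\alpha(E_t)\le\min\{1,\mu(K_{\textbf{i}})^{\beta}t^{-\beta}\}$.

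Finally I would integrate, splitting the range at $t=\mu(K_{\textbf{i}})$ where the two terms in the minimum coincide:
\begin{align*}
\int_K\big[M_{\mathcal{D}}^\mu f\big]^p\,dH_\mu^\alpha
&\le p\int_0^{\mu(K_{\textbf{i}})}t^{p-1}\,dt+p\,\mu(K_{\textbf{i}})^{\beta}\int_{\mu(K_{\textbf{i}})}^{1}t^{p-\beta-1}\,dt\\
&=\mu(K_{\textbf{i}})^{p}+\frac{p}{p-\beta}\big(\mu(K_{\textbf{i}})^{\beta}-\mu(K_{\textbf{i}})^{p}\big)
\le\frac{p}{p-\beta}\,\mu(K_{\textbf{i}})^{\beta}.
\end{align*}
It is essential here that the integral is truncated at $t=1$: since $p>\beta$ one has $p-\beta-1>-1$, so $\int_{\mu(K_{\textbf{i}})}^{1}t^{p-\beta-1}\,dt$ is finite, whereas extending it to $+\infty$ would diverge. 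The resulting constant $p/(p-\beta)=p/(p-\alpha/s)$ is even sharper than the claimed $2p/(p-\alpha/s)$, so the lemma follows a fortiori. I expect the only genuine obstacle to be the geometric containment step of the middle paragraph, where the dyadic nesting exploited in the Euclidean argument is replaced by the nesting of basic cubes afforded by the SSC; once $E_t$ is confined to one cube, the rest is a routine one-variable computation.
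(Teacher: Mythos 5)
Your proof is correct, and it yields the sharper constant $p/(p-\alpha/s)$ in place of the paper's $2p/(p-\alpha/s)$. The underlying geometric fact is the same one the paper exploits --- for the indicator of a basic cube, each super-level set $\{M_{\mathcal{D}}^\mu(\chi_{K_{\textbf{i}}})>t\}$ is contained in a single ancestor cube of $K_{\textbf{i}}$ of $\mu$-measure less than $\mu(K_{\textbf{i}})/t$ --- but the execution differs in two respects. First, the paper truncates to the finite families $\mathcal{D}_n$, writes $M_{\mathcal{D}_n}^\mu(\chi_{K_{\textbf{i}}})$ \emph{exactly} as a step function on the annuli $K_{i_1\ldots i_{k-j}}\setminus K_{i_1\ldots i_{k-j+1}}$ with values $a_j=\mu(K_{\textbf{i}})/\mu(K_{i_1\ldots i_{k-j}})$, and then passes to the limit via the upper semicontinuity of $H_\mu^\alpha$ (Proposition \ref{uppersc}); you instead bound the distribution function of the full operator $M_{\mathcal{D}}^\mu$ directly by $\min\{1,\mu(K_{\textbf{i}})^{\alpha/s}t^{-\alpha/s}\}$, which dispenses with the truncation-and-limit step entirely. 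Second, to integrate, the paper substitutes $t^{p-1}\le a_j^{p-\varepsilon}t^{\varepsilon-1}$ with $\varepsilon=(p-\alpha/s)/2$, which is what produces the factor $2$; your bound on the distribution function is cruder pointwise than the paper's exact level sets, but you integrate it exactly (splitting at $t=\mu(K_{\textbf{i}})$), and this trade is strictly favorable. The one point worth making explicit in a final write-up is the justification that every basic cube containing $K_{\textbf{i}}$ is a prefix cube $K_{\textbf{i}|_j}$ (so that the eligible cubes form a tail of the chain and $Q^*$ absorbs them all); this follows from the SSC exactly as you indicate, since two basic cubes that meet must be nested and a cube strictly below $K_{\textbf{i}}$ in the tree cannot contain it.
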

\begin{proof}
For $n\ge 1,$ define
\begin{equation}\label{dzxl}
\mathcal{D}_n=\bigcup_{k=0}^n\{K_{\textbf{i}}:\  \textbf{i}\in \Sigma_k\}.
\end{equation}
Clearly, $\mathcal{D}_n\nearrow \mathcal{D}$ as $n\to \infty.$ Let $M_{\mathcal{D}_n}^\mu$ be the Hardy--Littlewood maximal operator associated with $\mathcal{D}_n$, i.e.,
\[
M_{\mathcal{D}_n}^\mu f(x)=\sup_{1\le k\le n, \textbf{i}\in \Sigma_k}\frac{1}{\mu(K_{\textbf{i}}(x))}\int_{K_{\textbf{i}}(x)}|f(y)|d\mu(y).
\]
Then $M_{\mathcal{D}_n}^\mu\nearrow M_{\mathcal{D}}^\mu$ as $n\to \infty.$

For $K_{\textbf{i}}=K_{i_1i_2\ldots i_k}\in \mathcal{D}_n\subset \mathcal{D},$ it follows from the definition of the Hardy--Littlewood maximal operator that
\begin{equation}\label{tzbs}
M_{\mathcal{D}_n}^\mu (\chi_{K_{\textbf{i}}})(x)=\chi_{K_{\textbf{i}}}(x)+\sum_{j=1}^{k-1} a_j\chi_{K_{i_1\ldots i_{k-j}}\setminus K_{i_1\ldots i_{k-j+1}}}(x)+a_k\le 1,
\end{equation}
where
\[
a_j=\frac{\mu(K_\textbf{i})}{\mu(K_{i_1\ldots i_{k-j}})}\le 1, ~~~j=0,1,\ldots,k-1, ~~a_k=\mu(K_{\textbf{i}}).
\]
Clearly, $\{a_j\}$ is decreasing and we define  $a_{k+1}=0$ by convention.

By \eqref{tzbs} and the definition of the Choquet integral we have
\begin{equation}\label{cgj}
\begin{split}
\int_K \left
[M_{\mathcal{D}_n}^\mu (\chi_{K_{\textbf{i}}})(x)\right
]^pdH_{\mu}^\alpha&=p\int_0^1H_{\mu}^\alpha(\{M_{\mathcal{D}_n}^\mu (\chi_{K_{\textbf{i}}})(x)>t\})t^{p-1}dt\\
&\le p \int_0^1 \sum_{j=0}^k\mu(K_{i_1\ldots i_{k-j}})^{\alpha/s}\chi_{(a_{j+1},a_j]}(t)t^{p-1}dt.
\end{split}
\end{equation}
Now, let $\varepsilon=\frac{p-\alpha/s}{2}$. By the assumption we have $\varepsilon>0.$ It follows from \eqref{cgj} that
\begin{align*}
\int_K \left(M_{\mathcal{D}_n}^\mu (\chi_{K_{\textbf{i}}})\right)^pdH_{\mu}^\alpha&\le p\int_0^1\sum_{j=0}^k\mu(K_{i_1\ldots i_{k-j}})^{\alpha/s}a_j^{p-\varepsilon}\chi_{(a_{j+1},a_j]}(t)t^{\varepsilon-1}dt\\
&=p\int_0^1\sum_{j=0}^k\mu(K_{i_1\ldots i_{k-j}})^{-\varepsilon}\mu(K_{\textbf{i}})^{p-\varepsilon}\chi_{(a_{j+1},a_j]}(t)t^{\varepsilon-1}dt\\
&\le p\int_0^1\sum_{j=0}^k\mu(K_{\textbf{i}})^{-\varepsilon}\mu(K_{\textbf{i}})^{p-\varepsilon}\chi_{(a_{j+1},a_j]}(t)t^{\varepsilon-1}dt\\
&=p\mu(K_{\textbf{i}})^{\alpha/s}\int_0^1t^{\varepsilon-1}dt=\frac{2p}{p-\frac{\alpha}{s}}\mu(K_{\textbf{i}})^{\alpha/s}.
\end{align*}
Finally, combining the fact that $M_{\mathcal{D}_n}^\mu\nearrow M_{\mathcal{D}}^\mu$ as $n\to \infty$, \eqref{lp} and Proposition \ref{uppersc}, we complete the proof by letting $n\to \infty$.
\end{proof}

\section{Proofs of Theorems \ref{MR1} and \ref{MR2}}\label{s3}

In this section, we will prove Theorems \ref{MR1} and \ref{MR2}
and Corollaries \ref{weaki} and \ref{pp}.

\subsection{Proof of Theorem \ref{MR1}}
To prove Theorem \ref{MR1}, we can assume that $f\ge 0$ and $\int_K f(x)^pdH_\mu^\alpha<\infty.$ By the definition of Choquet integral, we have
\[
H_\mu^\alpha(\{x\in K:2^k<f(x)\le 2^{k+1}\})<\infty
\]
for any $k\in \Z$. By Lemma \ref{bjzl}, for any $k\in \Z$ and $\varepsilon>0$, there exists a sequence of essential disjoint basic cubes $\{Q_j^{(k)}\}_{j}\subset \mathcal{D}$  such that $\{x\in K: 2^k<f(x)\le 2^{k+1}\}\subset \bigcup_{j}Q_j^{(k)}$ and
\begin{equation}\label{sbds}
\sum_{j}\mu(Q_j^{(k)})^{\alpha/s}<
\begin{cases}
H_\mu^\alpha(\{x\in K: 2^k<f(x)\le 2^{k+1}\})+\frac{\varepsilon}{2^{k+(k+1)p}},\ &k\ge 0 \\
H_\mu^\alpha(\{x\in K: 2^k<f(x)\le 2^{k+1}\})+\frac{\varepsilon}{2^{-k+(k+1)p}},\ &k<0.
\end{cases}
\end{equation}
Define $g(x)=\sum_{k\in \Z} 2^{(k+1)p}\chi_{B_k}(x)$, where $B_k=\bigcup_{j}Q_j^{(k)}$ and $x\in K$. It is easy to check that $f^p\le g.$ We next consider two cases: $p\ge 1$ and $\alpha/s<p<1.$

\textsc{Case 1.} $p\ge 1.$  By H\"{o}lder's inequality and the fact that $f^p\le g$, we find that, for any $x\in K$,
\begin{align*}
[M_{\mathcal{D}}^\mu f(x)]^p&\le M_{\mathcal{D}}^\mu (f^p)(x)\qquad \text{(by H\"{o}lder's inequality)}\\
&\le M_{\mathcal{D}}^\mu (g)(x)\qquad\quad \text{(since $f^p\le g$)}\\
&\le \sum_{k\in \Z}2^{(k+1)p}\sum_jM_{\mathcal{D}}^\mu(\chi_{Q_j^{(k)}})(x)~~~ \text{(by sublinearity of $M_{\mathcal{D}}^\mu$)}.
\end{align*}
Hence
\begin{align*}
&\int_K [M_{\mathcal{D}}^\mu f(x)]^pdH_\mu^\alpha\\
&\quad\le \sum_{k\in \Z} 2^{(k+1)p}\sum_j\int_K M_{\mathcal{D}}^\mu(\chi_{Q_j^{(k)}})(x)dH_\mu^\alpha \quad \text{(by Proposition \ref{subL})}\\
&\quad\le \frac{2}{1-\frac{\alpha}{s}} \sum_{k\in \Z} 2^{(k+1)p}\sum_j \mu(Q_j^{(k)})^{\alpha/s}\quad \text{(by Lemma \ref{tzfgj})}\\
&\quad\le \frac{2}{1-\frac{\alpha}{s}}\left(\sum_{k\in \Z} 2^{(k+1)p} H_\mu^\alpha(\{x:2^k<f(x)\le 2^{k+1}\})+\sum_{k\ge 0}\frac{\varepsilon}{2^k}+\sum_{k< 0}2^k\varepsilon\right)\quad \text{(by \eqref{sbds})}\\
&\quad\le \frac{2}{1-\frac{\alpha}{s}}2^{p+1}\sum_{k\in \Z}\int_{2^{k-1}}^{2^k}(2^k)^{p-1}dt H_\mu^\alpha(\{x:f(x)>2^k\})+\frac{6\varepsilon}{1-\frac{\alpha}{s}}\\
&\quad\le \frac{2}{1-\frac{\alpha}{s}}2^{p+1}\int_0^\infty \sum_{k\in\Z} H_\mu^\alpha(\{x:f(x)>2^k\})\chi_{(2^{k-1},2^k]}(t)(2t)^{p-1}dt+\frac{6\varepsilon}{1-\frac{\alpha}{s}}\\
&\quad\le\frac{2^{2p+1}}{p(1-\frac{\alpha}{s})} \left(p\int_0^\infty t^{p-1}H_\mu^\alpha(\{x:f(x)>t\})\sum_{k\in\Z}\chi_{(2^{k-1},2^k]}(t)dt\right)+\frac{6\varepsilon}{1-\frac{\alpha}{s}}\\
&\quad=\frac{2^{2p+1}}{p(1-\frac{\alpha}{s})}\int_K |f(x)|^p dH_\mu^\alpha+\frac{6\varepsilon}{1-\frac{\alpha}{s}}.
\end{align*}
The last inequality holds since $\{x:f(x)>2^k\}\subset \{x:f(x)>t\}$ when $t\in (2^{k-1},2^k]$ and $\sum_{k\in \Z}\chi_{(2^{k-1},2^k]}(t)=1$ when $t\in (0,\infty)$. Taking $\varepsilon\to 0$, we have
\begin{align*}
\int_K [M_{\mathcal{D}}^\mu f(x)]^pdH_\mu^\alpha
\le\frac{2^{2p+1}}{p(1-\frac{\alpha}{s})}\int_K |f(x)|^p dH_\mu^\alpha.
\end{align*}

\textsc{Case 2.} $\alpha/s<p<1.$ Noting that $f\le \sum_{k\in \Z} 2^{k+1}\chi_{B_k}$, we have
\[
M_{\mathcal{D}}^\mu f\le \sum_{k\in \Z}2^{k+1}\sum_jM_{\mathcal{D}}^\mu(\chi_{Q_j^{(k)}}).
\]
Observe that $(\sum_i a_i)^p\le \sum a_i^p$ for any nonnegative sequence $\{a_j\}$. We have
\[
\left(M_{\mathcal{D}}^\mu f\right)^p\le \sum_{k\in \Z}2^{(k+1)p}\sum_j\left[M_{\mathcal{D}}^\mu(\chi_{Q_j^{(k)}})\right]^p.
\]
Therefore, it follows that
\begin{align*}
&\int_K \left[M_{\mathcal{D}}^\mu f\right(x)]^pdH_\mu^\alpha\\
&\quad\le \frac{2p}{p-\frac{\alpha}{s}}\sum_{k\in \Z}2^{(k+1)p}\sum_j \mu(Q_j^{(k)})^{\alpha/s}~~\text{(by Proposition \ref{subL} and Lemma \ref{tzfgj})}\\
&\quad\le\frac{2p}{p-\frac{\alpha}{s}}\sum_{k\in \Z} 2^{(k+1)p} H_\mu^\alpha(\{x:2^k<f(x)\le 2^{k+1}\})+\frac{6p\varepsilon}{p-\frac{\alpha}{s}}\quad \text{(by \eqref{sbds})}\\
&\quad\le \frac{2p}{p-\frac{\alpha}{s}}2^{p+1}\sum_{k\in \Z}\int_{2^{k-1}}^{2^k}(2^k)^{p-1}dt H_\mu^\alpha(\{x:f(x)>2^k\})+\frac{6p\varepsilon}{p-\frac{\alpha}{s}}\\
&\quad\le\frac{2p}{p-\frac{\alpha}{s}}2^{p+1}\int_0^\infty \sum_{k\in\Z} H_\mu^\alpha(\{x:f(x)>2^k\})\chi_{(2^{k-1},2^k]}(t)t^{p-1}dt+\frac{6p\varepsilon}{p-\frac{\alpha}{s}}\\
&\quad\le \frac{2}{p-\frac{\alpha}{s}}2^{p+1}\left(p\int_0^\infty t^{p-1}H_\mu^\alpha(\{x:f(x)>t\})\sum_{k\in\Z}\chi_{(2^{k-1},2^k]}(t)dt\right)+\frac{6p\varepsilon}{p-\frac{\alpha}{s}}\\
&\quad=\frac{2^{p+2}}{p-\frac{\alpha}{s}}\int_K |f(x)|^p dH_\mu^\alpha+\frac{6p\varepsilon}{p-\frac{\alpha}{s}}.
\end{align*}
Taking $\varepsilon\to 0$, we have
\begin{align*}
\int_K [M_{\mathcal{D}}^\mu f(x)]^pdH_\mu^\alpha
\le\frac{2^{p+2}}{p-\frac{\alpha}{s}}\int_K |f(x)|^p dH_\mu^\alpha.
\end{align*}
Combining the above two cases, we complete the proof of Theorem \ref{MR1}.

\subsection{Proof of Theorem \ref{MR2} }
To prove Theorem \ref{MR2}, we require the following
elegant lemma, which overcomes the difficulty of the lack of linearity in Choquet integral, that is, inequality \eqref{eq11-11-a} below is now accessible.
Its proof is essentially inspired by \cite[Lemma 2]{OV}. The dyadic decomposition technique in \cite{OV}, however, is no longer applicable on fractal sets, and we must use basic construction cubes in fractal geometry in place of dyadic cubes to complete the proof.
 We point out that,
in the general case, \eqref{eq11-11-a} is not true for non-overlapping dyadic cubes if $0<\alpha<s$.

\begin{lemma}\label{zfg}
Let $0<\alpha\le s$ and $\{I_j\}_{j}\subset \mathcal{D}$ be a family of essential disjoint basic cubes. Then there exists a subfamily $\{I_{j_k}\}_{k}$ that satisfies the following:
\begin{enumerate}
\item [\textup{(i)}]$\sum\limits_{I_{j_k}\subset I}\mu(I_{j_k})^{\alpha/s}\le 2\mu(I)^{\alpha/s}$ for each basic cube $I\in \mathcal{D};$
\item [\textup{(ii)}]$H_{\mu}^\alpha(\bigcup\limits_j I_j)\le 2\sum\limits_k \mu(I_{j_k})^{\alpha/s};$
\item [\textup{(iii)}] for any $f\in L^1(\cup_{k} I_{j_k},H^{\alpha}_\mu)$,
we have
\begin{equation}\label{eq11-11-a}
\sum_{k}\int_{I_{j_k}}|f(x)|\,dH_{\mu}^\alpha
\le 2\int_{\bigcup\limits_{k} I_{j_k}}|f(x)|\,dH_{\mu}^\alpha.
\end{equation}
\end{enumerate}
\end{lemma}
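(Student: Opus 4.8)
Write $\beta:=\alpha/s\in(0,1]$ throughout, and recall the basic dichotomy for $\mathcal{D}$: any two basic cubes are either nested or disjoint, so the ancestors of a given cube form a \emph{finite} chain. The plan is to produce $\{I_{j_k}\}_k$ by a single greedy selection driven by the packing bound (i), and then to extract (ii) and (iii) from the structure of the discarded cubes. Concretely, I would enumerate the family as $I_1,I_2,\dots$ (it is countable) and process it one cube at a time, maintaining a selected collection $\mathcal{G}$: upon reaching $I_j$, I add it to $\mathcal{G}$ provided that for \emph{every} basic cube $I$ with $I_j\subseteq I$ one still has $\mu(I_j)^{\alpha/s}+\sum_{I_{j'}\in\mathcal{G},\,I_{j'}\subseteq I}\mu(I_{j'})^{\alpha/s}\le 2\mu(I)^{\alpha/s}$, and otherwise discard it. Since adjoining $I_j$ alters the inner sum only for the finitely many $I\supseteq I_j$, each test is finite, and the bound is preserved at every stage and hence in the limit; this is precisely (i).

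For (ii) the key notion is saturation: call $I\in\mathcal{D}$ \emph{saturated} if $\sum_{I_{j_k}\subseteq I}\mu(I_{j_k})^{\alpha/s}>\mu(I)^{\alpha/s}$ for the final family. I would first show every discarded cube lies in a saturated cube: at the instant $I_j$ is rejected there is an ancestor $I$ with $\mu(I_j)^{\alpha/s}+\sum_{\mathcal{G}\text{-sel}\subseteq I}\mu^{\alpha/s}>2\mu(I)^{\alpha/s}$; as the partial sum never exceeds $2\mu(I)^{\alpha/s}$ and $\mu(I_j)\le\mu(I)$, the final sum over $I$ exceeds $\mu(I)^{\alpha/s}$. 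Because ancestor chains are finite, each saturated cube sits inside a \emph{maximal} one, and maximal saturated cubes are pairwise disjoint; let $\mathcal{S}$ denote them. Then $\bigcup_j I_j\subseteq\bigl(\bigcup_k I_{j_k}\bigr)\cup\bigl(\bigcup_{I\in\mathcal{S}}I\bigr)$, so subadditivity of $H_\mu^\alpha$ together with $H_\mu^\alpha(I)\le\mu(I)^{\alpha/s}$ gives $H_\mu^\alpha(\bigcup_j I_j)\le\sum_k\mu(I_{j_k})^{\alpha/s}+\sum_{I\in\mathcal{S}}\mu(I)^{\alpha/s}$. Since the cubes of $\mathcal{S}$ are disjoint and each satisfies $\mu(I)^{\alpha/s}<\sum_{I_{j_k}\subseteq I}\mu(I_{j_k})^{\alpha/s}$, the last sum is at most $\sum_k\mu(I_{j_k})^{\alpha/s}$, yielding (ii).

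For (iii), using the layer-cake definition of the Choquet integral and Tonelli's theorem to exchange $\sum_k$ with $\int_0^\infty dt$, I would reduce \eqref{eq11-11-a} to the set inequality
\[
\sum_k H_\mu^\alpha(E\cap I_{j_k})\le 2\,H_\mu^\alpha\Bigl(E\cap\bigcup_k I_{j_k}\Bigr)\qquad\text{for every }E\subset K.
\]
Setting $G:=E\cap\bigcup_k I_{j_k}$ (so $E\cap I_{j_k}=G\cap I_{j_k}$) and assuming $H_\mu^\alpha(G)<\infty$, by Lemma \ref{bjzl} I pick disjoint basic cubes $\{I\}$ covering $G$ with $\sum_I\mu(I)^{\alpha/s}<H_\mu^\alpha(G)+\varepsilon$. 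For each $k$ the cubes $\{I\cap I_{j_k}\}_I$ cover $G\cap I_{j_k}$, and each nonempty $I\cap I_{j_k}$ is the smaller of two nested cubes, so $H_\mu^\alpha(G\cap I_{j_k})\le\sum_{I:\,I\cap I_{j_k}\ne\emptyset}\min\{\mu(I),\mu(I_{j_k})\}^{\alpha/s}$. Summing in $k$ and fixing a covering cube $I$, the selected cubes meeting $I$ split into those with $I_{j_k}\subseteq I$ (contributing $\le 2\mu(I)^{\alpha/s}$ by (i)) and those with $I\subsetneq I_{j_k}$ (at most one, contributing $\le\mu(I)^{\alpha/s}$); crucially these two types cannot both occur, since otherwise two selected cubes would be nested, contradicting disjointness. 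Hence each covering cube contributes at most $2\mu(I)^{\alpha/s}$, so $\sum_k H_\mu^\alpha(G\cap I_{j_k})\le 2\sum_I\mu(I)^{\alpha/s}<2(H_\mu^\alpha(G)+\varepsilon)$, and letting $\varepsilon\to0$ gives the claim.

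I expect the main obstacle to be that one greedy selection must serve all three conclusions with the \emph{same} constant $2$: the packing bound (i) is what keeps the covering estimate in (iii) tight, while the saturation deficit forced by rejected cubes is exactly what drives (ii). The sharpest point is the case analysis in (iii)—showing that, by the nested-or-disjoint dichotomy and the disjointness of the selected family, the two overlap types with a covering cube are mutually exclusive, so the constant stays $2$ rather than degrading to $3$. This is precisely where the basic-cube structure must replace the dyadic lattice of \cite{OV}, and where the warning that \eqref{eq11-11-a} fails for naive non-overlapping dyadic cubes becomes relevant.
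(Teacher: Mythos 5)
Your proposal is correct and follows essentially the same route as the paper: a greedy selection enforcing the packing bound for (i), a passage to maximal ``saturated'' cubes (the paper's $\widetilde{I_v}$) generated by the rejected cubes for (ii), and a near-optimal basic-cube cover combined with the packing condition for (iii). The only real difference is bookkeeping in (iii) — you organize the count per covering cube $I$ and use the nested-or-disjoint dichotomy to show the two overlap types are mutually exclusive, whereas the paper classifies covering cubes into the families $A_1$, $A_2$, $A$ — but both arguments rest on the same mechanism and yield the same constant $2$.
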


\begin{proof}
$\textup{(i)}$ We choose the desired subsequence $\{I_{j_k}\}_{k}$ by induction. First, let $j_1=1$. If $j_1,\ldots,j_k$ has been chosen so that $(i)$ holds, then let $j_{k+1}$ be the first index (if that exists) such that the family $\{I_{j_1}, \ldots,I_{j_{k+1}}\}$ satisfies $\textup{(i)}$. Therefore property (i) holds.

$\textup{(ii)}$ For $k\ge 1,$ consider index $j$ with $j_k<j<j_{k+1}$. It follows from the definition of $j_{k+1}$ that there exists a basic cube $I_j^*\supset I_j\bigcup (\cup_{\ell=1}^kI_{j_\ell})$ such that
\[
\sum_{I_{j_\ell}\subset I_j^*, \ell\le k}\mu(I_{j_\ell})^{\alpha/s}+\mu(I_j)^{\alpha/s}>2\mu(I^*_j)^{\alpha/s}.
\]
This, combined with the fact that $\mu(I_j^*)^{\alpha/s}\ge \mu(I_j)^{\alpha/s}$, gives
\begin{equation}\label{3e21}
\mu(I^*_j)^{\alpha/s}\le \sum_{I_{j_\ell}\subset I_j^*, \ell\le k}\mu(I_{j_\ell})^{\alpha/s}.
\end{equation}
Let $\{\widetilde{I_v}\}$ be the maximal basic cubes of $\{I_j^*\}_j$ such that $j_{k_v}<v<j_{k_v+1}$. Then
\[
\bigcup_{j}I_j\subset \left(\bigcup_{k}I_{j_k}\right)\cup \left(\bigcup_{v}\widetilde{I_v}\right).
\]
This, together with \eqref{3e21}, further implies that
\begin{align*}
H_\mu^\alpha\left(\bigcup_{j}I_j\right)\le \sum_{k}\mu(I_{j_k})^{\alpha/s}+\sum_{v}\mu(\widetilde{I_v})^{\alpha/s}\le \sum_{k}\mu(I_{j_k})^{\alpha/s}+\sum_v\sum_{I_{v_\ell}\subset \widetilde{I_v}, \ell\le k_v}\mu(I_{v_\ell})^{\alpha/s}\le 2\sum_{k}\mu(I_{j_k})^{\alpha/s}.
\end{align*}
The proof of the second statement is complete.

$\textup{(iii)}$ Finally, via the packing condition \textup{(i)}, we show
\eqref{eq11-11-a}. It suffices to prove, for any $t\in(0,\infty)$,
\begin{align*}
\sum_{k}H_\mu^\alpha(\{x\in I_{j_k}:\ |f(x)|>t\})
\le C H_\mu^\alpha(\{x\in \bigcup_{k}I_{j_k}:\ |f(x)|>t\}).
\end{align*}
To this end, without loss of generality, assume for any $t\in(0,\infty)$,
$$H_\mu^\alpha(\{x\in\bigcup_{k} I_{j_k}:\ |f(x)|>t\})>0.$$
Otherwise, the conclusion \eqref{eq11-11-a} follows directly.
Then applying the definition of Hausdorff contents, we know that there exists a family of essential disjoint basic cubes $\{P_i\}_{i}$ such that
\[
\bigcup_{k}\{x\in I_{j_k}:\
|f(x)|>t\}=\{x\in\bigcup_{k} I_{j_k}:\
|f(x)|>t\}\subset \bigcup_i P_i
\]
and, for arbitrarily small number $\varepsilon>0$,
\begin{equation}\label{eq11-11-x}
\sum_i \mu(P_i)^{\alpha/s}
\le(1+\varepsilon) H_\mu^\alpha(\{x\in\bigcup_{k} I_{j_k}:\
|f(x)|>t\}).
\end{equation}
Moreover, we may assume that, for any $i$, $\{x\in\bigcup_{k} I_{j_k}:\
|f(x)|>t\}\cap P_i\neq \emptyset$.

Let $A_1$ be the set of all $i$ such that there exists only one index $j_i^*$
 satisfying
 \[\{x\in I_{j_k^*}:\ |f(x)|>t\}\subset P_i,\]
and $A_2$ the set of all $i$ such that there exist at least two indices $\{i_v\}_v$ satisfying
\[\bigcup_v\{x\in I_{j_{i_v}}:\ |f(x)|>t\}\subset P_i.\]
Moreover, let $A$ be the set of all $j_k$ such that
the subset $\{x\in I_{j_k}:\ |f(x)|>t\}$ is covered
by at least two cubes from $\{P_i\}_i$.
Notice that
\begin{align}\label{eq12-5-x}
\begin{split}
&\sum_{k}H_\mu^\alpha(\{x\in I_{j_k}:\ |f(x)|>t\})\\
&\quad=\sum_{i\in A_1}H_\mu^\alpha(\{x\in I_{j_i^*}:\ |f(x)|>t\})+\sum_{i\in A_2}\sum_{v}H_\mu^\alpha(\{x\in I_{j_{i_v}}:\ |f(x)|>t\})\\
&\quad\quad+\sum_{j_k\in A}H_\mu^\alpha(\{x\in I_{j_k}:\ |f(x)|>t\}).
\end{split}
\end{align}
Obviously,
\begin{equation}\label{eq11-11-c}
\sum_{i\in A_1}H_\mu^\alpha(\{x\in I_{j_i^*}:\ |f(x)|>t\})
\leq \sum_{i\in A_1}\mu(P_i)^{\alpha/s}.
\end{equation}
In addition, note that $\bigcup_{v}I_{j_{i_v}}\subset P_i$. By this and the packing condition \textup{(i)},
we have, for any $i\in A_2$,
\[\sum_{v}\mu(I_{j_{i_v}})^{\alpha/s}\le 2\mu(P_i)^{\alpha/s},\]
which further implies that
\begin{equation}\label{eq11-11-d}
\sum_{i\in A_2}\sum_{v}H_\mu^\alpha(\{x\in I_{j_{i_v}}:\ |f(x)|>t\})
\le 2\sum_{i\in A_2} \mu(P_i)^{\alpha/s}.
\end{equation}
For any $j_k\in A$, we let $\{P_{j_k}^v\}_v$ be a sub-family of $\{P_i\}_i$ such that
$$\{x\in I_{j_k}:\ |f(x)|>t\}\subset \bigcup_ v P_{j_k}^v.$$
Then
\begin{equation}\label{eq11-11-e}
\sum_{j_k\in A}H_\mu^\alpha(\{x\in  I_{j_k}:\ |f(x)|>t\})
\leq \sum_{j_k\in A}\sum_v\mu(P_{j_k}^v)^{\alpha/s}.
\end{equation}
Therefore, by \eqref{eq12-5-x}, \eqref{eq11-11-c}, \eqref{eq11-11-d} and \eqref{eq11-11-e}, we infer that
\begin{align*}
&\sum_{k}H_\mu^\alpha(\{x\in I_{j_k}:\ |f(x)|>t\})\le \sum_{i\in A_1}\mu(P_i)^{\alpha/s}+2\sum_{i\in A_2}\mu(P_i)^{\alpha/s}
+\sum_{j_k\in A}\sum_v \mu(P_{j_k}^v)^{\alpha/s}\le 2\sum_i\mu(P_i)^{\alpha/s}.
\end{align*}
From this and \eqref{eq11-11-x}, we deduce that
\begin{equation*}
\sum_{k}H_\mu^\alpha(\{x\in I_{j_k}:\ |f(x)|>t\})
\le 2(1+\varepsilon)H_\mu^\alpha
(\{x\in\bigcup_{k} I_{j_k}:\ |f(x)|>t\}).
\end{equation*}
Combining this and the definition of Choquet
integrals, we further conclude that
\[\begin{aligned}
\sum_{k} \int_{I_{j_k}}|f(x)|\,dH_\mu^\alpha
&=\sum_{k}\int^{\infty}_{0}H_\mu^\alpha(\{x\in I_{j_k}:\ |f(x)|>t\})\,dt\\
&=\int^{\infty}_{0}\sum_{k}H_\mu^\alpha(\{x\in I_{j_k}:\ |f(x)|>t\})\,dt\\
&\le 2(1+\varepsilon)\int^{\infty}_{0}H_\mu^\alpha(\{x\in \bigcup_{k}I_{j_k}:\ |f(x)|>t\})\,dt\\
&\le2(1+\varepsilon)\int_{\bigcup_{k} I_{j_k}}|f(x)|\,dH_\mu^\alpha,
\end{aligned}\]
which is the desired inequality \eqref{eq11-11-a} by taking $\varepsilon\to 0$. This completes the proof of Lemma \ref{zfg}.
\end{proof}

\begin{remark}
Lemma \ref{zfg}\textup{(i)} is always referred to as the packing condition, which guarantees the validity of the key inequality \eqref{eq11-11-a}.
\end{remark}

The next lemma shows that the $s$-dimensional Hausdorff content coincides with the self-similar measure $\mu.$

\begin{lemma}\label{eq}
For any Borel set $E\subset K,$ we have $H_\mu^s(E)=\mu(E)$.
\end{lemma}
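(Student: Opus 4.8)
The plan is to prove $H_\mu^s(E)=\mu(E)$ for every Borel set $E\subset K$ by establishing the two inequalities separately, exploiting that when $\alpha=s$ the Hausdorff content is built from sums of the form $\sum_j\mu(I_j)^{s/s}=\sum_j\mu(I_j)$, which turns the definition into an infimum of total $\mu$-masses of covers by basic cubes. The inequality $H_\mu^s(E)\le\mu(E)$ should be the routine direction. First I would recall that $\mu$ is a Borel measure with $\supp\mu=K$, so by outer regularity (or directly by the definition of $\mu$ on the semiring generated by the basic cubes) any Borel set $E$ can be covered by basic cubes $\{I_j\}\subset\mathcal D$ with $\sum_j\mu(I_j)$ arbitrarily close to $\mu(E)$; since each admissible cover gives $\sum_j\mu(I_j)^{s/s}=\sum_j\mu(I_j)$ as a competitor in the infimum defining $H_\mu^s(E)$, taking the infimum yields $H_\mu^s(E)\le\mu(E)$.

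The reverse inequality $\mu(E)\le H_\mu^s(E)$ is where the work lies. Here I would start from an arbitrary cover $E\subset\bigcup_j I_j$ by basic cubes and use that $\mu$ is a measure, hence countably subadditive, to write
\[
\mu(E)\le\mu\Bigl(\bigcup_j I_j\Bigr)\le\sum_j\mu(I_j)=\sum_j\mu(I_j)^{s/s}.
\]
Taking the infimum over all such covers gives $\mu(E)\le H_\mu^s(E)$ immediately. The key structural fact making this clean is that, for $\alpha=s$, the summand $\mu(I_j)^{\alpha/s}$ is exactly $\mu(I_j)$, so the Hausdorff content is nothing but the outer measure generated by $\mu$ restricted to basic cubes; combined with the property (listed in Section 2.1) that $\mu(K_{\mathbf i})=p_{\mathbf i}$ and that incomparable cubes are $\mu$-essentially disjoint, the measure $\mu$ and this outer measure agree on $\mathcal D$ and hence on the generated $\sigma$-algebra.

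The main obstacle I anticipate is making the first (upper) inequality rigorous: one must produce, for each $\varepsilon>0$, a cover of $E$ by basic cubes whose total $\mu$-mass exceeds $\mu(E)$ by at most $\varepsilon$. For a general Borel set this requires an approximation argument, since covering $E$ efficiently by the rigid family $\mathcal D$ (whose cubes only shrink by the fixed ratios $r_i$) is not automatic. I would handle this through the regularity of the Borel measure $\mu$: approximate $E$ from outside by a relatively open set $U\supset E$ with $\mu(U)<\mu(E)+\varepsilon$, then cover $U\cap K$ by maximal basic cubes contained in $U$ (using that the basic cubes of all generations form a net generating the topology of $K$ under the SSC, so every point of $K$ lies in arbitrarily small basic cubes). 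These maximal cubes are non-overlapping and contained in $U$, so the sum of their $\mu$-masses is at most $\mu(U)<\mu(E)+\varepsilon$. Letting $\varepsilon\to0$ closes the gap. Once both inequalities are in hand, combining them yields $H_\mu^s(E)=\mu(E)$, completing the proof.
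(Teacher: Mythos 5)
Your proof is correct, but it takes a genuinely different route from the paper's. The paper observes that for $\alpha=s$ the content $H_\mu^s$ is exactly the Carath\'eodory outer measure generated by the premeasure $I\mapsto\mu(I)$ on the semi-algebra $\mathcal{D}$ (whose elements have pairwise intersections that are empty or nested, and whose complements are finite disjoint unions of basic cubes under the SSC), and then invokes Carath\'eodory's extension theorem together with uniqueness to conclude $H_\mu^s=\mu$ on all Borel sets in one stroke. You instead prove the two inequalities by hand: the bound $\mu(E)\le H_\mu^s(E)$ from countable subadditivity of $\mu$ applied to an arbitrary cover, and the bound $H_\mu^s(E)\le\mu(E)$ from outer regularity of the Borel measure $\mu$ on the compact set $K$ plus a decomposition of a relatively open $U\supset E$ into its maximal basic cubes (which exist and are pairwise disjoint because the cubes $K_{\mathbf{i}|_n}(x)$ shrink to $x$ and any two basic cubes are nested or disjoint). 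Your argument is more self-contained and makes explicit exactly which structural facts are used --- subadditivity, regularity, and the net property of $\mathcal{D}$ --- at the cost of being longer; the paper's is shorter but outsources the work to a standard theorem whose hypotheses (that $\mathcal{D}$ is a semi-algebra and that $H_\mu^s$ restricted to $\mathcal{D}$ agrees with $\mu$) it leaves to the reader to check. Both are complete proofs of the lemma.
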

\begin{proof}
For any Borel set $E\subset K,$ recall that when $\alpha=s$ we have
\[
H_\mu^s(E)=\inf\left\{\sum_{j}\mu(I_j):\ E\subset \bigcup_{j}I_j,\ I_j\in \mathcal{D}\right\}.
\]
It is easy to check that $H_\mu^s$ is an outer measure on $K$ and $H_\mu^s(I_j)=\mu(I_j)$ for any $I_j\in \mathcal{D}$. Since $\mathcal{D}$ is an semi-algebra on $K$, by Carath\'{e}odory's extension theorem (see, for example, Theorem 1.14 in \cite{Foll}) we have $H_\mu^s(E)=\mu(E)$ for any Borel subset $E\subset K.$
%
\end{proof}

\begin{lemma}\label{jfgj}
Assume that $0<\alpha\le s$ and $f\ge0$ is a $\mu$-measurable function. Then, for any basic cube $I\in \mathcal{D}$ we have
\[
\int_I f(x)d\mu(x)\le \frac{s}{\alpha}\left(\int_I f(x)^{\alpha/s}d H_{\mu}^\alpha\right)^{s/\alpha}.
\]
\end{lemma}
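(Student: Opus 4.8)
The plan is to reduce the estimate to two ingredients: a pointwise comparison between the measure $\mu$ and the content $H_\mu^\alpha$ on sublevel sets, and then a layer-cake formula combined with a short Hardy-type rearrangement bound. Throughout write $\beta:=\alpha/s\in(0,1]$, so that $1/\beta=s/\alpha\ge 1$. First I would establish the set inequality
\[
\mu(E)\le H_\mu^\alpha(E)^{s/\alpha}\qquad\text{for every }E\subset K.
\]
To prove it, fix an arbitrary cover $E\subset\bigcup_j I_j$ with $I_j\in\mathcal{D}$. Countable subadditivity of the Borel measure $\mu$ (equivalently, $\mu=H_\mu^s$ from Lemma \ref{eq}) gives $\mu(E)\le\sum_j\mu(I_j)=\sum_j\bigl(\mu(I_j)^{\alpha/s}\bigr)^{s/\alpha}$, and since the exponent $s/\alpha\ge1$, the elementary inequality $\sum_j x_j^{s/\alpha}\le\bigl(\sum_j x_j\bigr)^{s/\alpha}$ for $x_j\ge0$ yields $\mu(E)\le\bigl(\sum_j\mu(I_j)^{\alpha/s}\bigr)^{s/\alpha}$. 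Taking the infimum over all such covers, and using that $t\mapsto t^{s/\alpha}$ is increasing and continuous, gives the claim.

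Next I would pass to the integral. Set $\phi(t):=H_\mu^\alpha(\{x\in I:\ f(x)>t\})$, which is nonnegative and nonincreasing in $t$. Applying the layer-cake formula to $\int_I f\,d\mu$ and then the set inequality above gives
\[
\int_I f\,d\mu=\int_0^\infty\mu(\{x\in I:\ f(x)>t\})\,dt\le\int_0^\infty\phi(t)^{1/\beta}\,dt.
\]
On the other side, the definition of the Choquet integral, the identity $\{x\in I:f(x)^\beta>u\}=\{x\in I:f(x)>u^{1/\beta}\}$, and the substitution $u=t^\beta$ give
\[
\int_I f^{\alpha/s}\,dH_\mu^\alpha=\int_0^\infty\phi(u^{1/\beta})\,du=\beta\int_0^\infty t^{\beta-1}\phi(t)\,dt=:\beta G.
\]
Assuming $G<\infty$ (otherwise the right-hand side is infinite and the statement is trivial), it then suffices to prove the rearrangement bound $\int_0^\infty\phi(t)^{1/\beta}\,dt\le\beta^{1/\beta-1}G^{1/\beta}$ and substitute $G=\tfrac1\beta\int_I f^{\alpha/s}\,dH_\mu^\alpha$, which produces exactly the constant $s/\alpha$.

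The heart of the argument, and the step I expect to require the most care since it pins down the constant, is this Hardy-type bound. Because $\phi$ is nonincreasing, for each $t>0$ we have
\[
G\ge\int_0^t r^{\beta-1}\phi(r)\,dr\ge\phi(t)\int_0^t r^{\beta-1}\,dr=\frac{\phi(t)\,t^\beta}{\beta},
\]
so $\phi(t)\le\beta G\,t^{-\beta}$. Raising this to the nonnegative power $1/\beta-1$ and using $\beta(1/\beta-1)=1-\beta$ gives $\phi(t)^{1/\beta-1}\le\beta^{1/\beta-1}G^{1/\beta-1}t^{\beta-1}$; multiplying by $\phi(t)$ and integrating in $t$, together with $\int_0^\infty t^{\beta-1}\phi(t)\,dt=G$, yields $\int_0^\infty\phi(t)^{1/\beta}\,dt\le\beta^{1/\beta-1}G^{1/\beta}$. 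Combining this with the previous paragraph and inserting $G=\tfrac1\beta\int_I f^{\alpha/s}\,dH_\mu^\alpha$ produces the claimed constant $s/\alpha$. The main subtlety is purely bookkeeping of the exponents so that the final constant is exactly $s/\alpha$; the degenerate case $\alpha=s$ reduces to $H_\mu^s=\mu$ (Lemma \ref{eq}) with equality, and the case $G=\infty$ is immediate.
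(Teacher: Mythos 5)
Your proposal is correct and follows essentially the same route as the paper: both first establish $\mu(E)\le H_\mu^\alpha(E)^{s/\alpha}$ from the elementary inequality $\sum_j x_j^{s/\alpha}\le(\sum_j x_j)^{s/\alpha}$, then combine the layer-cake formula with the Chebyshev-type bound $t^{\alpha/s}\,H_\mu^\alpha(\{f>t\})\le\int_I f^{\alpha/s}\,dH_\mu^\alpha$ (your $\phi(t)\le\beta G t^{-\beta}$ is exactly the paper's $\lambda_\alpha(r^{s/\alpha})r\le\int_I f^{\alpha/s}\,dH_\mu^\alpha$ after the substitution $t=r^{s/\alpha}$). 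The exponent bookkeeping and the resulting constant $s/\alpha$ match the paper's computation.
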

\begin{proof}
Using the elementary inequality $\sum_{j=1}^\infty a_j\le \left(\sum_{j=1}^\infty a_j^q\right)^{1/q}$, where $0<q\le 1$ and $a_j>0,$ we have
\[
\left(\sum_{j=1}^\infty\mu(I_j)\right)^{1/s}\le \left(\sum_{j=1}^\infty\mu(I_j)^{\alpha/s}\right)^{1/\alpha}
\]
for any $I_j\in \mathcal{D}.$ Hence, by the definition of content we have $(H^s_\mu(E))^{1/s}\le (H^\alpha_\mu(E))^{1/\alpha}$ for any subset $E\subset K.$ If we write $\lambda_\beta(t)=H^\beta_\mu\{x\in I:f(x)>t\}$. Then
\[
(\lambda_s(t))^{1/s}\le (\lambda_\alpha(t))^{1/\alpha}, ~~t>0.
\]
From this and Lemma \ref{eq}, we infer that
\begin{align*}
\int_I f(x)d\mu(x)&=\int_I f(x)d H_{\mu}^\alpha=\int_0^\infty \lambda_s(t)dt\le \frac{s}{\alpha}\int_0^\infty \lambda_\alpha(r^{s/\alpha})^{s/\alpha}r^{(s/\alpha-1)}dr\\
&\le \frac{s}{\alpha}\left(\int_I f^{\alpha/s}d H^\alpha_\mu\right)^{(s/\alpha-1)}\int_0^\infty \lambda_\alpha(r^{s/\alpha})dr
\le \frac{s}{\alpha}\left(\int_I f^{\alpha/s}d H^\alpha_\mu\right)^{s/\alpha},
\end{align*}
where in the second inequality we used the fact that
\[
\lambda_\alpha(r^{s/\alpha})r\le \int_{\{x\in K: f^{\alpha/s}(x)>r\}}f^{\alpha/s}dH^\alpha, ~~r>0.
\]
The proof of the lemma is complete.
\end{proof}

Now, we are in a position to prove Theorem \ref{MR2}.
\begin{proof}[Proof of Theorem \ref{MR2}]
We can assume that $f\ge 0.$ For $n\ge 1,$ let $\mathcal{D}_n$ be defined by \eqref{dzxl}. Again, $M_{\mathcal{D}_n}^\mu\nearrow M_{\mathcal{D}}^\mu$ as $n\to \infty,$ where $M_{\mathcal{D}_n}^\mu$ be the Hardy--Littlewood maximal operator associated with $\mathcal{D}_n$. Write $\mathcal{B}_{n,t}=\{I\in \mathcal{D}_n: \frac{1}{\mu(I)}\int_I fd\mu>t\}$. Clearly,
\[
\{x\in K: M_{\mathcal{D}_n}^\mu f(x)>t\}=\bigcup\{I: I\in \mathcal{B}_{n,t}\}.
\]
Let $\mathcal{B}_{n,t}^*$ be the maximal cubes in $\mathcal{B}_{n,t}$. Then they are essential disjoint and
\[
\bigcup\{I:I\in \mathcal{B}_{n,t}^*\}=\bigcup\{I: I\in \mathcal{B}_{n,t}\}.
\]
For any  $I\in \mathcal{B}_{n,t}^*,$ it follows from Lemma \ref{jfgj} that
\[
\mu(I)^{\alpha/s}\le \left(\frac{1}{t}\int_I f(x)d\mu\right)^{\alpha/s}\le \left(\frac{s}{\alpha}\right)^{\alpha/s}t^{-\alpha/s}\int_I[f(x)]^{\alpha/s}dH_\mu^\alpha.
\]
Applying Lemma \ref{zfg} to the family $\mathcal{B}_{n,t}^*,$ we obtain a subfamily $\{I_{j_k}\}_k$ such that
\begin{align*}
H_\mu^\alpha(\{x\in K: M_{\mathcal{D}_n}^\mu f(x)>t\})
&=H_\mu^\alpha\left(\bigcup\{I:I\in \mathcal{B}_{n,t}^*\}\right)\\
&\le 2\sum_k\mu(I_{j_k})^{\alpha/s}\quad \text{(by Lemma \ref{zfg}(ii))}\\
&\le 2\left(\frac{s}{\alpha}\right)^{\alpha/s}t^{-\alpha/s}\sum_k\int_{I_{j_k}}[f(x)]^{\alpha/s}dH_\mu^\alpha\\
&\le  4\left(\frac{s}{\alpha}\right)^{\alpha/s}t^{-\alpha/s}\int_K [f(x)]^{\alpha/s}dH_\mu^\alpha\quad \text{(by Lemma \ref{zfg}(iii))}.
\end{align*}
Finally, the desired result follows from the fact that the left side converges to $H_\mu^\alpha(\{x\in K: M_\mathcal{D}^\mu f(x)>t\})$ as $n\to \infty$ and Proposition \ref{uppersc}.
\end{proof}

We should point out that in the proofs of Theorems \ref{MR1} and \ref{MR2} we follow the strategy of
 Orobitg and Verdera \cite{OV}. In addition, recall that Liu \cite{Liu} gave some similar estimates for Hardy--Littlewood maximal operators on doubling metric measure spaces without quantitative constants, and Saito et al. \cite{STW} considered these estimates on ${\mathbb R}^d$.

At the end of this subsection, we give proofs of Corollaries \ref{weaki} and \ref{pp}.

\begin{proof}[Proof of Corollary \ref{weaki}]
It follows immediately from Theorem \ref{MR2} with $\alpha=s$ and  Lemma \ref{eq}.
\end{proof}

\begin{proof}[Proof of Corollary \ref{pp}]
Let $L^{\infty}(K,\mu)$ denote the space of all functions $g$ on $K$ such that the quasi-norm
$$\|g\|_{L^\infty(K,\mu)}:=\inf_{E_0\subset K,\,\mu(E_0)=0}\sup_{x\in K\backslash E_0}|g(x)|<\infty.$$
Then we have $\|M_{\mathcal{D}}^\mu g\|_{L^{\infty}(K,\mu)}
\le \|g\|_{L^{\infty}(K,\mu)}$ for any $g\in L^{\infty}(K,\mu)$.
For $f\in L^p(K,\mu)$, $1<p<\infty$, and any given $0<s<\infty$, let $$f_1:=f\chi_{\{x\in K:\,|f(x)|>s\}}~~~\text{and}~~~f_2:=f-f_1=f\chi_{\{x\in K:\,|f(x)|\le s\}}.$$
It is easy to see that $f_1\in L^{1}(K,\mu)$ and $f_2\in L^{\infty}(K,\mu)$. Then by Corollary \ref{weaki} and a standard argument, we obtain
Corollary \ref{pp}.
\end{proof}

\section{Proofs of Theorem \ref{lpc} and Propositions \ref{MR3} and \ref{t16}}\label{s4}

\subsection{Proof of Theorem \ref{lpc}}
The importance of the Hardy--Littlewood maxinal operator lies in the fact that it majorizes many important operators in analysis.
Here, applying Corollary \ref{weaki}, we give the Lebesgue differentiation theorem on the self-similar set $K$.

\begin{theorem}\label{A2}
Let $f\in L^1_{\rm loc}(K,\mu)$. Then

\begin{enumerate}
 \item [\textup{(i)}]for $\mu$-almost every $x\in K$,
\[\lim_{n\to \infty}\frac1{\mu(K_{\mathbf{i}_n}(x))}\int_{K_{\mathbf{i}_n}(x)}f(y)d\mu=f(x);
\]
 \item [\textup{(ii)}]for $\mu$-almost every $x\in K$,
\[\lim_{n\to \infty}\frac1{\mu(K_{\mathbf{i}_n}(x))}\int_{K_{\mathbf{i}_n}(x)}|f(y)-f(x)|d\mu=0.
\]
\end{enumerate}
\end{theorem}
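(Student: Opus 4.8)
The plan is to prove the Lebesgue differentiation theorem on $K$ by following the classical template: first establish part (i) using the weak type $(1,1)$ inequality from Corollary \ref{weaki} together with a density argument, and then deduce part (ii) as a routine corollary. The essential point is that the maximal operator $M_{\mathcal{D}}^\mu$ controls the oscillation of the averaging process, and Corollary \ref{weaki} provides exactly the quantitative bound needed to transfer convergence from a dense subclass to all of $L^1_{\loc}(K,\mu)$.

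For part (i), I would first reduce to the case $f\in L^1(K,\mu)$ by working on a single basic cube $I\in\mathcal{D}$ at a time, since local integrability means $f\in L^1(I,\mu)$ for each such $I$ and $K$ is covered by basic cubes. The key quantity to estimate is the \emph{oscillation operator}
\[
\Omega f(x):=\limsup_{n\to\infty}\left|\frac1{\mu(K_{\mathbf{i}_n}(x))}\int_{K_{\mathbf{i}_n}(x)}f(y)\,d\mu-f(x)\right|,
\]
and the goal is to show $\Omega f(x)=0$ for $\mu$-almost every $x$. The strategy is to exploit that continuous functions (or more simply, functions constant on basic cubes of some fixed generation, which are dense in $L^1(K,\mu)$) satisfy the differentiation property trivially: if $g$ is constant on each cube $K_{\mathbf{j}}\in\Sigma_N$, then for $n\ge N$ the average $\frac1{\mu(K_{\mathbf{i}_n}(x))}\int_{K_{\mathbf{i}_n}(x)}g\,d\mu$ equals $g(x)$ exactly, so $\Omega g\equiv 0$. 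Given arbitrary $f\in L^1(K,\mu)$ and $\varepsilon>0$, I would choose such a $g$ with $\int_K|f-g|\,d\mu<\varepsilon$, and then bound $\Omega f=\Omega(f-g)\le M_{\mathcal{D}}^\mu(f-g)+|f-g|$ pointwise, using $\Omega g=0$ and the sublinearity of the $\limsup$ together with the elementary bound of each average by the maximal function.

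The decisive step is then a standard Chebyshev-plus-weak-type estimate: for any $t>0$,
\[
\mu(\{x\in I:\Omega f(x)>t\})\le\mu(\{M_{\mathcal{D}}^\mu(f-g)>t/2\})+\mu(\{|f-g|>t/2\})\le\frac{8}{t}\int_K|f-g|\,d\mu<\frac{8\varepsilon}{t},
\]
where the first term is controlled by Corollary \ref{weaki} and the second by Markov's inequality. Letting $\varepsilon\to0$ forces $\mu(\{\Omega f>t\})=0$ for every $t>0$, and taking a countable union over $t=1/k$ yields $\Omega f=0$ $\mu$-almost everywhere, which is precisely part (i). For part (ii), I would apply part (i) to the countable family of functions $f_q(y):=|f(y)-q|$ indexed by rationals $q$; for each fixed $q$ one gets that for $\mu$-a.e.\ $x$ the averages of $|f-q|$ over $K_{\mathbf{i}_n}(x)$ converge to $|f(x)-q|$. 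Intersecting these full-measure sets over all rationals $q$ and, for a given $x$ in the intersection, choosing a rational $q$ close to $f(x)$, the triangle inequality $|f(y)-f(x)|\le|f(y)-q|+|q-f(x)|$ gives
\[
\limsup_{n\to\infty}\frac1{\mu(K_{\mathbf{i}_n}(x))}\int_{K_{\mathbf{i}_n}(x)}|f(y)-f(x)|\,d\mu\le 2|f(x)-q|,
\]
which can be made arbitrarily small, yielding part (ii). I expect the main obstacle to be purely bookkeeping rather than conceptual: namely, verifying that functions constant on basic cubes of a fixed generation are genuinely dense in $L^1(K,\mu)$ (this follows from the SSC and the measure formula $\mu(K_{\mathbf{i}})=p_{\mathbf{i}}$, letting one approximate indicator functions of basic cubes and hence simple functions), and handling the local-versus-global integrability carefully so that the nested cubes $K_{\mathbf{i}_n}(x)$ are eventually contained in the fixed cube $I$ on which the $L^1$ approximation is carried out. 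The nonlinearity difficulties flagged earlier in the paper do not intervene here, since this argument takes place entirely with respect to the genuine measure $\mu$ rather than the Hausdorff content $H^\alpha_\mu$.
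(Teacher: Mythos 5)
Your proposal is correct and follows the same overall template as the paper: approximate by a dense subclass on which differentiation is trivial, bound the oscillation $\Omega f=\Omega(f-g)\le M_{\mathcal{D}}^\mu(f-g)+|f-g|$, and kill $\mu(\{\Omega f>t\})$ via Corollary \ref{weaki} plus Chebyshev. The two places where you genuinely deviate are both sound and arguably cleaner. First, the paper takes $C(K)$ as the dense class (citing density of continuous functions in $L^1(K,\mu)$ for compact $K$) and uses continuity to get pointwise convergence of the averages; you instead use functions constant on generation-$N$ basic cubes, for which the averages are \emph{exactly} $g(x)$ once $n\ge N$. This is more self-contained --- density of such step functions follows from the fact that $\mu$ is determined by its values on $\mathcal{D}$ (cf.\ Lemma \ref{eq}), or equivalently from martingale convergence, since the averages in question are precisely the conditional expectations with respect to the generation-$n$ $\sigma$-algebras --- and it avoids importing an external density theorem. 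Second, for part (ii) the paper reruns the density argument with a continuous $g$ and the pointwise bound $\varlimsup(\cdots)\le M_{\mathcal{D}}^\mu(f-g)+|f-g|$, whereas you deduce (ii) from (i) by applying it to the countable family $|f-q|$, $q\in\mathbb{Q}$, and intersecting the full-measure sets; this is the classical Lebesgue-point argument and requires no new estimate. Two trivial bookkeeping remarks: your combined constant should be $10/t$ rather than $8/t$ (the weak-type term contributes $8/t$ and Markov contributes $2/t$), which changes nothing; and since $K=K_\emptyset\in\mathcal{D}$ and $\mu$ is finite, $L^1_{\loc}(K,\mu)=L^1(K,\mu)$, so the localization step is vacuous here, exactly as in the paper.
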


\begin{proof}[Proof of Theorem \ref{A2}]
It follows from \cite[p.\,197]{hsbook} that the collection of
all continuous functions on $K$ is dense on $L^1(K,\mu)$. Then, by a similar argument that used in \cite[Theorem 1.8]{h01}, one can prove Theorem \ref{A2}.
\end{proof}

By Theorem \ref{A2}(i), we immediately have the following corollary.
\begin{corollary}\label{Ac1}
Let $f\in L^1_{\rm loc}(K, \mu)$. Then for almost every $x\in K$, $$|f(x)|\le  M_{\mathcal{D}}^\mu f(x).$$
\end{corollary}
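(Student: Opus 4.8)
The plan is to read off the pointwise inequality directly from the Lebesgue differentiation theorem, Theorem \ref{A2}(i). The key observation is that, by the very definition of the maximal operator $M_{\mathcal{D}}^\mu$, for every $x\in K$ and every $n\in\N$ one has the trivial bound
\[
\frac{1}{\mu(K_{\mathbf{i}_n}(x))}\int_{K_{\mathbf{i}_n}(x)}|f(y)|\,d\mu(y)\le M_{\mathcal{D}}^\mu f(x),
\]
since the right-hand side is the supremum over $n$ of precisely these averages. Thus it suffices to identify the left-hand side, in the limit $n\to\infty$, with $|f(x)|$.

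For this I would apply Theorem \ref{A2}(i) not to $f$ but to the function $|f|$, which also lies in $L^1_{\loc}(K,\mu)$ whenever $f$ does. The theorem then guarantees that for $\mu$-almost every $x\in K$,
\[
\lim_{n\to\infty}\frac{1}{\mu(K_{\mathbf{i}_n}(x))}\int_{K_{\mathbf{i}_n}(x)}|f(y)|\,d\mu(y)=|f(x)|.
\]
Since each term in this limit is bounded above by $M_{\mathcal{D}}^\mu f(x)$ by the display above, passing to the limit yields $|f(x)|\le M_{\mathcal{D}}^\mu f(x)$ for $\mu$-almost every $x\in K$, as required. Alternatively, one may apply Theorem \ref{A2}(i) to $f$ itself and use the triangle inequality inside each average before passing to the limit; the two routes give the same conclusion.

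I do not anticipate any genuine obstacle, as the statement is an immediate corollary of the differentiation theorem already established; the only point meriting a word of care is that the exceptional $\mu$-null set is exactly the one furnished by Theorem \ref{A2}(i), so the inequality holds off a single null set independent of $n$.
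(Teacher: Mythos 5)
Your proposal is correct and is exactly the argument the paper intends: the paper derives the corollary immediately from Theorem \ref{A2}(i), and your explicit step of applying the differentiation theorem to $|f|$ (or using the triangle inequality inside the averages) before bounding each average by the supremum is the standard way to make that ``immediately'' precise.
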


\begin{proof}[Proof of Theorem \ref{lpc}]
Applying Corollary \ref{Ac1} and Theorem \ref{MR1}, we immediately obtain Theorem \ref{lpc}.
\end{proof}

\subsection{Proof of Proposition \ref{MR3}}
Note that
\begin{align}\label{1106}
\begin{split}
\int_K M_{\mathcal{D}}^\mu f(x) d\mu&=2\int_0^\infty
\mu\{x\in K: M_{\mathcal{D}}^\mu f(x)>2\alpha\}d\alpha\\
&\le 2\mu(K)+2\int_1^\infty \mu\{x\in K: M_{\mathcal{D}}^\mu f(x)>2\alpha\}d\alpha.
\end{split}
\end{align}
Decompose $f$ as $f_1+f_2$, where $f_1:=f\chi_{\{x\in K: |f(x)|>\alpha\}}$ and
$f_2:=f-f_1$. Then by the fact that
$M^\mu_{\mathcal{D}}:\ L^\infty(K,\mu)\to L^{\infty}(K,\mu)$, we find that
$M_{\mathcal{D}}^\mu f_2(x)\le\|M_{\mathcal{D}}^\mu f_2\|_{L^{\infty}(K,\mu)}\le \|f_2\|_{L^{\infty}(K,\mu)}\le \alpha,$
which yieds
\begin{align}\label{1107}
\{x\in K: M_{\mathcal{D}}^\mu f(x)>2\alpha\}\subset
\{x\in K: M_{\mathcal{D}}^\mu f_1(x)>\alpha\}.
\end{align}
Therefore, from Corollary \ref{weaki}, we infer that
\begin{align*}
&\int_1^\infty \mu\{x\in K: M_{\mathcal{D}}^\mu f(x)>2\alpha\}d\alpha
 \quad  \quad \text{(by \eqref{1107})}\\
&\quad\le \int_1^\infty \mu\{x\in K: M_{\mathcal{D}}^\mu f_1(x)>\alpha\}d\alpha \quad  \text{(by Corollary \ref{weaki})}\\
&\quad\le \int_1^\infty\frac{4}{\alpha}\int_K |f_1(x)|d\mu d\alpha
\le 4\int_K|f(x)|\int^{\max\{1,|f(x)|\}}_1\frac 1\alpha d\alpha d\mu
=4\int_K|f(x)|\log^+|f(x)|d\mu.
\end{align*}
In the second inequality, we use the fact $f\in L^1(K,\mu)$ due to $|f|\log^+|f|\in L^1(K,\mu)$.
This, together with \eqref{1106}, completes the proof.

\subsection{Proof of Proposition \ref{t16}}
Recall that $\mu$ is a doubling measure if $K$ satisfies the SSC. Therefore, $(K,\mu,d)$ is a space of homogeneous type in the sense of Coifman and Weiss
\cite{cw77}, where $d(x,y):=|x-y|$, with $x,\ y\in K$, denotes the usually Euclidean distance on $\R^d$. We recall the Whitney type covering lemma from \cite[Theorem 3.2]{cw77} as follows.

\begin{lemma}\label{2l2}
Suppose $U\subsetneq K$ is an open bounded set. Then there exists a sequence of balls $\{B_{r_j}(x_j)\}_{j}$ in $K$ with the center $x_j$ and the radius $r_j$ satisfying
\begin{enumerate}[(1)]
\item [\textup{(i)}] $U=\cup_j B_{r_j}(x_j)$;
\item [\textup{(ii)}] there exists a constant $M$ such that $\sum_{j}\chi_{B_{Dr_j}(x_j)}\le M$;
\item [\textup{(iii)}] for any $j$,
$B_{3Dr_j}(x_j)\cap (K\setminus U)\neq \emptyset.$
\end{enumerate}
\end{lemma}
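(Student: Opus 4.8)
The plan is to run the classical Whitney construction in the metric space $(K,d)$ and to verify (i)--(iii) with constants depending only on the doubling constant $D$. Set $\rho(x):=d(x,K\setminus U)=\inf_{z\in K\setminus U}d(x,z)$ for $x\in K$. Since $U\subsetneq K$ is open, $K\setminus U$ is a nonempty closed, hence compact, subset of $K$; thus $\rho$ is finite, it is $1$-Lipschitz as the distance to a fixed set, it satisfies $\rho(x)>0$ exactly when $x\in U$, and for each $x$ the infimum is attained at some $z_x\in K\setminus U$ with $d(x,z_x)=\rho(x)$.

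First I would select the centres and radii. Applying the basic $5r$-covering (Vitali) lemma in the separable metric space $K$ to the family $\{B(x,\rho(x)/(10D))\}_{x\in U}$, whose radii are bounded by $\diam(K)/(10D)<\infty$, produces a countable, pairwise disjoint subfamily $\{B(x_j,\rho(x_j)/(10D))\}_j$ whose fivefold dilates still cover $U$. I then put $r_j:=\rho(x_j)/(2D)$, so that the disjoint Vitali balls are precisely $B(x_j,r_j/5)$ and $U\subset\bigcup_jB_{r_j}(x_j)$. For the reverse inclusion in (i): if $d(x_j,y)<r_j\le\rho(x_j)$ then $y\notin K\setminus U$, whence $B_{r_j}(x_j)\subset U$ and $U=\bigcup_jB_{r_j}(x_j)$. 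Property (iii) is then immediate, since $3Dr_j=\tfrac32\rho(x_j)>\rho(x_j)=d(x_j,z_{x_j})$ gives $z_{x_j}\in B_{3Dr_j}(x_j)\cap(K\setminus U)$.

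The main obstacle is the bounded overlap (ii), which I would obtain by a packing argument coupling the Lipschitz control of $\rho$ with the doubling property. Fix $w\in K$ and let $J_w:=\{j:\ w\in B_{Dr_j}(x_j)\}$, noting $B_{Dr_j}(x_j)=B(x_j,\rho(x_j)/2)$; any such $w$ satisfies $d(x_j,w)<\rho(x_j)/2<\rho(x_j)$, so $w\in U$ and $\rho(w)>0$. For $j\in J_w$ the bound $|\rho(w)-\rho(x_j)|\le d(x_j,w)<\rho(x_j)/2$ forces $\tfrac23\rho(w)<\rho(x_j)<2\rho(w)$; hence the radii of the balls through $w$ are mutually comparable, each centre $x_j$ lies in $B(w,\rho(w))$, and each core $B(x_j,\rho(x_j)/(10D))$ lies in $B(w,2\rho(w))$.

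Finally I would turn this comparability into a numerical bound through $\mu$. Disjointness of the cores gives $\sum_{j\in J_w}\mu\bigl(B(x_j,\rho(x_j)/(10D))\bigr)\le\mu\bigl(B(w,2\rho(w))\bigr)$. Conversely, from $\rho(x_j)/(10D)\ge\rho(w)/(15D)$ and $B(w,2\rho(w))\subset B(x_j,3\rho(w))$, iterating the doubling inequality $n:=\lceil\log_2(45D)\rceil$ times yields $\mu\bigl(B(w,2\rho(w))\bigr)\le\mu\bigl(B(x_j,3\rho(w))\bigr)\le D^{\,n}\mu\bigl(B(x_j,\rho(w)/(15D))\bigr)\le D^{\,n}\mu\bigl(B(x_j,\rho(x_j)/(10D))\bigr)$. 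Because $w\in K=\supp\mu$ makes $\mu(B(w,2\rho(w)))$ strictly positive, combining the two estimates and cancelling gives $\#J_w\le D^{\,n}=:M$, a constant depending only on $D$, which is exactly (ii). The only fractal input here is the doubling of $\mu$ furnished by Olsen; otherwise this is the standard Whitney/Coifman--Weiss scheme, and the constants $10D,\,2D,\,3D$ are tuned so that (i) and (iii) hold with strict room while keeping the overlapping dilates comparable.
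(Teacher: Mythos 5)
Your proof is correct, but it is worth noting that the paper does not actually prove this lemma at all: it simply recalls it as \cite[Theorem 3.2]{cw77}, the Whitney-type covering lemma of Coifman and Weiss, after observing (via Olsen's result) that $(K,d,\mu)$ is a space of homogeneous type. What you have written is a complete, self-contained proof of that cited result, and it is essentially the standard Whitney construction in a doubling metric measure space: distance-to-complement function $\rho$, a $5r$-covering (Vitali) selection among the balls $B(x,\rho(x)/(10D))$, the inclusion $B_{r_j}(x_j)\subset U$ from $r_j\le\rho(x_j)$, property (iii) from $3Dr_j=\tfrac32\rho(x_j)$, and bounded overlap via comparability of radii on overlapping dilates plus a doubling/packing cancellation. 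All the steps check out: the comparability $\tfrac23\rho(w)<\rho(x_j)<2\rho(w)$ on $J_w$ is right, the cores sit disjointly inside $B(w,2\rho(w))$, and the iteration count $n=\lceil\log_2(45D)\rceil$ is consistent with $B(w,2\rho(w))\subset B(x_j,3\rho(w))$ and $\rho(w)/(15D)\le \rho(x_j)/(10D)$. Two ingredients you use implicitly deserve a word each: the cancellation $\#J_w\le D^n$ requires $\mu(B(w,2\rho(w)))>0$, i.e.\ $\supp\mu=K$, which holds under the paper's standing assumptions when all $p_i>0$; and several inequalities (e.g.\ $r_j\le\rho(x_j)$, cores inside $B(w,2\rho(w))$) silently use $D\ge1$, which is automatic for a doubling constant. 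As for what each route buys: the paper's citation is economical and inherits the Coifman--Weiss constants wholesale, whereas your argument makes the mechanism and the explicit bound $M=D^{\lceil\log_2(45D)\rceil}$ visible and shows the lemma needs nothing beyond doubling and the compactness of $K$ (to guarantee $K\setminus U$ nonempty compact, $\rho$ attained, and radii uniformly bounded for the $5r$ lemma).
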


We now prove Proposition \ref{t16}.

\begin{proof}[Proof of Proposition \ref{t16}]
Let $f\in L^1(K,\mu)$ and $M_{\mathcal{D}}^\mu f\in L^1(K,\mu)$.
For any $\alpha\in(0,\infty)$, let $U_{\alpha}:=\{x\in K:\ M_{\mathcal{D}}^\mu f(x)>\alpha\}$. Then, obviously, the set $U_{\alpha}$ is an open bounded set.

We claim that for any $\alpha\in[1,\infty)$,
\begin{align}\label{25e1}
\frac1 \alpha\int_{\{x\in K:\ |f(x)|>\alpha\}}|f(y)|\,d\mu(y)
\le C
\mu(U_{\alpha}).
\end{align}
Since $f\in L^1(K,\mu)$, suppose $\|f\|_{L^1(K,\mu)}>0$.
If $U_{\alpha}=K$, then
\begin{align*}
\alpha \mu(U_{\alpha})&\geq\mu(K)
=\frac{\mu(K)}{\|f\|_{L^1(K,\mu)}}
\int_K|f(y)|d\mu(y)\geq \frac{\mu(K)}{\|f\|_{L^1(K,\mu)}} \int_{\{x\in K:\ |f(x)|>\alpha\}}|f(y)|\,d\mu(y).
\end{align*}
This implies \eqref{25e1}.
If $U_{\alpha}\neq K$, then from Lemma \ref{2l2},
we infer that there exists a sequence of balls
$\{B_{r_j}(x_j)\}_{j}$ satisfying
\begin{enumerate}[(1)]
\item [\textup{(a)}] $U_{\alpha}=\cup_j B_{r_j}(x_j)$;
\item [\textup{(b)}] there exists a constant $M$ such that $\sum_{j}\chi_{B_{Dr_j}(x_j)}\le M$;
\item [\textup{(c)}] for any $j$,
$B_{3Dr_j}(x_j)\cap (K\setminus U_{\alpha})\neq \emptyset.$
\end{enumerate}
By above condition (c), we find that there exists
$y\in B_{3Dr_j}(x_j)$ such that $M_{\mathcal{D}}^\mu f(y)\leq \alpha.$ Therefore, we have
\begin{align}\label{25e2}
\frac1{\mu(B_{3Dr_j}(x_j))}\int_{B_{3Dr_j}(x_j)}|f(y)|d\mu(y)\leq \alpha,
\end{align}
which, together with above conditions (a) and (b), further implies that
\begin{align*}
\mu(U_{\alpha})&=\mu\left(\bigcup_j B_{r_j}(x_j)\right)
\gtrsim\frac1 M\sum_{j}\mu(B_{Dr_j}(x_j)) \gtrsim \sum_{j}
\frac1{\alpha}\int_{B_{3Dr_j}(x_j)}|f(y)|d\mu(y)\quad \text{(by \eqref{25e2})}\nonumber\\
&\gtrsim \frac1{\alpha}\sum_{j}
\int_{B_{r_j}(x_j)}|f(y)|d\mu(y)
\sim \frac1{\alpha}
\int_{\cup_j B_{r_j}(x_j)}|f(y)|d\mu(y)\nonumber\\
&\gtrsim \frac1{\alpha}\int_{\{x\in K:\ |f(x)|>\alpha\}}|f(y)|\,d\mu(y).\quad \text{(by Corollary \ref{Ac1}})
\end{align*}
Here for simplicity, we  use the notation $X \lesssim Y$ (or $X \gtrsim Y$) to denote $X\le C Y$ (or $X\ge C Y$) for some constant $C>0$. We also write $X \sim Y$ if $ X \lesssim Y$ and $X \gtrsim Y$.
Therefore, the inequality \eqref{25e1} holds and we proved the claim.
Thus, we conclude that
\begin{align*}
\int_KM_{\mathcal{D}}^\mu f(x)\,d\mu(x)
&=\int_0^{\infty}\mu(U_{\alpha})\,d\alpha\geq\int_1^{\infty}\mu(U_{\alpha})\,d\alpha
\gtrsim\int_1^{\infty}\frac1{\alpha}\int_{\{x\in K:\ |f(x)|>\alpha\}}|f(y)|\,d\mu(y)\,d\alpha\\
&\gtrsim\int_K|f(y)|\int_1^{\max\{1,|f(x)|\}}
\frac1{\alpha}\,d\alpha\,d\mu(y)
\sim\int_K|f(y)|\log^+|f(y)| d\mu(y).
\end{align*}
Hence $|f|\log^+|f|\in L^1(K,\mu)$ and finishes the proof of
Proposition \ref{t16}.
\end{proof}


\subsection*{Acknowledgements}
Long Huang would like to thank Prof. Liguang Liu for some helpful discussion on
Proposition \ref{uppersc}, and Jinjun Li would like to thank Prof. Hui Rao and Prof. Hua Qiu for some helpful discussion on the properties of p.c.f. self-similar sets.
The project was supported by the National Natural Science Foundations of China (12201139, 12171107, 12271534, 12471119) and the Science and Technology Projects of Guangzhou (SL2024A04J00209).



\medskip

\noindent Long Huang, Jinjun Li and Xiaofeng Wang

\medskip

\noindent School of Mathematics and Information Science,
Guangzhou University, Guangzhou, 510006, The People's Republic of China

\smallskip

\noindent {\it E-mails}: \texttt{longhuang@gzhu.edu.cn}

\noindent\phantom{{\it E-mails:}}
\texttt{lijinjun@gzhu.edu.cn}

\noindent\phantom{{\it E-mails:}}
\texttt{wxf@gzhu.edu.cn}

\end{document}